\documentclass[11pt,leqno]{article}

\usepackage{amsfonts,amsmath,amssymb,amsthm}
\usepackage{fullpage}
\usepackage{dsfont}
\usepackage{hyperref}
\hypersetup{hidelinks}

\newtheorem{theorem}{Theorem}[section]
\newtheorem{proposition}[theorem]{Proposition}
\newtheorem{lemma}[theorem]{Lemma}
\newtheorem{remark}[theorem]{Remark}
\newtheorem{definition}[theorem]{Definition}
\newtheorem{corollary}[theorem]{Corollary}
\numberwithin{equation}{section}

\newcommand{\E}{\mathbb{E}}

\usepackage{dsfont}
\newcommand{\vol}{\operatorname{vol}}
\newcommand{\Var}{\operatorname{Var}}

\usepackage{setspace}
\begin{document}
\small

\title{\bf Cram\'er transform, half-space depth and threshold phenomena for convex bodies}

\author{Minas Pafis}

\date{}

\maketitle

\begin{abstract}\footnotesize
We study the relationship between the Cram\'er transform and Tukey's half-space depth for log-concave probability measures. For the uniform probability measure $\mu_K$ on a convex body $K\subseteq\mathbb{R}^n$, we prove the sharp pointwise comparison
$$\Lambda_K^*(x)\leq -\log q_K(x)\leq \Lambda_K^*(x)+\frac12\log n+C,\qquad x\in\operatorname{int}(K),$$
where $C$ is an absolute constant. The order $\log n$ is optimal, as shown by the Euclidean ball. The proof combines exponential tilting, one-dimensional log-concavity, and self-concordance of the Cram\'er transform.

As consequences, we obtain sharp-order moment and tail estimates for $\Lambda_K^*$ and identify $\exp(\Lambda_K^*(x))$, up to polynomial factors in the dimension, with the number of independent samples needed for $x$ to be captured by their random convex hull. We also establish an $O(n^2)$ variance bound for the logarithmic half-space depth and use it to derive a general criterion for sharp thresholds of random convex hulls. In particular, this criterion applies to the uniform measures on $\ell_p$-balls for every $p>1$.

These results establish a quantitative link between large-deviation cost, geometric depth, and sampling complexity in high-dimensional convex geometry.
\end{abstract}

\section{Introduction}\label{sec:introduction}

Let $\mu$ be a log-concave probability measure on $\mathbb{R}^n$ and let
\[
\Lambda_\mu(\theta)
=
\log\int_{\mathbb{R}^n}e^{\langle\theta,x\rangle}\,d\mu(x),
\qquad \theta\in\mathbb{R}^n,
\]
denote its logarithmic Laplace transform. Its Legendre transform
\[
\Lambda_\mu^*(x)
=
\sup_{\theta\in\mathbb{R}^n}
\left\{
\langle\theta,x\rangle-\Lambda_\mu(\theta)
\right\}
\]
is the Cram\'er transform of $\mu$. It is a fundamental object in
large deviations and also plays an important role in high-dimensional
convex geometry, where it is related to exponential families,
entropic barriers, and concentration phenomena.

A different notion of centrality is provided by Tukey's half-space
depth. For a probability measure $\mu$ on $\mathbb{R}^n$ and
$x\in\mathbb{R}^n$, define
\[
q_\mu(x)
=
\inf\left\{
\mu(H): H\subseteq\mathbb{R}^n
\text{ is a closed half-space containing }x
\right\}.
\]
We refer the reader to the survey article of Nagy, Sch\"{u}tt and Werner
\cite{Nagy-Schutt-Werner-2019} for an extensive and comprehensive survey on Tukey's half-space depth, with an emphasis on its connections
with convex geometry, and many references. 

Thus $q_\mu(x)$ measures the geometric depth of $x$, whereas
$\Lambda_\mu^*(x)$ measures the exponential cost associated with
reaching $x$. The purpose of this paper is to investigate the
relationship between these two notions of outlyingness and to explore
its consequences for random convex hulls.

The main result concerns the uniform probability measure on a convex
body. Let $K\subseteq\mathbb{R}^n$ be a convex body, let $\mu_K$ denote
the uniform probability measure on $K$, and write
\[
\Lambda_K^*=\Lambda_{\mu_K}^*,
\qquad
q_K=q_{\mu_K}.
\]
We prove the following sharp comparison.

\begin{theorem}\label{thm:intro-main-comparison}
There exists an absolute constant $C>0$ such that, for every convex
body $K\subseteq\mathbb{R}^n$ and every $x\in\operatorname{int}(K)$,
\[
q_K(x)
\geq
\frac{C}{\sqrt n}\,e^{-\Lambda_K^*(x)}.
\]
Equivalently,
\[
\Lambda_K^*(x)
\leq
-\log q_K(x)
\leq
\Lambda_K^*(x)+\frac12\log n+C.
\]
\end{theorem}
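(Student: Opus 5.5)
The lower bound $\Lambda_K^*(x)\le -\log q_K(x)$ is a Chernoff estimate and holds for any probability measure. Take a closed half-space $H=\{y:\langle\theta,y\rangle\ge\langle\theta,x\rangle\}$ containing $x$. Then
\[
\mu_K(H)\le e^{-\langle\theta,x\rangle+\Lambda_K(\theta)}\quad\text{for every }\theta,
\]
and also for every positive multiple $t\theta$. Optimizing over $t>0$ gives $\mu_K(H)\ge e^{-\Lambda_K^*(x)}$ only in the reverse direction, so we argue instead as follows. For each $\theta$ there is a half-space through $x$ whose measure is small. Concretely, $q_K(x)\le\mu_K(H_\theta)\le \exp(\Lambda_K(\theta)-\langle\theta,x\rangle)$ for every $\theta$, where $H_\theta=\{\langle\theta,y\rangle\ge\langle\theta,x\rangle\}$. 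Taking the infimum over $\theta$ gives $q_K(x)\le e^{-\Lambda_K^*(x)}$.

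The substance is the upper bound $q_K(x)\ge c\,n^{-1/2}e^{-\Lambda_K^*(x)}$. I would use exponential tilting. For $x\in\operatorname{int}(K)$, let $\theta=\theta(x)$ be the unique point with $\nabla\Lambda_K(\theta)=x$, and let $\mu_\theta$ be the tilted measure with density proportional to $e^{\langle\theta,y\rangle}\mathbf 1_K(y)$. This is log-concave with barycenter $x$. Fix any closed half-space $H=\{\langle u,y\rangle\ge\langle u,x\rangle\}$ with $|u|=1$. Then
\[
\mu_K(H)=e^{\Lambda_K(\theta)}\int_H e^{-\langle\theta,y\rangle}\,d\mu_\theta(y)=e^{-\Lambda_K^*(x)}\int_H e^{-\langle\theta,y-x\rangle}\,d\mu_\theta(y).
\]
It therefore suffices to show that
\[
\int_H e^{-\langle\theta,y-x\rangle}\,d\mu_\theta(y)\ge c/\sqrt n
\]
uniformly in $u$. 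The worst case is when $\theta$ points into $H$, so that the weight $e^{-\langle\theta,y-x\rangle}$ penalizes the region $H$.

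To get this bound, I would restrict to the region $\{0\le\langle u,y-x\rangle\le s\}$ and also control $\langle\theta,y-x\rangle$ there. Decompose $\theta=au+w$ with $w\perp u$. The term $\langle w,y-x\rangle$ is a centered quantity under $\mu_\theta$, and its conditional behaviour can be handled with Jensen applied to the conditional measure on slabs. The cost of the $a\langle u,y-x\rangle$ part is governed by the one-dimensional marginal of $\mu_\theta$ in direction $u$. That marginal is log-concave, centered at $0$, with standard deviation $\sigma_u=\langle \nabla^2\Lambda_K(\theta)u,u\rangle^{1/2}$, and its density near $0$ is of order $1/\sigma_u$ by one-dimensional log-concavity. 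Hence
\[
\int_H e^{-\langle\theta,y-x\rangle}\,d\mu_\theta\gtrsim \min(1,\,1/(|\theta_u|\sigma_u)),
\]
modulo handling the $w$ part. The key input is therefore a bound $|\langle\theta,u\rangle|\,\sigma_u\lesssim\sqrt n$, or more precisely $\langle\theta,\nabla^2\Lambda_K(\theta)^{-1}\theta\rangle\lesssim n$ combined with Cauchy–Schwarz. This is where self-concordance enters: $\Lambda_K^*$ is an $O(n)$-self-concordant barrier on $K$ (the entropic barrier of Bubeck–Eldan). Since $\nabla\Lambda_K^*(x)=\theta$ and $\nabla^2\Lambda_K^*(x)=\nabla^2\Lambda_K(\theta)^{-1}$, the barrier parameter bound gives
\[
\langle\theta,\nabla^2\Lambda_K(\theta)^{-1}\theta\rangle\le \nu=O(n),
\]
which yields exactly the $\sqrt n$.

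The main obstacle is handling the $w$-component, i.e. making the reduction to one dimension rigorous when $\theta$ is not parallel to $u$. My plan is to tilt only along the direction that matters. Condition on the slab variable $t=\langle u,y-x\rangle$ and apply Jensen to $e^{-\langle w,y-x\rangle}$ under the conditional law. This requires bounding the conditional means $\E[\langle w,y-x\rangle\mid t]$, which are linear-ish in $t$ for log-concave measures only approximately. Alternatively, I would first reduce to the case $u\parallel\theta$ by a convexity argument: the minimizing half-space for the weighted integral should be the one normal to the projection of $\theta$ onto the $\nabla^2\Lambda_K$-geometry, namely $u\propto\theta$ itself. In that case the bound is purely one-dimensional and gives the constant $c/\sqrt n$ with an absolute $c$. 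I expect this reduction step to need the most care.
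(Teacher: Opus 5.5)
Your Chernoff argument for $q_K(x)\le e^{-\Lambda_K^*(x)}$ is correct; the sentence before it is garbled, but the final inequality is right. The ingredients you name for the hard direction (tilting, the bound $g\ge c_0/\sigma$ on $[0,c_1\sigma]$ for a centered log-concave density, and the $O(n)$ barrier parameter) are also the right ones. The gap is the reduction to one dimension, which is where the proof lives, and your route does not achieve it.

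First, the parameter inequality is misstated. Since $\nabla\Lambda_K^*(x)=\theta$ and $\nabla^2\Lambda_K^*(x)=\nabla^2\Lambda_K(\theta)^{-1}$, it reads $\langle\theta,\nabla^2\Lambda_K(\theta)\,\theta\rangle=\Var_{\mu_\theta}(\langle X,\theta\rangle)\le\nu$, with no inverse. With this correction your argument is complete when $u\parallel\theta$, since then $\langle\theta,y-x\rangle$ is itself the centered one-dimensional variable.

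For other $u$, however, no version of the inequality gives your key input $|\langle\theta,u\rangle|\sigma_u\lesssim\sqrt n$. That quantity is not affinely invariant and is in fact unbounded. Take $K=[-L,L]\times[-1,1]$, $x=(0,\tfrac12)$, $u=(e_1+e_2)/\sqrt2$. Then $\theta=(0,\theta_2)$ with $\theta_2$ independent of $L$, while $y_1$ is uniform on $[-L,L]$ under $\mu_\theta$, so $|\langle\theta,u\rangle|\sigma_u\asymp L$. In this example $\langle\theta,y-x\rangle=\theta_2(y_2-\tfrac12)$ is of order one, but your Euclidean split $\theta=au+w$ writes it as a difference of two terms of order $L$. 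So the $w$-part is not a perturbation that Jensen on slabs can absorb.

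Your fallback, that the worst half-space is normal to $\theta$, is unproven and not exactly true in general. First-order optimality of $u\mapsto\mu_K(H_u(x))$ forces $x$ to be the centroid of $K\cap\partial H_u(x)$. The condition $u\parallel\theta(x)$ means instead that $x=\nabla\Lambda_K(tu)$, the barycenter of $\mu_{tu}$, which is a different condition.

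The missing idea is to move the point instead of the direction: for each fixed $u$, tilt along $u$ itself rather than along $\theta(x)$. Since $\mu_K(H)$ depends only on $Y=\langle X,u\rangle$, set $h=\langle x,u\rangle$. If $h\le\E Y$, then $H$ contains the barycenter and Gr\"unbaum gives $\mu_K(H)\ge 1/e$. Otherwise there is $t>0$ with $\Lambda_Y'(t)=h$, and
\[
\mu_K(H)=e^{-\Lambda_Y^*(h)}\,\E_{\mu_{tu}}\bigl[e^{-Z}\mathbf{1}_{\{Z\ge0\}}\bigr],\qquad Z=t(Y-h),
\]
with $Z$ centered and log-concave under $\mu_{tu}$.

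Its variance is $t^2\Lambda_Y''(t)=\langle tu,\nabla^2\Lambda_K(tu)\,tu\rangle\le n$. This is the barrier-parameter inequality at $x'=\nabla\Lambda_K(tu)$, a point on the same hyperplane $\{\langle y,u\rangle=h\}$ at which $\nabla\Lambda_K^*(x')=tu$ is parallel to $u$. The paper proves this bound directly in Theorem~\ref{thm:s-concave-self-concordance}, via log-concavity of $t\mapsto t^{1/s}I(t)$.

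Finally, $\Lambda_Y^*(h)=\sup_{\lambda\in\mathbb{R}}\{\lambda h-\Lambda_K(\lambda u)\}\le\Lambda_K^*(x)$, because this is the supremum defining $\Lambda_K^*(x)$ restricted to $\theta\in\mathbb{R}u$. Equivalently, $x'$ minimizes $\Lambda_K^*$ on that hyperplane. The one-dimensional density bound then gives $\E[e^{-Z}\mathbf{1}_{\{Z\ge0\}}]\ge c/(1+\sqrt n)$, and no $w$-component ever appears.

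This is the paper's proof. Its sentence ``$\nabla\Lambda_\mu(tu)=x$'' is not literally correct and should be read as the inequality $\Lambda_Y^*(h)\le\Lambda_\mu^*(x)$. Your intuition that $u\parallel\theta$ is the worst case is exactly this inequality at the exponential scale, not a statement about which half-space is minimal.
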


The reverse inequality
\[
q_K(x)\leq e^{-\Lambda_K^*(x)}
\]
is elementary and holds for every $x\in\operatorname{int}(K)$.
Thus the substantive part of Theorem~\ref{thm:intro-main-comparison}
is the lower bound on $q_K$. It improves the previously known
$O(\sqrt n)$ error in the logarithmic comparison to the optimal
$O(\log n)$ scale. This order cannot, in general, be improved: for
every fixed $r\in(0,1)$,
\[
q_{B_2^n}(re_1)
\asymp_r
\frac{1}{\sqrt n}e^{-\Lambda_{B_2^n}^*(re_1)},
\]
and consequently
\[
-\log q_{B_2^n}(re_1)
=
\Lambda_{B_2^n}^*(re_1)
+\frac12\log n+O_r(1).
\]

The proof of Theorem~\ref{thm:intro-main-comparison} combines
exponential tilting with a one-dimensional reduction and the
self-concordance structure of the Cram\'er transform. The
key estimate is the optimal bound
\[
t^2\Lambda_Y''(t)\leq n
\]
that comes from the entropic barrier, together with a local
lower bound for centered one-dimensional log-concave densities. This
produces the single factor $n^{-1/2}$ in the comparison.

The bounded-support setting allows for a substantially sharper dimensional statement than in the general log-concave setting. Brazitikos and Chasapis \cite{Brazitikos-Chasapis-2024}
proved that for every log-concave probability measure $\mu$ and every
$\varepsilon\in(0,1)$,
\begin{equation}\label{eq:dimension-free}
\Lambda_\mu^*(x)
\geq
(1-\varepsilon)\log\frac{1}{q_\mu(x)}
+
\log\frac{\varepsilon}{2^{1-\varepsilon}}.
\end{equation}
Their result applies to arbitrary log-concave measures and is dimension-free, whereas
Theorem~\ref{thm:intro-main-comparison} exploits the geometry of convex
bodies to obtain an additive $O(\log n)$ comparison.

\medskip

The comparison has a direct interpretation in terms of random convex
hulls. Let $X_1,X_2,\ldots$ be independent random vectors with law
$\mu$ and set
\[
K_N=\operatorname{conv}\{X_1,\ldots,X_N\}.
\]
For $x\in\mathbb{R}^n$, define the median sampling threshold
\[
N_\mu(x)
=
\inf\left\{
N\in\mathbb{N}:
\mathbb{P}(x\in K_N)\geq\frac12
\right\}.
\]
A theorem of Hayakawa, Lyons and Oberhauser \cite{HLO} gives
\[
\frac12\leq N_\mu(x)q_\mu(x)\leq 3n+1.
\]
Consequently, Theorem~\ref{thm:intro-main-comparison} yields the
following.

\begin{theorem}\label{thm:intro-sampling}
There exists an absolute constant $C>0$ such that, for every convex
body $K\subseteq\mathbb{R}^n$ and every $x\in\operatorname{int}(K)$,
\[
\frac12e^{\Lambda_K^*(x)}
\leq
N_K(x)
\leq
Cn^{3/2}e^{\Lambda_K^*(x)}.
\]
\end{theorem}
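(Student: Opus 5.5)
This follows by chaining the Hayakawa--Lyons--Oberhauser bound with Theorem~\ref{thm:intro-main-comparison} and the elementary inequality $q_K(x)\leq e^{-\Lambda_K^*(x)}$. Throughout, write $N_K=N_{\mu_K}$ and fix $x\in\operatorname{int}(K)$. In the interior we have $q_K(x)>0$, so the quantities below are finite and we may divide by $q_K(x)$.

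\emph{Lower bound.} From $\frac12\leq N_K(x)q_K(x)$ we get $N_K(x)\geq \frac{1}{2q_K(x)}$. The elementary reverse inequality gives $1/q_K(x)\geq e^{\Lambda_K^*(x)}$. Together these give $N_K(x)\geq\frac12e^{\Lambda_K^*(x)}$. The elementary inequality itself is the exponential Chebyshev bound. For $\theta\neq0$, the half-space $H_\theta=\{y:\langle\theta,y\rangle\geq\langle\theta,x\rangle\}$ contains $x$ and satisfies
\[
\mu_K(H_\theta)\leq e^{-\langle\theta,x\rangle+\Lambda_K(\theta)} .
\]
Taking the infimum over $\theta$ gives $q_K(x)\leq e^{-\Lambda_K^*(x)}$. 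The case $\theta=0$ is harmless, since it contributes only the trivial bound $1$.

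\emph{Upper bound.} From $N_K(x)q_K(x)\leq 3n+1$ and Theorem~\ref{thm:intro-main-comparison}, i.e.\ $q_K(x)\geq \frac{C_0}{\sqrt n}e^{-\Lambda_K^*(x)}$, we obtain
\[
N_K(x)\leq \frac{3n+1}{q_K(x)}\leq \frac{(3n+1)\sqrt n}{C_0}\,e^{\Lambda_K^*(x)}\leq \frac{4}{C_0}\,n^{3/2}e^{\Lambda_K^*(x)},
\]
using $3n+1\leq 4n$. We can therefore take $C=4/C_0$.

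There is no real obstacle once Theorem~\ref{thm:intro-main-comparison} and the HLO inequality are granted. The only points to check are that the HLO result applies to $\mu_K$, which is log-concave and in particular absolutely continuous, at interior points, and that $N_K(x)$ is finite there. Both follow from $q_K(x)>0$ and the HLO upper bound.
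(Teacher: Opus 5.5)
Your proposal is correct and is essentially the paper's argument: it combines the Hayakawa--Lyons--Oberhauser bound $\frac12\le N_K(x)q_K(x)\le 3n+1$ with the two-sided comparison $e^{\Lambda_K^*(x)}\le 1/q_K(x)\le \frac{\sqrt n}{C}\,e^{\Lambda_K^*(x)}$, which comes from Theorem~\ref{thm:intro-main-comparison}. Your explicit exponential Chebyshev derivation of $q_K(x)\le e^{-\Lambda_K^*(x)}$ fills in the step the paper only calls elementary.
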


In particular, the Cram\'er transform describes the logarithmic
sampling complexity of capturing a point by a random convex hull:
up to polynomial factors in the dimension,
\[
N_K(x)\asymp e^{\Lambda_K^*(x)}.
\]
This gives a probabilistic interpretation of $\Lambda_K^*$ that will
be useful throughout the paper.

The Cram\'er transform can also be large well inside a convex body.
For $\eta\in(0,1)$, define
\[
M_\eta(K)
=
\max_{x\in\eta K}\Lambda_K^*(x).
\]
We prove the following.

\begin{theorem}\label{thm:intro-interior}
There exist absolute constants $c,C>0$ such that, for every centered
convex body $K\subseteq\mathbb{R}^n$ and every $\eta\in(0,1)$,
\[
c\eta^2n
\leq
M_\eta(K)
\leq
n\log\frac{1}{1-\eta}.
\]
\end{theorem}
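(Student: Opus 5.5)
The upper bound will come from the pointwise comparison $\Lambda_K^*(x)\le -\log q_K(x)$ (the elementary direction of Theorem~\ref{thm:intro-main-comparison}) combined with a Grünbaum-type estimate for half-spaces that cut off a neighbourhood of $\eta K$. Fix $x\in\eta K$ and a closed half-space $H=\{y:\langle\theta,y\rangle\ge\langle\theta,x\rangle\}$ containing $x$; we want $\mu_K(H)\ge(1-\eta)^n$. Write $x=\eta z$ with $z\in K$. Since $K$ is convex and $z\in K$, the homothetic copy $(1-\eta)K+\eta z=(1-\eta)K+x$ is contained in $K$. Because $K$ is centered, $0\in K$, and then every half-space containing $x$ on its boundary contains a ``large'' part of this copy. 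The cleanest route is to use the measure $\mu$ of $(1-\eta)(K\cap H_0)+x$, where $H_0=\{\langle\theta,y\rangle\ge 0\}$. This set lies in $H\cap K$ and has volume $(1-\eta)^n|K\cap H_0|$. So $q_K(x)\ge(1-\eta)^n\inf_{H_0}\mu_K(H_0)$. Using the centering, Grünbaum gives $\mu_K(H_0)\ge 1/e$. Hence
\[
\Lambda_K^*(x)\le -\log q_K(x)\le n\log\frac1{1-\eta}+1 .
\]
To remove the additive $1$, I would instead apply the same homothety argument directly to the Laplace-transform definition. For any $\theta$, the inclusion $(1-\eta)K+x\subseteq K$ gives
\[
\int_K e^{\langle\theta,y\rangle}dy\ge(1-\eta)^n e^{\langle\theta,x\rangle}\int_K e^{(1-\eta)\langle\theta,u\rangle}du,
\]
and Jensen with centering gives $\int_K e^{(1-\eta)\langle\theta,u\rangle}du\ge|K|$. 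Thus $\Lambda_K(\theta)\ge\langle\theta,x\rangle+n\log(1-\eta)$ for all $\theta$, which is exactly $\Lambda_K^*(x)\le n\log\frac1{1-\eta}$. This is clean and sharp, so I will use this version.

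For the lower bound I need a point $x\in\eta K$ with $\Lambda_K^*(x)\ge c\eta^2 n$. Equivalently, I need $\theta$ with $\langle\theta,x\rangle-\Lambda_K(\theta)\gtrsim\eta^2 n$. My plan is to choose a direction and exploit one-dimensional marginals. Take $x=\eta z$ with $z$ on the boundary of $K$ chosen in direction $v$ where $h_K(v)$ is large relative to the marginal spread. Concretely, I would work in a position where $K$ is isotropic; the quantities are affine invariant, so this is harmless.

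Pick $\theta=tv$. Then $\Lambda_K(tv)$ is the log-Laplace transform of the marginal $\langle v,X\rangle$, which is a centered log-concave variable supported on $[-h_K(-v),h_K(v)]$. The Cramér transform of $x$ is at least the one-dimensional Cramér transform of this marginal evaluated at $\eta h_K(v)$. So it suffices to find a direction $v$ for which the marginal has a one-dimensional Cramér cost $\gtrsim\eta^2 n$ at $\eta h_K(v)$.

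The key tool is the self-concordance/entropic barrier bound quoted in the introduction, $t^2\Lambda_Y''(t)\le n$. Alternatively, I can use the fact that the marginal density behaves like $(1-s/h)^{n-1}$ near the endpoint only after tilting. The natural way is to look at the convex function $\Lambda_K^*$ along the segment and use $\Lambda_K^*(z)$ near the boundary. The obstacle is that in a bad direction the marginal is nearly uniform and $\Lambda^*$ at $\eta h$ is $O(\eta^2)$. I therefore choose $v$ to maximize $h_K(v)/\sigma_v$. By isotropy $\sigma_v=L_K$-scaled constant. Also $\max_v h_K(v)\ge c\sqrt n$ in the isotropic normalization, since $K\supseteq$ and $\int|x|^2=n$ forces $\operatorname{diam}\gtrsim\sqrt n$. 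Then a Gaussian-type lower bound on the one-dimensional Cramér transform, $\Lambda_Y^*(s)\ge c\,s^2/\Var(Y)$ for $|s|\le c'h$, follows from $\Lambda_Y(t)\le$ the bound from log-concavity of $Y$ in the relevant range. This gives $\Lambda^*\ge c\eta^2 n$. The main obstacle is justifying the quadratic lower bound on $\Lambda_Y^*$ up to a constant fraction of the support endpoint. A subgaussian bound fails for log-concave $Y$, which is only subexponential. So I would instead use the $t^2\Lambda_Y''\le n$ estimate: integrating it controls the growth of $\Lambda_Y'$ and hence lower-bounds $\Lambda_Y^*$ at $\eta h$ by $c\eta^2 n$ directly.
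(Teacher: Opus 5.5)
Your upper bound is correct and is exactly the paper's argument: the inclusion $x+(1-\eta)K\subseteq K$, the substitution $y=x+(1-\eta)z$, and Jensen's inequality using the centering. The Gr\"unbaum detour is unnecessary.

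The lower bound has a genuine gap. Passing to marginals loses nothing: with $X\sim\mu_K$ one has exactly $M_\eta=\sup_{v\in S^{n-1}}\Lambda^*_{\langle X,v\rangle}(\eta h_K(v))$. The problem is the one-dimensional estimate you rely on, $\Lambda_Y^*(s)\geq cs^2/\Var(Y)$ for $|s|\leq c'h$, which is false even for marginals of convex bodies.
\begin{itemize}
\item Let $Y$ have density proportional to $(1-\frac{y+1}{n})^{n-1}$ on $[-1,\sqrt n]$. This is the axial marginal of a truncated cone, so it is even $\frac{1}{n-1}$-concave. It is essentially a centred exponential with variance $\approx1$, and $\mathbb{P}(Y\geq s)\geq ce^{-s}$ for $0\leq s\leq\sqrt n/2$. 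By Chernoff, $\Lambda_Y^*(c'h)\leq c'\sqrt n+C$, not $\gtrsim c'^2n$.
\item The bound $t^2\Lambda_Y''(t)\leq n$ does not rescue this. It is an \emph{upper} bound on the tilted variance, which is why the paper uses it for the lower bound on $q_K$. Integrating it gives only $\Lambda_Y'(t)\leq\Lambda_Y'(t_0)+n/t_0$ for $t\geq t_0$. With $t_0\asymp1/\sigma$ the right-hand side is of order $n\sigma$, far above $\eta h$. So all you learn is that the tilt reaching a mean $u\in[C\sigma,\eta h]$ is $\gtrsim1/\sigma$, whence $\Lambda_Y^*(\eta h)\gtrsim\eta h/\sigma$.
\item When $h\asymp\sqrt n\,\sigma$ this is only $\eta\sqrt n$, weaker than $\eta^2n$ once $\eta\gg n^{-1/2}$. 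The example shows no argument using only such one-dimensional information can do better.
\end{itemize}

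Your choice of direction does not avoid such marginals either. Take the nearly isotropic frustum $K=\{(y,w)\in\mathbb{R}\times\mathbb{R}^{n-1}:-1\leq y\leq\sqrt n,\ |w|\leq\sqrt n(1-\frac{y+1}{n})\}$. The point farthest from the barycentre lies on the top rim, so your $v$ is $\approx(e_1+e_2)/\sqrt2$ (up to a rotation in $w$) with $h_K(v)\approx\sqrt{2n}$. Since $w$ is uniform on a centred ball given $y$, $\mathbb{P}(\langle X,v\rangle\geq\eta h_K(v))\gtrsim\mathbb{P}(Y\geq2\eta\sqrt n)\gtrsim e^{-2\eta\sqrt n}$ for $\eta\leq1/4$. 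So this direction yields only $O(\eta\sqrt n)$; here the good directions are orthogonal to the axis.

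Your plan does work when $\eta\sqrt n\lesssim1$. With $\operatorname{Cov}(\mu_K)=I$, self-concordance at the origin gives $\Lambda_K^*(y)\geq\omega(|y|)\gtrsim|y|^2$ for $|y|\lesssim1$. This is the paper's case $M_\eta<1$, with the trace identity $\mathbb{E}\langle\Sigma^{-1}X,X\rangle=n$ playing the role of your farthest point.

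For the main regime the paper uses a genuinely $n$-dimensional volumetric argument that never selects a direction. If $M_\eta\geq1$, then $\eta K\subseteq B_{M_\eta}(K)\subseteq2Z_{c_1M_\eta}(\mu_K)$ by Giannopoulos--Tziotziou. The bound $\vol_n(Z_p(\mu_K))^{1/n}\leq c\sqrt{p/n}\,\vol_n(K)^{1/n}$ for $p\leq n$ (using $L_K\asymp1$; the case $c_1M_\eta>n$ is trivial) then forces $\eta\lesssim\sqrt{M_\eta/n}$. Some such global argument is what your proposal is missing.
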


Thus, for every fixed $\eta\in(0,1)$, there are points in $\eta K$ whose Cram\'er transform is of order $n$. By Theorem~\ref{thm:intro-sampling}, such points require exponentially many random samples to be captured with probability bounded away from zero. The upper bound follows from
\[
x+(1-\eta)K\subseteq K,
\qquad x\in\eta K,
\]
while the lower bound reflects the geometry of Cram\'er sublevel sets and its self-concordance property.

\medskip

We next turn to the distribution of the Cram\'er transform under the
measure itself. For convex bodies we prove moment estimates of the
form
\[
\|\Lambda_K^*\|_{L^p(\mu_K)}
\lesssim n(\log n+p),
\qquad p\geq1.
\]
More generally, the same order holds for arbitrary log-concave
probability measures.

\begin{theorem}\label{thm:intro-moments-logconcave}
There exists an absolute constant $C>0$ such that, for every
log-concave probability measure $\mu$ on $\mathbb{R}^n$ and every
$p\geq1$,
\[
\|\Lambda_\mu^*\|_{L^p(\mu)}
\leq
Cn(\log n+p).
\]
Equivalently, after changing the absolute constant,
\[
\mu\left(
\left\{
x:\Lambda_\mu^*(x)>Cn(\log n+u)
\right\}
\right)
\leq e^{-u},
\qquad u\geq0.
\]
\end{theorem}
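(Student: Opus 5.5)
The plan is to reduce the moment bound to a tail estimate for the half-space depth, and then to a tail estimate for the density. By the elementary Chernoff bound recalled after Theorem~\ref{thm:intro-main-comparison}, which holds for every probability measure, $\Lambda_\mu^*(x)\leq\log\frac{1}{q_\mu(x)}$. So it suffices to prove
\[
\mu\bigl(\{x:\ q_\mu(x)<e^{-Cn(\log n+u)}\}\bigr)\leq e^{-u},\qquad u\geq0 .
\]
The $L^p$ form then follows by integrating the tail in the usual way. Both $\Lambda_\mu^*$ and $q_\mu$ are affine invariant, and so is the image measure. We may therefore assume $\mu$ is isotropic, or more simply that its density $f$ attains its maximum at $0$. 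If $\mu$ is not absolutely continuous, we first restrict to the affine hull of its support.

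The first key step is a pointwise lower bound of the form
\[
q_\mu(x)\geq \Bigl(\frac{c}{n}\Bigr)^{n}\Bigl(\frac{f(x)}{\|f\|_\infty}\Bigr)^{A}
\]
for some absolute constant $A$. To get it, I would fix a closed half-space $H\ni x$ and write $\mu(H)\geq \int_{H\cap L_s}f$, where $L_s=\{f\geq e^{-s}f(x)\}$ is convex. The difficulty is that $x$ may lie near the boundary of $L_0$, so the set $H\cap L_0$ can be thin.

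To get around this, I would use the segment from $x$ to the maximum point $z$, together with log-concavity. On the set $\frac12(x+L_0)$, log-concavity gives $f\geq f(x)$, and more generally $f\geq f(x)^{1-\lambda}\|f\|_\infty^{\lambda}$ on the homothetic copy $(1-\lambda)x+\lambda\{f\geq f(x)\}$. This copy contains a region on the $H$-side of $x$ of volume comparable to $\lambda^n$ times a Grünbaum-type fraction $e^{-1}$ of $\vol(L_0)$. One also has $\vol(L_0)\,f(x)\gtrsim$ something like $(f(x)/\|f\|_\infty)\cdot c^n$, from the standard estimate $\|f\|_\infty\,\vol(\{f\geq \|f\|_\infty e^{-n}\})\gtrsim c^n$. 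Choosing $\lambda\sim 1/n$ then produces the $(c/n)^n$ factor; this choice is where the $n\log n$ comes from.

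The second step is the known concentration of the log-density for log-concave measures:
\[
\mu\bigl(\{x:\ f(x)\leq e^{-t}\|f\|_\infty\}\bigr)\leq e^{-ct}\qquad\text{for } t\geq Cn .
\]
This follows from the fact that $-\log f$ has sub-exponential fluctuations of order $\sqrt n$ around a mean of order at most $n$, or more crudely from the comparison of the volumes of level sets under log-concavity. Combining it with step one, $\log\frac{1}{q_\mu(x)}\leq n\log\frac{n}{c}+A\log\frac{\|f\|_\infty}{f(x)}$. Taking $t=C(n+u)$ gives the required tail $Cn(\log n+u)$, with room to spare in $u$.

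The main obstacle is the pointwise lower bound on $q_\mu$ in step one. Specifically, one must control the volume of $H\cap\bigl((1-\lambda)x+\lambda L\bigr)$ uniformly over half-spaces whose boundary passes through $x$, even when $x$ sits at the edge of its own level set. If the homothety argument loses too much, I would fall back on a probabilistic route. One has $\mu(\{q_\mu<\delta\})\leq \E[1-\mu(K_N)]+N\delta$, because $\mathbb{P}(x\in K_N)\leq Nq_\mu(x)$ by a union bound over the minimizing half-space. One could then bound the expected missed mass $\E[1-\mu(K_N)]$ for $N=e^{Cn(\log n+u)}$ using known floating-body estimates for log-concave measures.
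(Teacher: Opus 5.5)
Your overall architecture matches the paper's: you reduce to a tail bound for $-\log q_\mu$ via $\Lambda_\mu^*\le-\log q_\mu$, and then use that density level sets carry almost all the mass. However, the pointwise inequality in your first step is false, and it is the heart of the argument. No bound of the form $q_\mu(x)\ge (c/n)^n(f(x)/\|f\|_\infty)^A$ can hold. For the uniform measure on $B_2^n$ (or on $[-1,1]$), $f(x)/\|f\|_\infty=1$ at every interior point, so your bound would give $q_{B_2^n}(x)\ge (c/n)^n$ uniformly, whereas $q_{B_2^n}(re_1)\to0$ as $r\to1$. Combined with $\Lambda^*\le -\log q$, it would even make $\Lambda_{B_2^n}^*$ bounded, contradicting the barrier property.

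The homothety device cannot rescue this. A half-space whose boundary contains $x$ is invariant under homotheties centered at $x$, so $\vol_n\bigl(H\cap((1-\lambda)x+\lambda L_0)\bigr)=\lambda^n\vol_n(H\cap L_0)$ exactly. This is tiny, or zero, when $x$ is near, or on, $\partial L_0$. Gr\"unbaum's bound applies only to half-spaces through the barycenter. (Also, on that copy log-concavity gives only $f\ge f(x)$, not $f\ge f(x)^{1-\lambda}\|f\|_\infty^\lambda$.)

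Your remark that you have ``room to spare in $u$'' is a symptom of the problem. A tail bound $\Lambda_\mu^*\le Cn\log n+Cu$ off a set of measure $e^{-u}$ is false for the ball, since $\|\Lambda_{B_2^n}^*\|_{L^p(\mu_{B_2^n})}\asymp n(\log n+p)$ by Proposition~\ref{prop:ball-cramer-moments}. The $nu$ term is forced by the boundary layer of the level sets, which your first step ignores.

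The missing idea is to bound the depth by how deep $x$ lies inside a fixed level set, and to pay for the boundary shell in measure. In isotropic position, set $R_t=\{f\ge e^{-t}f(0)\}$. For $t\ge20n$ one has $\mu(R_t)\ge1-e^{-t/4}$ and $cB_2^n\subseteq R_t$. If $x\in(1-\delta)R_t$, then $B(x,c\delta)\subseteq R_t$ by convexity, and every half-space through $x$ contains half of this ball. Using $f(0)\ge c_1^n$ and $\vol_n(B_2^n)\ge(c_2/\sqrt n)^n$, this gives $q_\mu(x)\ge e^{-t}(c'\delta/\sqrt n)^n$. The shell is handled by the homothety estimate $\mu(R_t)\le e^{2n\delta/(1-\delta)}\mu((1-\delta)R_t)$. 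Taking $t=20n+4u$ and $\delta=e^{-u}/(8n)$ gives $-\log q_\mu\le Cn(\log n+u)$ on a set of measure at least $1-e^{-u}$. The term $n\log(1/\delta)=n(u+\log 8n)$ is exactly where the $nu$, and part of the $n\log n$, come from.

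Your fallback inequality $\mu(\{q_\mu<\delta\})\le\E[1-\mu(K_N)]+N\delta$ is correct, but it does not close the gap. The standard lower bounds for $\E\,\mu(K_N)$ (Dyer--F\"uredi--McDiarmid type) are themselves derived from lower bounds on $\mu(\{q_\mu\ge\delta\})$. So unless you cite an independent estimate with the right dependence on $u$, this route is circular.
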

The Euclidean ball shows that both contributions are necessary. Indeed,
$$\|\Lambda_{B_2^n}^*\|_{L^p(\mu_{B_2^n})}\asymp n(\log n+p),\qquad p\geq1.$$

For convex bodies we obtain in addition the exponential-moment
estimate
\[
\mathbb{E}_{\mu_K}
\exp\left(\frac{a}{n}\Lambda_K^*\right)
\leq
2^a\Gamma(1-a)n^a,
\qquad a\in(0,1).
\]
These estimates complement
the recent work of Giannopoulos and Tziotziou
\cite{Giannopoulos-Tziotziou-2025}, who established exponential
integrability of the Cram\'er transform at scale $1/n$ for centered
log-concave probability measures and, in particular, obtained the
optimal-order estimate
\[
\|\Lambda_\mu^*\|_{L^2(\mu)}
\lesssim n\log n.
\]
Our result makes the dependence on the moment parameter explicit.

\medskip

A second ingredient in our analysis is the distribution of
half-space depth. Here the relevant structure is $s$-concavity.
Recall that a probability measure $\mu$ is $s$-concave,
$s\in(0,1/n]$, if
\[
\mu((1-\lambda)A+\lambda B)^s
\geq
(1-\lambda)\mu(A)^s+\lambda\mu(B)^s
\]
for all non-empty compact sets $A,B$ and $\lambda\in[0,1]$.
The uniform probability measure on an $n$-dimensional convex body is
$1/n$-concave.

The key observation is that, for an $s$-concave measure supported on
a convex body, the function $q_\mu^s$ is concave. An application of generalized Berwald's inequality then gives a dimension-free estimate at the
natural $s$-concavity scale.

\begin{theorem}\label{thm:intro-depth-variance}
Let $\mu$ be an $s$-concave probability measure supported on a convex
body $K$. Then
\[
\operatorname{Var}_\mu(-\log q_\mu)
\leq
\frac{\pi^2}{6s^2}.
\]
In particular, for the uniform probability measure on a convex body,
\[
\operatorname{Var}_{\mu_K}(-\log q_K)
\leq
\frac{\pi^2}{6}n^2.
\]
\end{theorem}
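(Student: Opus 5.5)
The plan is in three steps: show that $f=q_\mu^s$ is concave on $K$, apply the generalized Berwald inequality to $f$, and then read off the distribution of $-\log q_\mu=-\frac1s\log f$ and bound its variance.

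\textbf{Concavity of $q_\mu^s$.} Fix $x,y\in K$ and $\lambda\in[0,1]$, and put $z=(1-\lambda)x+\lambda y$. Take any closed half-space $H\ni z$, written $H=\{w:\langle\theta,w\rangle\ge a\}$ with $\langle\theta,z\rangle\ge a$. Since $z$ is a convex combination of $x$ and $y$, I would choose $a_x\le\langle\theta,x\rangle$ and $a_y\le\langle\theta,y\rangle$ with $(1-\lambda)a_x+\lambda a_y=a$. Set $H_x=\{\langle\theta,\cdot\rangle\ge a_x\}\ni x$ and $H_y=\{\langle\theta,\cdot\rangle\ge a_y\}\ni y$. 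Then $(1-\lambda)(H_x\cap K)+\lambda(H_y\cap K)\subseteq H$. Applying $s$-concavity to these compact sets (with the usual convention when one of them is empty) gives
\[
\mu(H)^s\ge(1-\lambda)\mu(H_x)^s+\lambda\mu(H_y)^s\ge(1-\lambda)q_\mu(x)^s+\lambda q_\mu(y)^s .
\]
Taking the infimum over $H$ shows that $f$ is concave and nonnegative on $K$. Also $\sup f\le 1$; in fact $q_\mu\le 1/2$ for non-atomic $\mu$.

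\textbf{Berwald and the distribution of $f$.} For a nonnegative concave $f$ on $K$ and an $s$-concave measure, the generalized Berwald inequality says that $r\mapsto \mu(\{f\ge r\|f\|_\infty\})$ is dominated in the appropriate sense by the extremal ``cone'' profile. In our normalization this gives
\[
\mu(\{f\le t\|f\|_\infty\})\ge\text{(distribution for the cone)},
\]
and the cleaner equivalent form is that $-\log(f/\|f\|_\infty)$ is stochastically dominated in its Laplace transform by the extremal case. There the normalized $f$ behaves like $1-U^{s}$, i.e.\ like a Beta variable.

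The extremal case works out as follows. If $\mu$ is the $1/s$-dimensional cone measure and $f$ is the height function, then $-\log f$ has an exactly computable law, and
\[
\log \E f^{-p}=\log\Gamma\Big(\tfrac1s+1\Big)+\log\Gamma(1-p)-\log\Gamma\Big(\tfrac1s+1-p\Big).
\]
Passing to $-\log q_\mu=-\frac1s\log f$, the relevant quantity is the log-Laplace transform of $-\frac1s\log f$ at parameter $\alpha s$. Its second derivative is controlled by the trigamma terms $\psi'(1-\cdot)-\psi'(\tfrac1s+1-\cdot)\le\psi'(1)=\pi^2/6$, with the factor $1/s^2$ coming from the scaling.

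\textbf{Main obstacle.} The hard step is converting a Berwald-type comparison of moments, $\E f^{p}$ for $p>-1$ (normalized against the cone), into a variance bound. A moment comparison alone does not bound the variance; one needs comparison of the whole log-moment function with matching value and slope. My plan is to use the form of Berwald that gives monotonicity of
\[
p\mapsto \big(\E(f/\|f\|_\infty)^p\cdot\text{cone normalization}\big)^{1/p},
\]
and deduce a pointwise ordering of the Laplace transforms $L(\beta)=\log\E e^{\beta(-\log f)}$ against the extremal $L_0$ with $L(0)=L_0(0)=0$. If that ordering only yields one-sided control, I would instead work directly with the distribution function. Berwald-type results give $\mu(\{f\le t\})\le$ the cone value $1-(1-t)^{1/s}$ after normalization. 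From this I would obtain $\Var(\log f)\le\E(\log f-c)^2$ with $c=\log\|f\|_\infty$, and bound the latter by the cone's value, since $(\log f-c)^2$ is decreasing in $f$ on $f\le\|f\|_\infty$. The cone value is $\sum_{k\le 1/s}k^{-2}+(\text{mean})^2$. That sum is at most $\pi^2/6$, but the squared-mean term must be removed, so the argument has to center at the mean rather than at $c$. I expect this centering step to be the crux; the log-concavity of the distribution in the Beta form is what should allow the comparison of variances, not just second moments.
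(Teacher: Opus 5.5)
There is a genuine gap, and it sits exactly where you locate the crux. Your concavity step is correct. It is Lemma~\ref{lem:depth-s-concave}, and your version, which handles arbitrary closed half-spaces through $z$ and intersects with $K$ to get compact sets, is if anything more careful. Your extremal computation is also right: for $g_0\sim\mathrm{Beta}(1,1/s)$ one has $\operatorname{Var}(\log g_0)=\psi'(1)-\psi'(1+1/s)\le\pi^2/6$, which is precisely the bound the paper obtains.

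However, you never convert Berwald into a variance bound, and neither fallback can do it.
\begin{itemize}
\item The distribution-function route rests on the stochastic domination $c-\log g\preceq_{\mathrm{st}}-\log g_0$ with $c=\log\|g\|_\infty$. (Your first inequality in Step~2 has the wrong direction; the later one is correct.) A nonnegative variable dominated by $-\log g_0$ may be spread over $[0,\E(-\log g_0)]$. So this only bounds second moments, yielding $\psi'(1)-\psi'(1+1/s)+\bigl(\psi(1+1/s)-\psi(1)\bigr)^2$. The extra term is of order $\log^2(1/s)$, i.e.\ order $n^2\log^2 n$ for $\operatorname{Var}_{\mu_K}(-\log q_K)$.
\item An ordering $L\le L_0$ of uncentered log-Laplace transforms that agree only at $0$ fails for the same reason: their slopes at $0$ differ.
\end{itemize}

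The missing idea is that Berwald's monotonicity near $p=0$ already contains the centering. Take $g=q_\mu^s$, $a=1+1/s$ and
\[
C(p)=\log\int_K g^p\,d\mu-\log\bigl(pB(p,a)\bigr),\qquad pB(p,a)=\frac{\Gamma(p+1)\Gamma(a)}{\Gamma(p+a)}=\E g_0^p .
\]
Then $C(0)=0$, and Theorem~\ref{thm:Berwald-inequality} says exactly that $C(p)/p$ is non-increasing. Since $C(p)/p=C'(0)+\tfrac12C''(0)p+o(p)$, the mean discrepancy $C'(0)=\E_\mu\log g-\E\log g_0$ is merely the constant term. Monotonicity forces $C''(0)\le 0$, and $C''(0)=\operatorname{Var}_\mu(\log g)-\bigl(\psi'(1)-\psi'(a)\bigr)$. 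Differentiation under the integral is justified by the finiteness of all moments of $\log q_\mu$. Hence $s^2\operatorname{Var}_\mu(\log q_\mu)\le\psi'(1)-\psi'(a)\le\pi^2/6$.

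Equivalently, monotonicity on both sides of $0$ gives $C(p)\le C'(0)p$ on $(-1,\infty)$. So the \emph{centered} log-Laplace transforms are ordered and agree to first order at $0$, which is the ``matching value and slope'' you were after.

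Your distribution-function idea can also be rescued. The function $\phi(t)=\mu(\{g\ge t\})^s$ is concave. Compare it with the cone profile $(1-t/M_0)_+$, with $M_0$ chosen to match $\E\log g$: the survival functions cross once. The Karlin--Novikoff criterion then gives $\log g\le_{\mathrm{cx}}\log g_0$, hence the same bound. But the proposal contains neither argument.
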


This estimate is sharp in order for uniform measures, as illustrated
by the Euclidean ball. Combined with
Theorem~\ref{thm:intro-main-comparison}, it shows that the Cram\'er
transform and logarithmic half-space depth have fluctuations of order
at most $n$, while their means may be much larger.

This observation leads to a general sharp-threshold criterion for
random convex hulls. The precise meaning of this notion is given in Definition~\ref{def:sharp-threshold} below. Roughly speaking, if the mean Cram\'er transform
is large compared with $n$, then the logarithmic sample size required
for a random convex hull to capture a typical point is concentrated
around that mean. 

\begin{theorem}\label{thm:intro-sharp-threshold}
Let $(K_n)_{n\in\mathbb{N}}$ be a sequence of convex bodies in
$\mathbb{R}^n$, and let $\mu_{K_n}$ denote the corresponding uniform
probability measures. If
\[
\frac{1}{n}
\mathbb{E}_{\mu_{K_n}}
\left[\Lambda_{K_n}^*\right]
\longrightarrow+\infty,
\]
then the sequence $(\mu_{K_n})_{n\in\mathbb{N}}$ exhibits a sharp
threshold for random convex hulls around
\[
\left(
\mathbb{E}_{\mu_{K_n}}[\Lambda_{K_n}^*]
\right)_{n\in\mathbb{N}}.
\]
\end{theorem}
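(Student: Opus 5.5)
We have to guess what "sharp threshold" means. The natural reading is: for $N=\exp((1-\varepsilon)m_n)$ we have $\mathbb{E}_{\mu_{K_n}}\mathbb{P}(x\in K_N)\to0$, and for $N=\exp((1+\varepsilon)m_n)$ it tends to $1$. Here $m_n=\mathbb{E}_{\mu_{K_n}}[\Lambda_{K_n}^*]$, and one could equivalently phrase this via $\mathbb{E}\,\mu_{K_n}(K_N)$. The plan is to establish both directions through concentration of $\Lambda^*$, combining Theorem~\ref{thm:intro-depth-variance} with Theorem~\ref{thm:intro-main-comparison}.

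\emph{Step 1 (concentration).} Write $L=-\log q_{K_n}$. By Theorem~\ref{thm:intro-main-comparison}, $|L-\Lambda_{K_n}^*|\le \tfrac12\log n+C$ pointwise on $\operatorname{int}(K_n)$. Hence $|\mathbb{E}L-m_n|\le \tfrac12\log n+C$. Theorem~\ref{thm:intro-depth-variance} gives $\operatorname{Var}(L)\le \pi^2n^2/6$. Chebyshev then yields
\[
\mu_{K_n}\bigl(|\Lambda_{K_n}^*-m_n|>\varepsilon m_n\bigr)\le \mu_{K_n}\bigl(|L-\mathbb{E}L|>\varepsilon m_n-\log n-2C\bigr)\le \frac{\pi^2 n^2/6}{(\varepsilon m_n-\log n-2C)^2}\to0,
\]
since $m_n/n\to\infty$. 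The same estimate holds for $L$ in place of $\Lambda^*$.

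\emph{Step 2 (pointwise capture bounds).} For a fixed $x$, a union bound over the half-space $H$ nearly achieving $q(x)$ gives $\mathbb{P}(x\in K_N)\le Nq(x)$. Indeed, if all $X_i$ lie in the complement of the open half-space $\{\langle\theta,y\rangle<\langle\theta,x\rangle\}$... we should be careful about which inequality is meant. The correct statement is that $x\in K_N$ forces some $X_i$ into every closed half-space containing $x$. Hence $\mathbb{P}(x\in K_N)\le 1-(1-q)^N\le Nq$ when $H$ is the minimizing closed half-space. For the other direction, we use the Hayakawa–Lyons–Oberhauser bound $N_\mu(x)\le(3n+1)/q(x)$. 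By independence, with blocks of size $N_\mu(x)$ we get $\mathbb{P}(x\notin K_N)\le 2^{-\lfloor N/N_\mu(x)\rfloor}$. So $\mathbb{P}(x\in K_N)\ge 1-2^{-\lfloor Nq(x)/(3n+1)\rfloor}$.

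\emph{Step 3 (assembling).} Consider $N=e^{(1-\varepsilon)m_n}$ and split $K_n$ according to whether $\Lambda^*\ge(1-\varepsilon/2)m_n$. On the good set, $Nq\le Ne^{-\Lambda^*}\le e^{-\varepsilon m_n/2}\to0$. The bad set has vanishing measure by Step 1, so $\mathbb{E}\,\mathbb{P}(x\in K_N)\to0$. Now consider $N=e^{(1+\varepsilon)m_n}$. On the set where $\Lambda^*\le(1+\varepsilon/2)m_n$, Theorem~\ref{thm:intro-main-comparison} gives $q\ge c n^{-1/2}e^{-\Lambda^*}$. Thus $Nq/(3n+1)\ge c' n^{-3/2}e^{\varepsilon m_n/2}\to\infty$, because $m_n\ge n\to\infty$ dominates $\log n$. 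So $\mathbb{P}(x\in K_N)\to1$ uniformly there, and the expectation tends to $1$. The boundary of $K_n$ has measure zero and can be ignored. By Fubini, $\mathbb{E}\,\mu_{K_n}(K_N)$ has the same limits.

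The main obstacle is matching the precise formulation of Definition~\ref{def:sharp-threshold}. If it asks for thresholds at $\exp((1\pm\varepsilon)m_n)$ in terms of expected captured volume, the above works directly. If it is instead stated additively, e.g. $\exp(m_n\pm\omega_n n)$, the Chebyshev step must be run at window $\omega_n n\gg \log n$ rather than $\varepsilon m_n$. The argument above adapts, since the variance scale is $n^2$ and the comparison error is $O(\log n)$.
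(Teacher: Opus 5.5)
Your proof is correct, but it takes a different route from the paper's. The paper never argues pointwise. It sets $\delta_n=n/m_n$ and uses Proposition~\ref{prop:equivalence-q-star}, i.e.\ $\operatorname{Var}_{\mu_{K_n}}(\Lambda^*_{K_n})\le Cn^2$, to get $\beta(\mu_{K_n})\le C\delta_n^2$. It invokes the lower bound $\beta(\mu_{K_n})\ge c/n^2$ of Lemma~\ref{lem:mean-variance-star-estimates} only to verify the dimensional hypothesis of the external criterion Theorem~\ref{thm:rho-threshold} from \cite{BGP-threshold}. It then reads off $\varrho(\mu_{K_n},\delta_n)\le C\sqrt{\delta_n}\,m_n$.

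You instead prove the threshold directly:
\begin{itemize}
\item Chebyshev for $-\log q$ with the $O(n^2)$ variance bound, transferred to $\Lambda^*$ through the $O(\log n)$ comparison;
\item below the threshold, the union bound $\mathbb{P}(x\in K_N)\le Nq(x)\le Ne^{-\Lambda^*(x)}$;
\item above it, the Hayakawa--Lyons--Oberhauser bound amplified over independent blocks, $\mathbb{P}(x\notin K_N)\le 2^{-\lfloor Nq(x)/(3n+1)\rfloor}$, combined with $q\ge cn^{-1/2}e^{-\Lambda^*}$.
\end{itemize}
What your route buys is self-containedness. It needs neither the black-box criterion nor the mean bounds $c_1n\le\mathbb{E}[\Lambda^*]\le c_2n\log n$ and $\beta\ge c_3/n^2$ of Lemma~\ref{lem:mean-variance-star-estimates}, which rest on external results. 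The paper's route is shorter once Theorem~\ref{thm:rho-threshold} is available. Both give windows of the same order $\sqrt{\operatorname{Var}/\delta}$.

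To match Definition~\ref{def:sharp-threshold} literally, you should add two remarks.
\begin{itemize}
\item The map $N\mapsto\mathbb{E}_{\mu^N}[\mu(K_N)]$ is non-decreasing, since $K_N\subseteq K_{N+1}$. Hence checking a single $N$ on each side controls $\varrho_1$ and $\varrho_2$.
\item The definition asks for one sequence $\delta_n\to0$, so let $\varepsilon=\varepsilon_n\to0$, e.g.\ $\varepsilon_n=\sqrt{n/m_n}$. Then $\varepsilon_nm_n=\sqrt{nm_n}\gg n$, your Chebyshev bound becomes $O(n/m_n)$, and $n^{-3/2}e^{\sqrt{nm_n}/2}\to\infty$. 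This gives $\varrho\le 2\sqrt{nm_n}$, the same order as the paper's $C\sqrt{\delta_n}\,m_n$.
\end{itemize}
Relatedly, in your last paragraph an additive window $\omega_n n$ requires $\omega_n\to\infty$, not merely $\omega_n n\gg\log n$. The reason is that with variance of order $n^2$, the Chebyshev bound at that window is of order $1/\omega_n^2$.
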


The same criterion can be expressed in terms of logarithmic
half-space depth: if
\[
\frac{1}{n}
\mathbb{E}_{\mu_{K_n}}
[-\log q_{K_n}]
\longrightarrow+\infty,
\]
then the corresponding random convex hulls have a sharp threshold.

As an application, we consider the $\ell_p$-balls.

\begin{theorem}\label{thm:intro-pballs}
Let $p>1$. There exist constants $C_p,c_p>0$, depending only on $p$,
such that
\[
\mathbb{E}_{\mu_{B_p^n}}
[-\log q_{B_p^n}]
\geq
C_p n\log n-c_p n.
\]
Consequently, the sequence of uniform measures on the $\ell_p$-balls
exhibits a sharp threshold for random convex hulls.
\end{theorem}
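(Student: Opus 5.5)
By the half-space-depth form of Theorem~\ref{thm:intro-sharp-threshold}, the second assertion follows from the first, since $(C_pn\log n-c_pn)/n\to+\infty$. So the task is the lower bound on $\mathbb{E}_{\mu_{B_p^n}}[-\log q_{B_p^n}]$. The $\log n$ should come from the same mechanism as for the Euclidean ball. A typical point $x$ of $B_p^n$ lies at distance $\asymp 1/n$ from the boundary, measured in the gauge: $\mu(\{\|x\|_p\le 1-t/n\})=(1-t/n)^n\le e^{-t}$. A cap of width $\asymp 1/n$ through $x$ should then have volume $n^{-cn}$.

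\textbf{Step 1 (choice of half-space).} For $x\neq0$ let $u=u(x)=\nabla\|\cdot\|_p(x)$, so $u_i=\operatorname{sign}(x_i)|x_i|^{p-1}/\|x\|_p^{p-1}$. Then $\|u\|_{p'}=1$ and $\langle x,u\rangle=\|x\|_p$. Write $\delta=1-\|x\|_p$. Taking $H=\{y:\langle y,u\rangle\ge 1-\delta\}$ gives
\[
q_{B_p^n}(x)\le \mu_{B_p^n}\bigl(\{y\in B_p^n:\langle y,u\rangle\ge 1-\delta\}\bigr)=:V(u,\delta).
\]
This is a cap of $B_p^n$ of width $\delta$ in the direction $u$; the support value $h_{B_p^n}(u)=1$ is attained at $x/\|x\|_p$.

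\textbf{Step 2 (cap estimate; main obstacle).} I aim to prove, for $u$ coming from a "typical" $x$, that
\[
V(u,\delta)\le (C_p\,\delta)^{c_pn}+e^{-c_pn}\cdot(\text{negligible}).
\]
My first attempt uses the Schechtman--Zinn representation $Y=G/(\|G\|_p^p+Z)^{1/p}$, where the $G_i$ are i.i.d.\ with density $\propto e^{-|t|^p}$ and $Z\sim\mathrm{Exp}(1)$. The event $\langle Y,u\rangle\ge 1-\delta$ then forces two things at once. First, $\|Y\|_p\ge1-\delta$. Second, by the equality case of Hölder, $Y/\|Y\|_p$ must be within $O(\sqrt\delta)$-type distance of the maximizer $x/\|x\|_p$ along each of the $n-1$ transversal directions. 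Strict convexity for $p>1$ makes this quantitative via a uniform-convexity modulus of $\ell_p$ (power type $\max(2,p)$).

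The second condition should cost a factor $\delta^{c}$ per coordinate, with $c=1/2$ for $p\le 2$ and $c=1/p$ for $p\ge2$. This requires the coordinates of $x$ to be of size $\asymp n^{-1/p}$ for most indices, so that the local curvature of the $\ell_p$ sphere at $x/\|x\|_p$ is comparable to the generic one. The danger is a degenerate curvature at very small or very large coordinates when $p\neq2$.

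\textbf{Step 3 (typicality and integration).} Restrict to a good set $\mathcal G$ of $x$, with $\mu(\mathcal G)\ge1/2$ for large $n$. On $\mathcal G$, a proportion $\ge1/2$ of the coordinates satisfy $a_p n^{-1/p}\le|x_i|\le b_pn^{-1/p}$; this follows from the representation and the law of large numbers. On $\mathcal G$, Step~2 gives $-\log q(x)\ge c_pn\log(1/\delta)-C_pn$. Since $n\delta$ has an approximately $\mathrm{Exp}(1)$ law and is independent of the direction in the representation, one has $\mathbb{E}[\log(1/\delta)\mathbf 1_{\mathcal G}]\ge\frac12\log n-C$. Using $-\log q\ge0$ off $\mathcal G$ then yields $\mathbb{E}[-\log q]\ge C_pn\log n-c_pn$.

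If the direct cap estimate in Step~2 proves too delicate, my fallback is to pass through $-\log q\ge\Lambda^*$ (the elementary half of Theorem~\ref{thm:intro-main-comparison}). I would then bound $\Lambda^*(x)\ge\lambda\langle u,x\rangle-\Lambda(\lambda u)$ with $\lambda\asymp n/\delta$. This reduces Step~2 to a Laplace-transform bound $\Lambda(\lambda u)\le\lambda-c_pn\log\lambda+C_pn$, which I expect to establish by the same representation.
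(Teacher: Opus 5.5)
\paragraph{Assessment.} Your outer structure matches the paper's. You use the same supporting half-space with normal $u=\nabla\|\cdot\|_p(x)$ and the same exponents ($1/2$ for $p\le 2$, $1/p$ for $p>2$). You average $\log(1/\delta)$ against the radial law, and you read off the threshold from the Tukey-depth form of the criterion (Corollary~\ref{cor:sharp-threshold-tukey}).

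\paragraph{The gap.} Step~2 carries all the content, and you leave it as an aim. The route you sketch for it (Schechtman--Zinn, coordinatewise curvature, a typical set $\mathcal G$) targets a difficulty that does not exist. The missing idea is that the modulus of uniform convexity of $\ell_p$ is global, so it bounds the \emph{diameter} of every cap, uniformly in the direction.

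Put $z=x/\|x\|_p$, so $\langle z,u\rangle=1$, and let $y$ lie in the cap.
For $p>2$, combine Clarkson's inequality for $y,z$ with $\|\frac{y+z}{2}\|_p\ge\langle\frac{y+z}{2},u\rangle\ge 1-\frac{\delta}{2}$. This gives $\|\frac{y-z}{2}\|_p^p\le 1-(1-\frac{\delta}{2})^p\le\frac{p\delta}{2}$.
For $1<p\le 2$, apply the $2$-uniform convexity inequality of Ball--Carlen--Lieb to $z$ and $\widetilde y=y/\|y\|_p$, noting that $\langle\widetilde y,u\rangle\ge\langle y,u\rangle\ge 1-\delta$ and $1-\|y\|_p\le\delta$. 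This gives $\|y-z\|_p\le(1+2/\sqrt{p-1})\sqrt{\delta}$.

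Hence the cap lies in $z+D_p\delta^{\alpha_p}B_p^n$, and $q_{B_p^n}(x)\le D_p^n\delta^{\alpha_p n}$ for \emph{every} $x$, by homogeneity of volume alone. Your ``factor $\delta^{\alpha_p}$ per coordinate'' is just the scaling of an $n$-dimensional ball, not a local curvature count. The degeneracy you fear for $p\neq 2$ is already absorbed by the worst-case power type $\max(2,p)$ of the modulus, which is exactly why $\alpha_p=1/p$ when $p>2$.

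With a pointwise bound there is no need for $\mathcal G$. You get $-\log q_{B_p^n}(x)\ge\alpha_p n\log\frac{1}{1-\|x\|_p}-n\log D_p$ everywhere. Since $\|X\|_p$ has density $nr^{n-1}$ on $[0,1]$, $\mathbb{E}\log\frac{1}{1-\|X\|_p}=H_n$ exactly.

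\paragraph{Two further repairs.} These would be needed even within your framework.

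First, the target of Step~2 carries an additive term $e^{-c_pn}\cdot(\mathrm{negligible})$. An additive error of size $e^{-c_pn}$ only yields $-\log q\ge c_pn-O(1)$, which loses the $\log n$. To conclude as you do in Step~3, where the term is silently dropped, the error must itself be $O((C\delta)^{cn})$, i.e.\ of order $n^{-cn}$ at the typical scale $\delta\asymp 1/n$.

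Second, the fallback is misstated. As written, $\Lambda(\lambda u)\le\lambda-c_pn\log\lambda+C_pn$ fails at $\lambda\asymp n$ for large $n$: the right-hand side becomes negative, while $\Lambda\ge 0$ for the centered body $B_p^n$ by Jensen. The correct form is $\Lambda(\lambda u)\le\lambda-c_pn\log(\lambda/n)+C_pn$. It comes from integrating the very same cap bound:
$\mathbb{E}e^{\lambda\langle Y,u\rangle}=e^{\lambda}\int_0^\infty\lambda e^{-\lambda h}\mu(\langle Y,u\rangle\ge 1-h)\,dh\le e^{\lambda}D_p^n\Gamma(\alpha_pn+1)\lambda^{-\alpha_pn}$.
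So the fallback does not bypass Step~2.
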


The proof uses estimates for caps of $B_p^n$. For $1<p\leq2$,
uniform convexity yields caps of diameter of order $\sqrt h$, whereas
for $p>2$, Clarkson's inequality gives diameter of order $h^{1/p}$.
These estimates imply the required logarithmic growth of the expected
logarithmic depth.

\medskip

\noindent\textbf{Relation to previous work.}

The connection between random convex hulls and half-space depth has a
long history. Tukey's half-space depth is a classical notion of
statistical depth, while threshold phenomena for random polytopes
have been studied extensively in convex geometry and probability. In
particular, Dyer, F\"uredi and McDiarmid \cite{DFM} established sharp
threshold phenomena for random polytopes in the cube, followed by
work for various product and log-concave models; see e.g. \cite{Bonnet-Chasapis-Grote-Temesvari-Turchi-2019,BKT,Brazitikos-Chasapis-2024,BGP-depth,BGP-threshold,BP-discrete,CTV,FPT,GaGia,Giannopoulos-Tziotziou-2025,Pafis,Pivov}.

The Cram\'er transform as a natural parameter for random-convex-hull
thresholds was developed by Brazitikos, Giannopoulos and Pafis in
\cite{BGP-depth} and \cite{BGP-threshold}. They established
quantitative comparisons between the Cram\'er transform and
half-space depth for uniform measures on convex bodies, with an
$O(\sqrt n)$ error in the logarithmic comparison. Theorem
\ref{thm:intro-main-comparison} improves this to the optimal
$O(\log n)$ scale.

As mentioned above, Brazitikos and Chasapis
\cite{Brazitikos-Chasapis-2024} obtained a dimension-free comparison
for arbitrary log-concave probability measures. The two results are
complementary: their theorem has greater scope, while ours gives a
sharper additive dimensional error in the bounded-support setting.

The moment estimates for the Cram\'er transform are related to the
recent work of Giannopoulos and Tziotziou
\cite{Giannopoulos-Tziotziou-2025}. Our contribution is the explicit
two-parameter estimate
\[
\|\Lambda_\mu^*\|_{L^p(\mu)}
\lesssim n(\log n+p),
\]
together with the corresponding tail bound and, for convex bodies,
a more precise exponential-moment estimate.

Finally, Hayakawa, Lyons and Oberhauser \cite{HLO} proved the general
estimate
\[
\frac12\leq N_\mu(x)q_\mu(x)\leq3n+1.
\]
This result provides the combinatorial link between half-space depth
and random convex hulls. Combined with our pointwise comparison, it
turns the Cram\'er transform into a direct measure of logarithmic
sampling complexity.

\medskip

The results suggest a common framework connecting three notions of
complexity. The Cram\'er transform measures the analytic cost of
reaching a point, half-space depth measures its geometric centrality,
and random convex hulls convert these quantities into sampling
thresholds. For uniform measures on convex bodies, the first two
quantities agree up to the optimal $O(\log n)$ additive error, while
the $s$-concavity structure controls their fluctuations. This allows
estimates for one quantity to be transferred systematically to the
others.

The paper is organized as follows. Section~\ref{sec:notation} introduces the notation used throughout the paper. In
Section~\ref{sec:self-concordance} we establish the
$1/s$-self-concordance property for $s$-concave probability measures.
In Section~\ref{sec:depth-cramer} we prove the pointwise comparison
between the Cram\'er transform and half-space depth and establish its
sharpness for Euclidean balls. Section~\ref{sec:cramer-inside-convex-bodies}
studies the size of the Cram\'er transform inside convex bodies.
Section~\ref{sec:moments-cramer-transform} develops the moment and
exponential-integrability estimates. Sections
\ref{sec:s-concavity-half-space-depth} and
\ref{sec:threshold-random-convex-hulls} develop the
$s$-concavity and random-convex-hull arguments, including the sharp
threshold criterion and its application to $\ell_p$-balls.


\section{Notation and backround information}\label{sec:notation}

First, we introduce some basic notation and definitions. We work in ${\mathbb R}^n$, which is equipped with the standard inner product  $\langle\cdot ,\cdot\rangle $. Volume in $\mathbb{R}^n$ is denoted by $\vol_n$. For $p \geq 1$ we denote by $\|\cdot\|_p$ the $\ell_p$-norm and by $B_p^n$ the corresponding unit ball. We write $S^{n-1}$ for the Euclidean unit sphere in $\mathbb{R}^n$, i.e. \[S^{n-1}=\{x\in \mathbb{R}^n: \|x\|_2=1\}.\]

The letters $C, c, c_1, c_2$ etc. denote absolute positive constants whose value may change from line to line. Whenever we
write $a\asymp b$, we mean that there exist absolute constants $c_1,c_2>0$ such that $c_1a\leq b\leq c_2a$. Also the notation $a\asymp_\ell b$ means that the implicit constants depend additionally on $\ell$.

A convex body in ${\mathbb R}^n$ is a compact convex subset $K$ of ${\mathbb R}^n$ with non-empty interior.
We say that $K$ is centered if its barycenter ${\rm bar}(K)$ is at the origin, i.e. if
\begin{equation*}\int_K\langle x,u\rangle \, dx=0\end{equation*} for every $u\in S^{n-1}$.
The support function of $K$ is defined for every $y\in {\mathbb R}^n$ by
$h_K(y)=\max \{\langle x,y\rangle :x\in K\}$. If $0 \in \operatorname{int} (K)$,  we define the Minkowski functional of $K$, as $\|y\|_K=\inf \{ t>0: y \in tK\}$ for every $y \in \mathbb{R}^n$.

We say that a Borel probability measure $\mu $ on $\mathbb{R}^n$ is symmetric if $\mu (-B)=\mu (B)$ for every Borel subset $B$
of ${\mathbb R}^n$ and that $\mu $ is centered if the barycenter ${\rm bar}(\mu)=\int_{\mathbb{R}^n}x\,d\mu(x)$ of $\mu$ is at the origin, i.e.
\begin{equation*}\int_{\mathbb R^n} \langle x, u \rangle d\mu(x)= 0\end{equation*}
for all $u\in S^{n-1}$. Moreover, we say that $\mu$ is full-dimensional if $\mu(H)<1$ for every
hyperplane $H$ in ${\mathbb R}^n$.

A Borel measure $\mu$ on $\mathbb R^n$ is called log-concave if it is full-dimensional and
$$\mu(\lambda A+(1-\lambda)B) \geq \mu(A)^{\lambda}\mu(B)^{1-\lambda}$$
for any pair of compact sets $A,B$ in ${\mathbb R}^n$ and any $\lambda \in [0,1]$. Borell \cite{Borell-1974} has proved that, under these assumptions, $\mu $ has a log-concave density $f_{{\mu }}$. Recall that a function $f:\mathbb R^n \rightarrow [0,\infty)$ is called log-concave if its support $\{f>0\}$ is a convex set in ${\mathbb R}^n$ and the restriction of $\ln{f}$ to it is concave. 

If $\mu$ be a log-concave probability measure on $\mathbb R^n$, then for any $p\geq 1$ we define the
$L_p$-centroid body $Z_p(\mu)$ of $\mu $ as the convex body
whose support function is
\begin{equation*} h_{Z_p(\mu)}(y):=\left(
\int_{\mathbb R^n} |\langle x,y\rangle|^p f_{\mu}(x)dx \right)^{1/p},\qquad y\in {\mathbb R}^n.
\end{equation*} For $p \geq 1$ we also consider the convex bodies $Z_p^+(\mu)$, with support function \begin{equation*}h_{Z_p^+(\mu )}(y)=\left (\int_{{\mathbb R}^n}\langle
x,y\rangle_+^pf_{\mu }(x)dx\right )^{1/p},\qquad y\in {\mathbb R}^n,\end{equation*} where $a_+=\max\{a,0\}$. Observe that \[Z_p^+(\mu) \subseteq Z_p(\mu), \qquad p \geq 1.\]

We say that a measure $\mu $ on
${\mathbb R}^n$ is $s$-concave for some $0< s \leq 1/n$, if it is full-dimensional and
\begin{equation*}\mu ((1-\lambda )A+\lambda B)^s\geq
(1-\lambda )\mu^{s}(A)+\lambda \mu^{s}(B)\end{equation*}
for any pair of compact sets $A,B$ in ${\mathbb R}^n$  and any $\lambda\in [0,1]$. Every $s$-concave measure is compactly supported and is also log-concave. Moreover, if $\mu $ is $s$-concave and $0<s^{\prime}\leq s$ then $\mu $ is also $s^{\prime}$-concave. The Brunn-Minkowski inequality implies that if $\mu_K$ the uniform probability measure on a convex body $K$ in $\mathbb{R}^n$, then $\mu_K$ is $1/n$-concave.

A function $f:\mathbb R^n\to [0,\infty)$ is called $\gamma $-concave for some $\gamma>0$ if
\begin{equation*}f((1-\lambda )x+\lambda y)^\gamma\geq (1-\lambda )f^{\gamma }(x)+\lambda f^{\gamma }(y)\end{equation*}
for all $x,y\in {\mathbb R}^n$ with $f(x)f(y)>0$ and all $\lambda\in [0,1]$. Borell \cite{Borell-1975}
showed that if $\mu$ is a full-dimensional measure on ${\mathbb R}^n$ then for every $0<  s <1/n$ we have that
$\mu $ is $s$-concave if and only if it has a non-negative density $f\in L_{{\rm loc}}^1({\mathbb R}^n,dx)$
which is $\frac{s}{1-sn}$-concave.


\section{Self-concordance}\label{sec:self-concordance}

\begin{definition}\label{def:self-conc}\rm Let $K$ be a convex body in $\mathbb{R}^n$ and let $\Phi:\operatorname{int}(K)\to\mathbb{R}$. We say that $\Phi$ is a
barrier for $K$ if
$$\Phi(x)\longrightarrow+\infty,\qquad\text{as }\,x\longrightarrow\partial K.$$
If $\Phi$ is $C^3$, we say that it is self-concordant if, for every $x\in\operatorname{int}(K)$ and every $v\in\mathbb{R}^n$,
$$\left|\nabla^3\Phi(x)[v,v,v]\right|\leq 2\left\langle\nabla^2\Phi(x)v,v\right\rangle^{3/2}.$$
Furthermore, $\Phi$ is called $\alpha$-self-concordant if, in addition, for every $x\in\operatorname{int}(K)$ and every $v\in\mathbb{R}^n$,
$$\langle\nabla\Phi(x),v\rangle^2\leq\alpha\left\langle\nabla^2\Phi(x)v,v\right\rangle.$$
\end{definition}

Self-concordance is a central notion in the theory of interior-point methods and convex optimization, see e.g. \cite{NesterovNemirovskii}. In the context of convex geometry, Bubeck and Eldan \cite{BubeckEldan} showed that the Cram\'{e}r transform of the uniform measure on a convex body is a $(1+o(1))n$-self-concordant barrier. This bound was subsequently sharpened to the optimal parameter $n$ by Chewi \cite{n-self}. While these results concern the uniform measure on a convex body, we show that the self-concordance phenomenon extends naturally from uniform measures on convex bodies to the broader class of
$s$-concave measures and gives a sharp dependence on the concavity parameter. We expect that this result is well known to experts in the field; nevertheless, we include a proof for completeness. Our proof is a variation of a technique used in \cite{Fradelizi-Madiman-Wang-2016}.

\begin{theorem}\label{thm:s-concave-self-concordance}Let $0<s\leq 1/n$, and let $\mu$ be an $s$-concave probability measure
supported on a convex body $K$. Then $\Lambda_\mu^*$ is a $1/s$-self-concordant barrier for $K$.
\end{theorem}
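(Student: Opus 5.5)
The proof splits into three parts: the barrier property, the self-concordance inequality, and the parameter bound $1/s$. The tool throughout is Legendre duality between $\Lambda_\mu$ and $\Lambda_\mu^*$.

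Since $\mu$ is full-dimensional with support $K$, $\Lambda_\mu$ is smooth and strictly convex. Moreover $\nabla\Lambda_\mu:\mathbb{R}^n\to\operatorname{int}(K)$ is a diffeomorphism onto $\operatorname{int}(K)$, by the standard exponential-family fact that tilted barycenters fill the interior of the support. Hence for $x=\nabla\Lambda_\mu(\theta)$ we have
\[
\nabla\Lambda_\mu^*(x)=\theta,\qquad \nabla^2\Lambda_\mu^*(x)=\bigl(\nabla^2\Lambda_\mu(\theta)\bigr)^{-1}=\operatorname{Cov}(\mu_\theta)^{-1},
\]
where $d\mu_\theta\propto e^{\langle\theta,y\rangle}d\mu(y)$ is the tilted measure.

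\textbf{Barrier property.} Let $x\to x_0\in\partial K$, and let $u$ be an outer normal at $x_0$. Under $\mu$, the law of $\langle u,X\rangle$ is one-dimensional and $s$-concave with right endpoint $h_K(u)$. Its density behaves at most polynomially near that endpoint, so
\[
\Lambda_\mu(tu)\le t\,h_K(u)-c\log t+C_\mu \qquad (t\to\infty).
\]
Then
\[
\Lambda_\mu^*(x)\ge t\langle u,x\rangle-\Lambda_\mu(tu)\ge c\log t-C_\mu - t\bigl(h_K(u)-\langle u,x\rangle\bigr).
\]
Choosing $t\approx (h_K(u)-\langle u,x\rangle)^{-1}\to\infty$ gives $\Lambda_\mu^*(x)\to\infty$. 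The constants must be taken uniform over normals near $x_0$; compactness handles this.

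\textbf{Self-concordance.} The inequality $|\nabla^3\Lambda_\mu^*[v,v,v]|\le 2\langle\nabla^2\Lambda_\mu^* v,v\rangle^{3/2}$ is invariant under Legendre duality. Differentiating $\nabla^2\Lambda_\mu^*(\nabla\Lambda_\mu(\theta))=(\nabla^2\Lambda_\mu(\theta))^{-1}$ turns it into the same inequality for $\Lambda_\mu$ at $\theta$ in the direction $w=\nabla^2\Lambda_\mu^*(x)v$. That inequality is
\[
\bigl|\mathbb{E}_{\mu_\theta}(Z-\mathbb{E} Z)^3\bigr|\le 2\bigl(\operatorname{Var}_{\mu_\theta} Z\bigr)^{3/2},\qquad Z=\langle w,X\rangle .
\]
The law of $Z$ under $\mu_\theta$ is a one-dimensional log-concave measure. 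For such measures this third-cumulant bound is the known estimate used by Bubeck and Eldan, so I take it as given.

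\textbf{Parameter $1/s$ (main step).} By the duality formulas, the inequality $\langle\nabla\Lambda_\mu^*(x),v\rangle^2\le\frac1s\langle\nabla^2\Lambda_\mu^*(x)v,v\rangle$ for all $v$ reads $\langle\theta,v\rangle^2\le\frac1s\langle\operatorname{Cov}(\mu_\theta)^{-1}v,v\rangle$. Optimizing over $v$, this is equivalent to $\langle\operatorname{Cov}(\mu_\theta)\theta,\theta\rangle\le 1/s$, that is, $\operatorname{Var}_{\mu_\theta}\langle\theta,X\rangle\le 1/s$. Write $\theta=t u$ with $|u|=1$ and let $Y=\langle u,X\rangle$ under $\mu$. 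The claim reduces to the one-dimensional inequality
\[
t^2\Lambda_Y''(t)\le\frac1s,\qquad t\in\mathbb{R}.
\]
Since marginals of $s$-concave measures are $s$-concave, the law of $Y$ has a density $f=\varphi^{1/\gamma}$ with $\gamma=s/(1-s)$ and $\varphi$ concave on an interval $[a,b]$.

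This is where I expect the real difficulty, because the constant must be exactly $1/s$. My first attempt is a Brascamp--Lieb argument for the tilted density $h=e^{ty}\varphi^{1/\gamma}/Z_t$. Its potential satisfies
\[
V''=-(\log f)''=\frac1\gamma\Bigl(\frac{\varphi'^2}{\varphi^2}-\frac{\varphi''}{\varphi}\Bigr)\ge \frac1\gamma\Bigl(\frac{\varphi'}{\varphi}\Bigr)^2 .
\]
Brascamp--Lieb would then give $\operatorname{Var}_h(Y)\le\mathbb{E}_h[\gamma\varphi^2/\varphi'^2]$. Next I would relate $\varphi'/\varphi$ to $t$ through the integration-by-parts identity $\int (e^{ty}\varphi^{1/\gamma})'\,dy=0$, whose boundary terms vanish. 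This identity gives $t=-\frac1\gamma\mathbb{E}_h[\varphi'/\varphi]$.

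If this loses a constant, I will fall back on the Fradelizi--Madiman--Wang device. One lifts $\mu$ to a log-concave measure in dimension about $1/s$, namely a cone or "$s$-concave to log-concave" extension. On the lift, $t\langle u,X\rangle$ plays the role of a linear functional on a genuinely convex body of dimension $1/s$. The sharp entropic-barrier bound $t^2\Lambda''(t)\le N$ in dimension $N$ (the one-dimensional case of Chewi's optimal-parameter theorem) then yields $1/s$ directly.
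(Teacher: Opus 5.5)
Your preliminary reductions are sound. The duality formulas, the barrier argument, the transfer of the third-order inequality to $\Lambda_\mu$ with the log-concave third-cumulant bound, and the reduction of the parameter to $t^2\Lambda_Y''(t)\le 1/s$ for a density $\varphi^r$, $r=1/s-1$, are what the paper does or cites. The gap is that this last inequality, which is the whole content of the theorem, is never proved.

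\textbf{The Brascamp--Lieb attempt.} It does not just lose a constant; in general it is vacuous. The identity $t=-\frac1\gamma\mathbb{E}_h[\varphi'/\varphi]$ requires $\varphi(a)=\varphi(b)=0$. That fails whenever the density is positive at an endpoint, for example a cylinder or the base of a cone. The bound $\mathbb{E}_h[\gamma\varphi^2/\varphi'^2]$ is $+\infty$ whenever $\varphi'$ vanishes too fast. This happens identically for a cylinder ($\varphi$ constant), and near $y=0$ for the marginal of $B_2^n$, where $\varphi=(1-y^2)^{1/2}$ gives $\varphi^2/\varphi'^2=(1-y^2)^2/y^2$.

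\textbf{The fallback.} It cannot produce the exact constant either. Convex bodies, and Chewi's theorem, live in integer dimension. By Brunn's principle, a linear marginal of a uniform measure on a convex body in $\mathbb{R}^N$ is $1/N$-concave. Some marginals are exactly $s$-concave, such as the density $\propto y^r$ on $[0,1]$, which arises from an $s$-concave measure on $\mathbb{R}^n$. For these, any lift needs $N\ge 1/s$, so you only get $t^2\Lambda_Y''(t)\le\lceil 1/s\rceil$. That proves the theorem only when $1/s\in\mathbb{N}$; for $s=1/n$ it is just Chewi's theorem.

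\textbf{What is missing} is an argument in which $1/s$ enters as a real exponent. The actual Fradelizi--Madiman--Wang device, which the paper uses, does exactly this without any lift. Put $b=h_K(u)$ and $Z=b-\langle u,X\rangle\ge 0$, whose density is $\psi^r$ with $\psi$ concave. For $t>0$ the substitution $w=tz$ gives
\[
t^{1/s}\,\mathbb{E}e^{-tZ}=\int_0^\infty e^{-w}\Bigl[t\,\psi\Bigl(\frac{w}{t}\Bigr)\Bigr]^r\,dw .
\]
The perspective $(t,w)\mapsto t\psi(w/t)$ is jointly concave, so the integrand is log-concave in $(t,w)$. Pr\'ekopa--Leindler then shows that $t\mapsto t^{1/s}\mathbb{E}e^{-tZ}$ is log-concave. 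Since $\log\mathbb{E}e^{-tZ}=\Lambda_\mu(tu)-tb$, this is precisely $t^2\Lambda_Y''(t)\le 1/s$ (use $-u$ for $t<0$).

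If you prefer to keep Chewi's theorem as a black box, your lift can be repaired by tensorization. For $k$ i.i.d.\ copies of $Y$, write $\prod_{i\le k}\varphi(y_i)^r=\Phi(y)^M$ with $M=\lceil kr\rceil$ and $\Phi=\bigl(\prod_{i\le k}\varphi(y_i)^{1/k}\bigr)^{kr/M}$. This $\Phi$ is concave, since a geometric mean of concave functions is concave and $kr/M\le 1$. So the product law is a marginal of a uniform measure on a convex body in $\mathbb{R}^{k+M}$. Chewi's bound in the direction $t(1,\dots,1,0,\dots,0)$ gives $kt^2\Lambda_Y''(t)\le k+\lceil kr\rceil$, and letting $k\to\infty$ yields $1/s$.
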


\begin{proof}The asymptotic behavior of $\Lambda_\mu^*$ near the boundary of $K$ is immediate. The self-concordance of $\Lambda_\mu^*$ follows from
\cite[Lemma~1, Lemma~2]{BubeckEldan}, since the exponential tilts \[d\mu_\theta(x)=e^{\langle\theta,x\rangle-\Lambda_{\mu}(\theta)}\,d\mu(x), \qquad \theta \in \mathbb{R}^n\] of an $s$-concave measure are log-concave. It remains to prove the $1/s$-self-concordance inequality.

We first establish the required estimate for the Hessian of $\Lambda_\mu^*$. Fix $u\in S^{n-1}$ and set
$$Y=\langle u,X\rangle,\quad b=h_K(u),$$
where $h_K$ denotes the support function of $K$. Define
$$Z=b-Y.$$
Then $Z$ is supported on $[0,\infty)$.

Let $h$ denote the density of $Z$. Since $\mu$ is $s$-concave, its one-dimensional marginals are also $s$-concave. In particular, $h$ can
be written in the form
$$h(z)=\psi(z)^r,\qquad r=\frac{1}{s}-1,$$
where $\psi$ is a nonnegative concave function on its support.

For $t>0$, consider the Laplace transform
$$I(t)=\mathbb{E}[e^{-tZ}]=\int_0^\infty e^{-tz}\psi(z)^r\,dz.$$
After the change of variables $w=tz$, we obtain
$$t^{1/s}I(t)=\int_0^\infty e^{-w}\left[t\psi\left(\frac{w}{t}\right)\right]^r\,dw.$$

The map
$$(t,w)\longmapsto t\psi\left(\frac{w}{t}\right)$$
is the perspective of the concave function $\psi$ and is therefore concave on its natural domain. Since $r>0$, the function
$$(t,w)\longmapsto e^{-w}\left[t\psi\left(\frac{w}{t}\right)\right]^r$$
is log-concave. By the Pr\'ekopa--Leindler theorem, its integral with respect to $w$ is log-concave as a function of $t$. Hence
$$t\longmapsto t^{1/s}I(t)$$
is log-concave.

Consequently,
$$\frac{d^2}{dt^2}\log\left(t^{1/s}I(t)\right)\leq 0,$$
and therefore
$$\frac{d^2}{dt^2}\log I(t)\leq\frac{1}{st^2}.$$
On the other hand,
$$I(t)=\mathbb{E}[e^{-t(b-Y)}]=e^{-tb+\Lambda_\mu(tu)}.$$
Thus
$$\frac{d^2}{dt^2}\log I(t)=\frac{d^2}{dt^2}\Lambda_\mu(tu)=\operatorname{Var}_{\mu_{tu}}\bigl(\langle X,u\rangle\bigr),$$
where
$$d\mu_{tu}(x)=e^{t\langle u,x\rangle-\Lambda_\mu(tu)}\,d\mu(x)$$
is the exponential tilt of $\mu$ and $X$ is distributed according to $\mu_{tu}$.

It follows that
$$\operatorname{Var}_{\mu_{tu}}\bigl(\langle X,u\rangle\bigr)\leq\frac{1}{st^2}.$$
Equivalently,
\begin{equation}\label{eq:tilted-variance-upper-bound}
\operatorname{Var}_{\mu_{tu}}\bigl(\langle X,tu\rangle\bigr)\leq\frac{1}{s}.\end{equation}
We now transfer this estimate to the Hessian of the Legendre transform. Let
$$x=\nabla\Lambda_\mu(tu).$$
By the standard duality relations between $\Lambda_\mu$ and $\Lambda_\mu^*$,
$$tu=\nabla\Lambda_\mu^*(x)$$
and
$$\nabla^2\Lambda_\mu^*(x)=\bigl(\nabla^2\Lambda_\mu(tu)\bigr)^{-1}.$$
Hence
\begin{align*}
\left\langle\nabla\Lambda_\mu^*(x),\bigl(\nabla^2\Lambda_\mu^*(x)\bigr)^{-1}\nabla\Lambda_\mu^*(x)\right\rangle
&=\left\langle tu,\nabla^2\Lambda_\mu(tu)\,tu\right\rangle\\
&=\operatorname{Var}_{\mu_{tu}}\bigl(\langle X,tu\rangle\bigr)\\
&\leq\frac{1}{s}.
\end{align*}
Thus $\Lambda_\mu^*$ satisfies the gradient--Hessian inequality
$$\left\langle\nabla\Lambda_\mu^*(x),\bigl(\nabla^2\Lambda_\mu^*(x)\bigr)^{-1}\nabla\Lambda_\mu^*(x)\right\rangle\leq\frac{1}{s}.$$
This is equivalent to the self-concordance-parameter inequality in Definition~\ref{def:self-conc}, with parameter $1/s$.
\end{proof}

In fact, if $\Lambda_\mu^*$ is sufficiently large, we can also obtain a lower bound for the quantity on the left-hand side of
\eqref{eq:tilted-variance-upper-bound}. We will first need the following standard consequence of self-concordance; see \cite[Theorem 4.1.7]{book-optim}.

\begin{lemma}\label{lem:self-conc-inequality-local}If $f:\operatorname{int}(K)\to\mathbb{R}$ is self-concordant, then
$$f(y)\geq f(x)+\langle\nabla f(x),y-x\rangle+\omega\left(\sqrt{\left\langle\nabla^2f(x)(y-x),y-x\right\rangle}\right),
\qquad x,y\in\operatorname{int}(K),$$
where
$$\omega(s)=s-\log(1+s),\quad s\geq 0.$$
\end{lemma}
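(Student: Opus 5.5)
The plan is to reduce the statement to one variable by restricting $f$ to the segment between $x$ and $y$, and then to integrate a differential inequality along that segment. Set $v=y-x$ and $\phi(t)=f(x+tv)$ for $t\in[0,1]$; since $\operatorname{int}(K)$ is convex, the whole segment lies in the domain. Differentiating, $\phi'(t)=\langle\nabla f(x+tv),v\rangle$, $\phi''(t)=\langle\nabla^2f(x+tv)v,v\rangle$ and $\phi'''(t)=\nabla^3f(x+tv)[v,v,v]$. By Definition~\ref{def:self-conc}, $|\phi'''|\le 2(\phi'')^{3/2}$ on $[0,1]$. Write $r=\sqrt{\phi''(0)}$, which is exactly the local norm appearing in $\omega$. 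The inequality to prove becomes
$$\phi(1)\ge\phi(0)+\phi'(0)+\omega(r).$$
If $r=0$, convexity ($\phi''\ge0$) already gives $\phi(1)\ge\phi(0)+\phi'(0)$, so I will assume $r>0$.

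Next I will get a lower bound on $\phi''$. Where $\phi''>0$, set $g=(\phi'')^{-1/2}$. Then $g'=-\tfrac12\phi'''(\phi'')^{-3/2}$, so $|g'|\le1$ and hence $g(t)\le g(0)+t=1/r+t$. This gives
$$\phi''(t)\ge\frac{r^2}{(1+rt)^2}.$$
One has to check that $\phi''$ cannot vanish at some interior point. The bound $|g'|\le1$ keeps $g$ finite on any interval starting at $0$ where $\phi''>0$, so $\phi''$ stays bounded below there; a continuity argument then shows $\phi''>0$ on all of $[0,1]$. I expect this step, justifying the use of $g$ on the whole interval, to be the only real obstacle. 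The alternative route is to cite the standard Dikin-ellipsoid fact that $\phi''$ does not degenerate inside the domain, which is part of the usual theory in \cite{book-optim}.

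Finally I will integrate twice. Integrating the lower bound once gives
$$\phi'(t)-\phi'(0)\ge r-\frac{r}{1+rt}=\frac{r^2t}{1+rt}.$$
Integrating again over $[0,1]$ gives
$$\phi(1)-\phi(0)-\phi'(0)\ge\int_0^1\frac{r^2t}{1+rt}\,dt=r-\log(1+r)=\omega(r),$$
which is the claimed inequality.
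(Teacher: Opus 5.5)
Your proof is correct and is the standard argument. The paper gives no proof of its own and only cites Theorem~4.1.7 of Nesterov's book, which is proved the same way: the bound $\phi''(t)\ge r^2/(1+rt)^2$ comes from $|((\phi'')^{-1/2})'|\le 1$ along the segment and is then integrated twice. Your continuity argument ruling out vanishing of $\phi''$ on $[0,1]$ when $r>0$ is valid, and the case $r=0$ is correctly handled by convexity.
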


\begin{proposition} Let $\mu$ be a centered $s$-concave probability measure supported on $K$. If $x\in \operatorname{int}(K)$ and
$$\Lambda_\mu^*(x)\geq\log 2,$$
then
$$\frac{1}{2}\leq\operatorname{Var}_{\mu_{tu}}\bigl(\langle X,tu\rangle\bigr)\leq\frac{1}{s},$$
where $x=\nabla\Lambda_\mu(tu)$ for some unique $u\in S^{n-1}$ and $t>0$.
\end{proposition}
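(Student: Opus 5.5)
The upper bound is exactly \eqref{eq:tilted-variance-upper-bound} from the proof of Theorem~\ref{thm:s-concave-self-concordance}. The plan for the lower bound is to recognize the tilted variance as the squared Newton decrement of $\Lambda_\mu^*$ at $x$. Then I would use the standard fact that a self-concordant function with small Newton decrement is close to its minimum value. Since $\mu$ is centered, that minimum value is $\Lambda_\mu^*(0)=0$, and this will contradict $\Lambda_\mu^*(x)\geq\log 2$.

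\emph{Step 1 (identification).} Write $\theta=tu=\nabla\Lambda_\mu^*(x)$, which is well defined by the duality relations used in the proof of Theorem~\ref{thm:s-concave-self-concordance}. As computed there,
$$\lambda(x)^2:=\left\langle\nabla\Lambda_\mu^*(x),\bigl(\nabla^2\Lambda_\mu^*(x)\bigr)^{-1}\nabla\Lambda_\mu^*(x)\right\rangle=\operatorname{Var}_{\mu_{tu}}\bigl(\langle X,tu\rangle\bigr).$$
So it suffices to show that $\lambda(x)^2\geq 1/2$. Note also that $\Lambda_\mu^*\geq 0$, with $\Lambda_\mu^*(0)=0$ because $\mu$ is centered (Jensen gives $\Lambda_\mu\geq 0$, with equality at $\theta=0$). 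Hence $0\in\operatorname{int}(K)$ is the global minimizer of $\Lambda_\mu^*$.

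\emph{Step 2 (the decrement bound).} The key input is the classical estimate for self-concordant functions (see \cite{book-optim}, the theorem following Theorem~4.1.7 on Newton decrements). If $f$ is self-concordant, attains its minimum at $x^\ast$, and $\lambda_f(x)<1$, then
$$f(x)-f(x^\ast)\leq\omega_*(\lambda_f(x)),\qquad \omega_*(r)=-r-\log(1-r).$$
If I had to rederive it, I would use the dual counterpart of Lemma~\ref{lem:self-conc-inequality-local}: $f(y)\geq f(x)+\langle\nabla f(x),y-x\rangle+\omega(\|y-x\|_x)$. Then I would minimize the right-hand side over $r=\|y-x\|_x$, using $\langle\nabla f(x),y-x\rangle\geq-\lambda_f(x)\,r$. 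This gives $f(x^\ast)\geq f(x)-\sup_{r\geq 0}\{\lambda r-\omega(r)\}=f(x)-\omega_*(\lambda)$. Making this conjugate computation precise is the main technical point, but it is standard.

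\emph{Step 3 (conclusion).} Suppose, for contradiction, that $\lambda:=\lambda(x)<1/\sqrt2$. Since $\omega_*$ is increasing on $[0,1)$, Step 2 gives
$$\Lambda_\mu^*(x)=\Lambda_\mu^*(x)-\Lambda_\mu^*(0)\leq\omega_*(\lambda)<\omega_*(1/\sqrt2)=-\tfrac{1}{\sqrt2}+\log\frac{\sqrt2}{\sqrt2-1}\approx 0.521<\log 2.$$
This contradicts the hypothesis, so $\operatorname{Var}_{\mu_{tu}}(\langle X,tu\rangle)=\lambda^2\geq 1/2$.

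Finally, uniqueness of $u\in S^{n-1}$ and $t>0$ holds because $\nabla\Lambda_\mu$ is a bijection onto $\operatorname{int}(K)$ and $x\neq 0$, since $\Lambda_\mu^*(x)>0$.
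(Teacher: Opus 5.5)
Your proposal is correct and follows essentially the paper's route. The tilted variance is the squared Newton decrement of $\Lambda_\mu^*$ at $x$; applying the self-concordance lower bound of Lemma~\ref{lem:self-conc-inequality-local} at $y=0$, together with Cauchy--Schwarz in the local norm, gives $\Lambda_\mu^*(x)\leq\sup_{r\geq0}\{\lambda r-\omega(r)\}=-\lambda-\log(1-\lambda)$ for $\lambda<1$. Your threshold $\lambda<1/\sqrt2$ is in fact the one needed for the stated constant: the paper's write-up thresholds $\sigma(x)$ at $1/2$ and so literally gives only $\operatorname{Var}_{\mu_{tu}}(\langle X,tu\rangle)\geq 1/4$, whereas your computation $\omega_*(1/\sqrt2)\approx 0.521<\log 2$ yields the claimed bound $1/2$.
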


\begin{proof}
The upper bound follows from Theorem~\ref{thm:s-concave-self-concordance}. For the lower bound, set
$$\sigma^2(x)=\operatorname{Var}_{\mu_{tu}}\bigl(\langle X,tu\rangle\bigr),
\qquad r=\sqrt{\left\langle\nabla^2\Lambda_\mu^*(x)x,x\right\rangle}.$$
We apply Lemma~\ref{lem:self-conc-inequality-local} with $f=\Lambda_\mu^*$, $y=0$, and the given point $x$. This gives
$$\Lambda_\mu^*(0)\geq\Lambda_\mu^*(x)-\langle\nabla\Lambda_\mu^*(x),x\rangle +\omega(r).$$
Since $\mu$ is centered, $\Lambda_\mu^*(0)=0$. Moreover, by Cauchy--Schwarz,
$$\langle\nabla\Lambda_\mu^*(x),x\rangle\leq\sigma(x)r.$$
Therefore,
$$\Lambda_\mu^*(x)\leq\sigma(x)r-\omega(r).$$
If $\sigma(x)<1/2$, maximizing the right-hand side over $r\geq0$ gives
$$\Lambda_\mu^*(x)\leq -\sigma(x)-\log(1-\sigma(x))<\log 2-\sigma(x)\leq\log 2,$$
which contradicts the assumption $\Lambda_\mu^*(x)\geq\log 2$. Hence
$$\sigma(x)\geq\frac{1}{2},$$
and the proof follows.
\end{proof}

As we will see in Section~\ref{sec:depth-cramer}, self-concordance is not merely an auxiliary analytic property in this setting. It provides the mechanism 
behind the control of the local variance of exponentially tilted measures, which in turn yields the sharp comparison between half-space depth and the Cram\'{e}r transform.

\section{Half-space depth and the Cram\'{e}r transform}\label{sec:depth-cramer}

The purpose of this section is to compare the half-space depth $q_\mu$ with the Cram\'{e}r transform $\Lambda_\mu^*$. We begin with an easy observation. If $\mu$ is a Borel probability measure on $\mathbb{R}^n$ and $T$ is an invertible affine transformation, then for the push-forward probability measure $T_\ast\mu$ we have that $$\Lambda_{T_\ast\mu}^\ast(x)=\Lambda_{\mu}^\ast(T^{-1}x), \quad q_{T_\ast\mu}(x)=q_\mu(T^{-1}x)$$ for every $x \in \mathbb{R}^n$. Therefore, inequalities involving only the Cram\'{e}r transform and the half-space depth, are affinely invariant. The same is true about the expectations of functions of $\Lambda_\mu^*$ and $q_\mu$. 

Let now $\mu$ be an $s$-concave probability measure on $\mathbb{R}^n$, supported on a convex body $K$. The main result is the following pointwise estimate.

\begin{theorem}\label{thm:q-lower-bound-star}
There exists an absolute constant $C>0$ such that
$$q_\mu(x)\geq C\sqrt{s}\,e^{-\Lambda_\mu^*(x)},\qquad x\in\operatorname{int}(K).$$
In particular, for the uniform probability measure on $K$,
$$q_K(x)\geq\frac{C}{\sqrt{n}}e^{-\Lambda_K^*(x)},\qquad x\in\operatorname{int}(K).$$
\end{theorem}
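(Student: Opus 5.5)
The plan is to reduce to a single closed half-space and control its mass by tilting. Fix $x\in\operatorname{int}(K)$ and a closed half-space $H=\{y:\langle y,u\rangle\geq\langle x,u\rangle\}$ containing $x$, with $u\in S^{n-1}$. If $\Lambda_\mu^*(x)$ is bounded by an absolute constant, the bound follows from Grünbaum-type estimates: for log-concave $\mu$, half-spaces containing points of bounded Cramér transform have mass bounded below. Otherwise write $x=\nabla\Lambda_\mu(\theta)$ with $\theta=tu_0$, $t>0$; this is possible since $\nabla\Lambda_\mu$ is a diffeomorphism onto $\operatorname{int}(K)$. Then $\Lambda_\mu^*(x)=\langle\theta,x\rangle-\Lambda_\mu(\theta)$, and the change of measure gives
\[
\mu(H)=e^{\Lambda_\mu(\theta)}\int_H e^{-\langle\theta,y\rangle}\,d\mu_\theta(y)
=e^{-\Lambda_\mu^*(x)}\,\E_{\mu_\theta}\!\left[e^{-\langle\theta,Y-x\rangle}\mathds{1}_H(Y)\right],
\]
where $Y\sim\mu_\theta$ has barycenter $x$. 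It therefore suffices to bound the last expectation from below by $c\sqrt s$, uniformly in $u$.

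For the reduction to one dimension, I first note that the worst half-space should be the one with $u=u_0$, parallel to $\theta$. On $H$ we have $\langle\theta,Y-x\rangle\le$ something only if $u=u_0$, so for general $u$ I expect to handle this differently. Intersecting with a strip, on $H\cap\{\langle\theta,Y-x\rangle\le 1\}$ the weight is at least $e^{-1}$. So it suffices to show
\[
\mu_\theta\bigl(\langle u,Y-x\rangle\ge0,\ \langle\theta,Y-x\rangle\le1\bigr)\ge c\sqrt s .
\]
The variable $W=\langle\theta,Y-x\rangle$ is centered, log-concave, and has variance $\sigma^2=\operatorname{Var}_{\mu_\theta}\langle Y,\theta\rangle\le 1/s$ by Theorem~\ref{thm:s-concave-self-concordance}, and $\sigma^2\geq1/2$ by the Proposition. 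For a centered one-dimensional log-concave density $g$ with variance $\sigma^2$ we have $g\ge c/\sigma$ on $[-c',c']\sigma\supseteq[-c'/\sqrt2,c'/\sqrt2]$. Hence $\mu_\theta(0\le W\le \delta)\ge c\delta/\sigma\ge c\delta\sqrt s$. This is where the factor $\sqrt s$ comes from.

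To handle the direction $u$, I apply a two-dimensional argument to the joint law of $(\langle u,Y-x\rangle,W)$. It is a centered log-concave vector in $\mathbb R^2$. I want the mass of the region $\{a\ge0,\ 0\le W\le\delta\}$ to be at least $c\,\mu_\theta(0\le W\le\delta)$. Conditioned on the slab $|W|\le\delta$ (with $\delta\lesssim\sigma$), the law of $a$ should remain roughly centered. This holds because the centered log-concave pair restricted to a thin central slab has the conditional mean of $a$ within a constant multiple of its conditional spread, so Grünbaum's inequality on the one-dimensional conditional marginal gives a constant fraction with $a\ge0$. A cleaner route is to apply Grünbaum in the plane to the tilted-and-restricted measure; alternatively, one can use that the slab restriction of a log-concave measure is log-concave and its barycenter lies near the hyperplane $W=0$, with the $a$-coordinate controlled by the Brunn principle.

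The main obstacle is exactly this non-parallel direction step: ensuring a half-space whose normal differs from $\theta$ still captures a constant fraction of the slab mass. I would first try the planar-marginal Grünbaum argument. If that fails, I would fall back on choosing the slab around $\langle\theta,\cdot\rangle$ using the symmetric window $|W|\le\delta$ together with the weight bound $e^{-\delta}$ valid for $W\le\delta$.
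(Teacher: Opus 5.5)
There is a genuine gap, at exactly the step you flag as the main obstacle. Your tilting identity
\[
\mu(H)=e^{-\Lambda_\mu^*(x)}\,\E_{\mu_\theta}\bigl[e^{-\langle\theta,Y-x\rangle}\mathbf{1}_H(Y)\bigr]
\]
is correct. The trouble is the choice $\theta=\nabla\Lambda_\mu^*(x)$: this tilt is attached to the point $x$, not to the half-space, so for $u\neq u_0$ the weight $e^{-W}$ and the half-space are misaligned.

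The one-sided reduction you write down is actually false. For $u=-u_0$ you have $\langle u,Y-x\rangle\ge0$ if and only if $W\le0$, so $\mu_\theta(\langle u,Y-x\rangle\ge0,\ 0\le W\le\delta)=\mu_\theta(W=0)=0$. By dominated convergence this mass tends to $0$ as $u\to-u_0$, while $\mu_\theta(0\le W\le\delta)$ stays fixed.

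The symmetric-window fallback would need a "Grünbaum inequality for thin central slabs": $x$ keeps depth $\ge c$ in $\mu_\theta$ restricted to $\{|W|\le\delta\}$, uniformly over directions. This is not a standard result, and your heuristics do not establish it. The conditional laws on the slices $\{W=w\}$ can move by far more than their own width as $w$ ranges over $[-\delta,\delta]$, so "roughly centered" needs a real proof. Your preliminary case, where $\Lambda_\mu^*(x)$ is bounded, is also only asserted.

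The missing idea is to tilt along the normal of the half-space you are estimating, which makes the problem one-dimensional.
\begin{itemize}
\item After centering, half-spaces with $\langle x,u\rangle\le0$ contain the barycenter, so Grünbaum gives them mass at least $1/e$.
\item For $h=\langle x,u\rangle\in(0,h_K(u))$, choose $\tau>0$ with $\frac{d}{d\tau}\Lambda_\mu(\tau u)=h$. This is possible because that derivative increases from $0$ to $h_K(u)$.
\item Then
\[
\mu(H)=e^{-(\tau h-\Lambda_\mu(\tau u))}\,\E_{\mu_{\tau u}}\bigl[e^{-Z}\mathbf{1}_{\{Z\ge0\}}\bigr],\qquad Z=\tau(\langle X,u\rangle-h),
\]
where $Z$ is centered and log-concave under $\mu_{\tau u}$.
\item Since $\langle\tau u,x\rangle=\tau h$, the definition of the Legendre transform gives $\tau h-\Lambda_\mu(\tau u)\le\Lambda_\mu^*(x)$.
\end{itemize}
You never need $\nabla\Lambda_\mu(\tau u)=x$, which holds only when $u\parallel\nabla\Lambda_\mu^*(x)$. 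The paper's proof asserts this equality, but only the inequality is actually used.

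To finish, use the variance bound and the one-dimensional density estimate.
\begin{itemize}
\item The bound $\Var(Z)\le1/s$ from the proof of Theorem~\ref{thm:s-concave-self-concordance} holds for every direction and every $\tau>0$.
\item The density lower bound $c_0/\sigma$ on $[0,c_1\sigma]$ then gives $\E[e^{-Z}\mathbf{1}_{\{Z\ge0\}}]\ge\frac{c_0}{\sigma}(1-e^{-c_1\sigma})\ge c/(1+\sigma)\ge c\sqrt s$.
\end{itemize}
This removes the planar argument entirely. It also removes your case split on $\Lambda_\mu^*(x)\ge\log2$ and the lower variance bound from the Proposition, because small $\sigma$ is already handled by $\frac{c_0}{\sigma}(1-e^{-c_1\sigma})\gtrsim1$.
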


\begin{proof}
We may assume without loss of generality that $\mu$ is centered. 
By Gr\"unbaum's lemma (see \cite[Lemma~2.2.6]{BGVV-book}, it suffices to consider half-spaces of the form
$$H^+=\left\{y\in\mathbb{R}^n:\langle y,u\rangle\geq\langle x,u\rangle\right\},$$
where $u\in S^{n-1}$ and $\langle x,u\rangle>0$. Set
$$h=\langle x,u\rangle$$
and let $t>0$ be the unique solution of
$$\langle\nabla\Lambda_\mu(tu),u\rangle=h.$$
Then
$$\nabla\Lambda_\mu(tu)=x.$$

Let $X$ be a random vector distributed according to the exponential tilt $\mu_{tu}$, and set
$$Z=t\bigl(\langle X,u\rangle-h\bigr).$$
Then $Z$ is centered and log-concave, with variance
$$\sigma_t^2=\operatorname{Var}_{\mu_{tu}}\bigl(\langle X,tu\rangle\bigr)\leq\frac{1}{s}$$
by \eqref{eq:tilted-variance-upper-bound}. Moreover,
$$\mu\left(\left\{y:\langle y,u\rangle\geq h\right\}\right)=e^{-\Lambda_\mu^*(x)}\mathbb{E}_{\mu_{tu}}\left[e^{-Z}\mathbf{1}_{\{Z\geq0\}}\right].$$

We now use a standard one-dimensional fact about log-concave probability densities. There exist absolute constants $c_0,c_1>0$
such that the density $g$ of every centered, variance-one, one-dimensional log-concave random variable satisfies
$$g(z)\geq c_0,\qquad 0\leq z\leq c_1.$$
For a proof, see \cite[Lemma~5.5, Theorem~5.14]{LV07}. After rescaling, the density of $Z$ is bounded below by $c_0/\sigma_t$ on $[0,c_1\sigma_t]$. Hence
\begin{align*}
\mathbb{E}_{\mu_{tu}}\left[e^{-Z}\mathbf{1}_{\{Z\geq0\}}\right]
&\geq \frac{c_0}{\sigma_t}\int_0^{c_1\sigma_t}e^{-z}\,dz\\
&\geq\frac{C}{1+\sigma_t}.
\end{align*}
Since $\sigma_t\leq 1/\sqrt{s}$ and $s \leq 1/n$, this gives
$$\mathbb{E}_{\mu_{tu}}\left[e^{-Z}\mathbf{1}_{\{Z\geq0\}}\right]\geq C\sqrt{s}.$$
Therefore,
$$\mu(H^+)\geq C\sqrt{s}\,e^{-\Lambda_\mu^*(x)}.$$
Taking the infimum over all relevant half-spaces proves the theorem.
\end{proof}

The factor $\sqrt{n}$ for the uniform measure on a convex body in Theorem~\ref{thm:q-lower-bound-star} is optimal. This can already be
seen for Euclidean balls.

\begin{proposition}\label{prop:optimality-of-inequality}
Let $B_2^n$ denote the Euclidean unit ball. For every fixed $r\in(0,1)$,
$$q_{B_2^n}(re_1)\asymp_r\frac{1}{\sqrt{n}} e^{-\Lambda_{B_2^n}^*(re_1)}.$$
\end{proposition}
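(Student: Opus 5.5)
The lower bound $q_{B_2^n}(re_1)\gtrsim n^{-1/2}e^{-\Lambda^*(re_1)}$ is Theorem~\ref{thm:q-lower-bound-star}. So the task is the matching upper bound
\[
q_{B_2^n}(re_1)\lesssim_r n^{-1/2}e^{-\Lambda^*(re_1)}.
\]
I will test the half-space $H^+=\{y:y_1\geq r\}$, since $q\leq\mu(H^+)$. By the exponential tilting identity from the proof of Theorem~\ref{thm:q-lower-bound-star},
\[
\mu(H^+)=e^{-\Lambda^*(x)}\,\mathbb{E}_{\mu_{tu}}\bigl[e^{-Z}\mathbf{1}_{\{Z\geq0\}}\bigr],
\]
where $u=e_1$ and $Z=t(X_1-r)$ under the tilt. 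It therefore suffices to show that this expectation is $O_r(n^{-1/2})$. The natural mechanism is that the density of $Z$ near $0$ is $O_r(1/\sqrt n)$, i.e. $Z$ has standard deviation of order $\sqrt n$ and is spread out.

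The steps are as follows.

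\emph{Step 1: $t\asymp_r n$.} The marginal density of $X_1$ is $f(y)=c_n(1-y^2)^{(n-1)/2}$ on $[-1,1]$. The tilted marginal density is proportional to $e^{ty}(1-y^2)^{(n-1)/2}$. Its mean must equal $r$.
\begin{itemize}
\item I will take $t=\tau n$ and locate the maximizer of the exponent $n\bigl[\tau y+\tfrac12\log(1-y^2)\bigr]$. This maximizer is $y^*$ solving $\tau=y/(1-y^2)$.
\item A Laplace-method argument then gives mean $=y^*+O(1/n)$, so $\tau=r/(1-r^2)+O(1/n)$ and $t\asymp_r n$.
\item For a crude but sufficient version, log-concavity suffices: the mode and mean of a log-concave density differ by at most a constant multiple of the standard deviation, which is $O(n^{-1/2})$.
\end{itemize}

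\emph{Step 2: the density of $Z$ is small at $0$.} The density of $Z$ at $z$ is $t^{-1}\tilde f(r+z/t)$, where $\tilde f$ is the tilted marginal density of $X_1$.
\begin{itemize}
\item The exponent $\phi(y)=ty+\tfrac{n-1}{2}\log(1-y^2)$ has second derivative of order $-n$ uniformly on a neighbourhood of $r$ bounded away from $\pm1$.
\item So $\tilde f$ is a log-concave density whose log is uniformly $n$-strongly concave near $r$. Hence $\tilde f\lesssim_r\sqrt n$ everywhere, for instance by the bound $\|\tilde f\|_\infty\lesssim1/\sigma$ for log-concave densities together with $\sigma\asymp_r n^{-1/2}$.
\item Then the density of $Z$ is at most $t^{-1}\cdot C_r\sqrt n\asymp_r n^{-1/2}$.
\end{itemize}
Combining,
\[
\mathbb{E}\bigl[e^{-Z}\mathbf{1}_{\{Z\geq0\}}\bigr]\leq \|g_Z\|_\infty\int_0^\infty e^{-z}\,dz\lesssim_r n^{-1/2},
\]
which gives the upper bound.

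The main obstacle is the variance estimate $\Var(X_1)\gtrsim_r 1/n$ under the tilt, which is what makes $\|\tilde f\|_\infty\lesssim\sqrt n$. Equivalently, I need $\sigma_t^2=t^2\Var\asymp_r n$. I will first try the self-concordance route: the Proposition above gives $\sigma_t^2\geq1/2$, which is too weak. So instead I will do the direct computation. The tilted marginal is an explicit one-dimensional log-concave density whose log-derivative changes by order $n$ over intervals of length $n^{-1/2}$ around $r$, while the mass stays within distance $O(n^{-1/2})$ of $y^*$. A standard Laplace/Gaussian comparison then yields $\Var\asymp_r 1/n$.

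For the upper bound, the only input actually needed is the sup bound on the density. This follows directly: since $\phi$ is concave with $\phi''\leq -c_r n$ near the mode, the normalized density $\tilde f$ satisfies $\tilde f(y^*)\lesssim\sqrt{n/c_r}$.
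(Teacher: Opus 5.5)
Your argument is correct, but it follows a different route from the paper. The paper proves both directions of $\asymp_r$ by explicit computation, without using Theorem~\ref{thm:q-lower-bound-star}:
it expresses $e^{\Lambda_{\theta_1}}$ through the Bessel function $I_{n/2}$ and uses Amos-type bounds on $I_{a+1}/I_a$ to obtain $-\frac n2\log(1-r^2)\le\Lambda^*_{B_2^n}(re_1)\le-\frac{n+2}{2}\log(1-r^2)$. It then compares this with the known cap formula $q_{B_2^n}(re_1)=(1-r^2)^{(n+1)/2}h(r,n)$, where $h(r,n)\asymp_r n^{-1/2}$.

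You take the lower bound from Theorem~\ref{thm:q-lower-bound-star} and obtain the upper bound from the same tilting identity. The identity is exact here because rotational symmetry gives $\Lambda^*_{B_2^n}(re_1)=\Lambda^*_{X_1}(r)$. You then use $\mathbb{E}[e^{-Z}\mathbf{1}_{\{Z\geq0\}}]\le\|g_Z\|_\infty\lesssim1/\sigma_t$, so everything reduces to $\sigma_t^2\gtrsim_r n$.

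What each route buys:
\begin{itemize}
\item Your route needs only one-dimensional log-concave facts, and it explains the mechanism. For this half-space, $\mu(H^+)\asymp e^{-\Lambda^*(re_1)}/(1+\sigma_t)$. So the factor $n^{-1/2}$ in Theorem~\ref{thm:q-lower-bound-star} is sharp precisely because the ball saturates the self-concordance bound $\sigma_t^2\le n$ up to a constant.
\item The paper's route gives explicit bounds valid for all $n$ and all $r\in[0,1)$, and it reuses them later: \eqref{eq:unit-ball-star-bounds} feeds Proposition~\ref{prop:ball-cramer-moments} and the variance remark. Your constants depend on $r$ non-explicitly and hold for $n\geq n_0(r)$; the finitely many remaining $n$ are trivial.
\end{itemize}

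One correction to your last sentence: the inequality there points the wrong way. An inequality $\phi''\le-\kappa$ controls $\int e^{\phi}$ from above, namely $\int e^{\phi}\le e^{\phi(y^*)}\sqrt{2\pi/\kappa}$. That bounds the peak $\tilde f(y^*)$ from \emph{below}, not above.

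The upper bound $\tilde f(y^*)\lesssim_r\sqrt n$ needs the opposite inequality, $\phi''\geq-C_rn$ on an interval of fixed length around $y^*$. This holds because $\phi''(y)=-(n-1)(1+y^2)/(1-y^2)^2$ and Step~1 keeps $y^*$ in a compact subinterval of $(-1,1)$.

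So both directions are used, in different steps:
\begin{itemize}
\item In Step~1, the global bound $\phi''\le-(n-1)$ gives $\Var(X_1)\le1/(n-1)$ by Brascamp--Lieb. This places the mode within $O(n^{-1/2})$ of $r$.
\item In Step~2, the local bound $\phi''\geq-C_rn$ gives the sup bound, equivalently $\Var(X_1)\gtrsim_r1/n$.
\end{itemize}
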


\begin{proof}
By rotational invariance,
$$\Lambda_{B_2^n}^*(x)=\Lambda_{X_1}^*(\|x\|_2),$$
where $X$ is uniformly distributed on $B_2^n$. If $\theta$ is uniformly distributed on $S^{n+1}$, then
$$X_1\overset{d}{=}\theta_1.$$
Following \cite[Section~4.1]{Brazitikos-Chasapis-2024}, the Laplace transform of $\theta_1$ can be expressed in terms of the modified Bessel function:
$$e^{\Lambda_{\theta_1}(t)}=\Gamma\left(\frac{n}{2}+1\right)\left(\frac{t}{2}\right)^{-n/2}I_{n/2}(t).$$
Differentiating and using the identity
$$I_a'(t)=I_{a+1}(t)+\frac{a}{t}I_a(t)$$
gives
$$\Lambda_{\theta_1}'(t)=R_{n/2}(t),\qquad R_a(t):=\frac{I_{a+1}(t)}{I_a(t)}.$$
We use the Amos-type bounds (see equation (9) in \cite{Amos})
$$G_{a+1}(t)\leq R_a(t)\leq G_a(t),$$
where
$$G_a(t)=\frac{t}{a+\sqrt{t^2+a^2}},\qquad G_a^{-1}(r)=\frac{2ar}{1-r^2}.$$
Let $t_r$ be determined by
$$\Lambda_{\theta_1}'(t_r)=r.$$
Putting $a=n/2$ in the preceding inequalities gives
$$G_{a+1}(t_r)\leq r\leq G_a(t_r).$$
Since $G_a$ is increasing,
$$\frac{nr}{1-r^2}\leq (\Lambda_{\theta_1}^*)'(r)\leq\frac{(n+2)r}{1-r^2}.$$
Integrating from $0$ to $r$ yields
\begin{equation}\label{eq:unit-ball-star-bounds}
-\frac{n}{2}\log(1-r^2)\leq\Lambda_{B_2^n}^*(re_1)\leq -\frac{n+2}{2}\log(1-r^2).
\end{equation}
Equivalently,
$$(1-r^2)^{(n+2)/2}\leq e^{-\Lambda_{B_2^n}^*(re_1)}\leq (1-r^2)^{n/2}.$$
On the other hand, an explicit computation of the half-space depth in the Euclidean ball gives
$$q_{B_2^n}(re_1)= (1-r^2)^{(n+1)/2}h(r,n),$$
where
$$\frac{1}{\sqrt{2\pi(n+2)}}\leq h(r,n)\leq\frac{1}{r\sqrt{2\pi n}}.$$ See \cite[Lemma~2.2]{Bonnet-Chasapis-Grote-Temesvari-Turchi-2019}.
For fixed $r\in(0,1)$, these estimates imply
$$q_{B_2^n}(re_1)\asymp_r \frac{1}{\sqrt{n}}e^{-\Lambda_{B_2^n}^*(re_1)}.$$
\end{proof}

The preceding proposition shows that the loss of order $\sqrt{n}$ in Theorem~\ref{thm:q-lower-bound-star} is not an artifact of the proof.
It is already present for the Euclidean ball. The next observation shows, however, that boundedness of the support is essential.

\begin{remark}\label{rem:general-logconcave-counterexample}\rm 
There is no direct analogue of Theorem~\ref{thm:q-lower-bound-star} for arbitrary log-concave probability measures with the same dimensional dependence. Consider
the symmetric exponential probability measure $\nu$ on $\mathbb{R}$ and its product measure
$$\nu_n=\nu^{\otimes n}.$$
Let $x_n=ne_1$. A direct computation gives
$$\frac{q_\nu(r)}{e^{-\Lambda_\nu^*(r)}}\sim\frac{1}{er},\qquad r\to\infty.$$
Since
$$\Lambda_{\nu_n}^*(x_n)=\Lambda_\nu^*(n)$$
and
$$q_{\nu_n}(x_n)\leq q_\nu(n),$$
we obtain
$$\frac{q_{\nu_n}(x_n)}{e^{-\Lambda_{\nu_n}^*(x_n)}}\leq\frac{C}{n}.$$
Thus, in contrast with the convex-body setting, the ratio between half-space depth and the exponential of the negative Cram\'{e}r transform
can be as small as order $1/n$.
\end{remark}

The pointwise comparison has an immediate geometric consequence. For $p>0$, define the depth region
$$T_p(K)=\left\{x\in K:q_K(x)\geq e^{-p}\right\}.$$
If
$$B_p(K)=\left\{x\in K:\Lambda_K^*(x)\leq p\right\}$$
denotes the corresponding Cram\'{e}r sublevel set, then Theorem~\ref{thm:q-lower-bound-star} gives
$$T_p(K)\subseteq B_p(K)\subseteq T_{p+\frac{1}{2}\log n+C}(K).$$
Indeed, the first inclusion follows from
$$q_K(x)\geq e^{-p}\quad\Longrightarrow\quad\Lambda_K^*(x)\leq p,$$
while the second follows directly from
$$q_K(x)\geq\frac{C}{\sqrt{n}}e^{-\Lambda_K^*(x)}.$$

Thus, up to an additive $O(\log n)$ term in the level parameter, half-space depth regions and Cram\'{e}r sublevel sets describe the same
family of subsets of $K$.

\section{The Cram\'{e}r transform inside convex bodies}\label{sec:cramer-inside-convex-bodies}

We next investigate how large the Cram\'{e}r transform can be on subsets of a convex body. Throughout this section, $K$ is a centered convex body in
$\mathbb{R}^n$ and $\mu_K$ denotes the uniform probability measure on $K$. For $\eta\in(0,1)$, define
$$M_\eta=\max_{x\in\eta K}\Lambda_K^*(x).$$

The following upper bound is a direct consequence of the convexity of $K$.

\begin{lemma}\label{lem:star-max-upper-bound}
For every $\eta\in(0,1)$,
$$M_\eta\leq n\log\left(\frac{1}{1-\eta}\right).$$
In particular, for every $x\in\operatorname{int}(K)$,
$$\Lambda_K^*(x)\leq n\log\left(\frac{1}{1-\|x\|_K}\right).$$
\end{lemma}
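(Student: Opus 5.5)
The plan is to bound $\Lambda_K^*(x)$ from above by exhibiting a large half-space-type set, i.e. to use the elementary inequality $q_K(x)\le e^{-\Lambda_K^*(x)}$ in the form $\Lambda_K^*(x)\le -\log q_K(x)$ together with a lower bound on $q_K(x)$ coming from convexity. Concretely, fix $x\in\eta K$ with $\eta\in(0,1)$. Since $K$ is convex and $0\in K$, for every $y\in K$ we have $x+(1-\eta)y=\eta(x/\eta)+(1-\eta)y\in K$, so
$$x+(1-\eta)K\subseteq K.$$
Now let $H$ be any closed half-space containing $x$. Its translate $H-x$ contains $0$, so it contains a closed half-space through the origin, and by Grünbaum-type reasoning one might hope $\mu_K((1-\eta)K\cap(H-x))$ is large. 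Rather than Grünbaum (which only gives a constant $1/e$), I will avoid half-spaces and bound $\Lambda_K^*$ directly from its definition.

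Directly: for $\theta\in\mathbb{R}^n$,
$$\int_K e^{\langle\theta,y\rangle}\,dy\;\ge\;\int_{x+(1-\eta)K}e^{\langle\theta,y\rangle}\,dy=(1-\eta)^n e^{\langle\theta,x\rangle}\int_K e^{(1-\eta)\langle\theta,z\rangle}\,dz,$$
after the change of variables $y=x+(1-\eta)z$. Dividing by $\vol_n(K)$ and using Jensen with $K$ centered, $\frac{1}{\vol_n(K)}\int_K e^{(1-\eta)\langle\theta,z\rangle}dz\ge e^{0}=1$. Hence $\Lambda_K(\theta)\ge \langle\theta,x\rangle+n\log(1-\eta)$ for all $\theta$, i.e. $\langle\theta,x\rangle-\Lambda_K(\theta)\le n\log\frac{1}{1-\eta}$, and taking the supremum over $\theta$ gives $\Lambda_K^*(x)\le n\log\frac{1}{1-\eta}$. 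Maximizing over $x\in\eta K$ gives the bound on $M_\eta$.

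For the second claim, given $x\in\operatorname{int}(K)$ apply the above with $\eta=\|x\|_K<1$ (note $0\in\operatorname{int}(K)$ since $K$ is centered, so the Minkowski functional is defined and $x\in\|x\|_K K$); if $x=0$ the bound is trivial since $\Lambda_K^*(0)=0$. There is no real obstacle; the only point needing care is the Jensen step, which is exactly where centering of $K$ is used.
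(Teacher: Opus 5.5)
Your argument is correct and matches the paper's proof: you use the inclusion $x+(1-\eta)K\subseteq K$, the change of variables $y=x+(1-\eta)z$, and Jensen's inequality with the centering of $K$, then take $\eta=\|x\|_K$ for the second claim. You can simply delete the abandoned half-space detour in your first paragraph; your separate handling of $x=0$ is a small detail the paper leaves implicit.
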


\begin{proof}
Fix $x\in\eta K$ and $t\in\mathbb{R}^n$. Since
$$x+(1-\eta)K\subseteq K,$$
we have
$$e^{\Lambda_K(t)}=\frac{1}{\vol_n(K)}\int_K e^{\langle t,y\rangle}\,dy\geq\frac{1}{\vol_n(K)}\int_{x+(1-\eta)K}e^{\langle t,y\rangle}\,dy.$$
Making the change of variables
$$y=x+(1-\eta)z$$
gives
$$e^{\Lambda_K(t)}\geq (1-\eta)^ne^{\langle t,x\rangle}\frac{1}{\vol_n(K)}\int_Ke^{(1-\eta)\langle t,z\rangle}\,dz.$$
Since $K$ is centered, Jensen's inequality yields
$$\frac{1}{\vol_n(K)}\int_Ke^{(1-\eta)\langle t,z\rangle}\,dz\geq\exp\left((1-\eta)\frac{1}{\vol_n(K)}\int_K\langle t,z\rangle\,dz\right)=1.$$
Consequently,
$$\Lambda_K(t)\geq\langle t,x\rangle + n\log(1-\eta).$$
It follows that
$$\langle t,x\rangle-\Lambda_K(t)\leq n\log\left(\frac{1}{1-\eta}\right).$$
Taking the supremum over $t\in\mathbb{R}^n$ gives
$$\Lambda_K^*(x)\leq n\log\left(\frac{1}{1-\eta}\right).$$
Taking the maximum over $x\in\eta K$ proves the first assertion.

For the second assertion, take $\eta=\|x\|_K$. Since $x\in\eta K$, the previous estimate gives
$$\Lambda_K^*(x)\leq n\log\left(\frac{1}{1-\|x\|_K}\right).$$
\end{proof}

The preceding estimate has the correct order near the boundary. In particular, the Cram\'{e}r transform necessarily becomes large as one
approaches $\partial K$. We next establish a lower bound in the interior which, although of a different form near the boundary, shows
that $\Lambda_K^*$ already reaches order $n$ on every fixed dilation $\eta K$.

\begin{lemma}\label{lem:star-max-lower-bound}
There exists an absolute constant $c>0$ such that, for every $\eta\in(0,1)$,
$$M_\eta\geq c\eta^2 n.$$
\end{lemma}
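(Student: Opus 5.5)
We reduce to one dimension, bound a single tail probability, and convert it into a lower bound for $\Lambda_K^*$ through Theorem~\ref{thm:q-lower-bound-star}. Throughout, $X$ is distributed according to $\mu_K$.

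\textbf{Choice of direction and point.} Pick $u\in S^{n-1}$ with $b:=h_K(u)\geq a:=h_K(-u)$, replacing $u$ by $-u$ if necessary. Let $z\in K$ satisfy $\langle z,u\rangle=b$, and set $x=\eta z\in\eta K$. Put $Y=\langle X,u\rangle$. Then $Y$ is centered, supported on $[-a,b]$, and its density has the form $\psi^{n-1}$ with $\psi$ concave. Moreover,
$$q_K(x)\leq\mu_K\bigl(\{y:\langle y,u\rangle\geq\eta b\}\bigr)=\mathbb{P}(Y\geq\eta b).$$
The choice $b\geq a$ matters. In the cone with base at $b$, centering forces $b\approx a/n$, so the upper tail is not small at level $\eta b$. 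Our normalization excludes this configuration and leaves the "apex-type" and "ball-type" marginals. For these the tails are at most $(1-\eta)^n$ and about $(1-\eta^2)^{n/2}$ respectively.

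\textbf{Key one-dimensional lemma (main obstacle).} We aim to prove that
$$\mathbb{P}(Y\geq\eta b)\leq Ce^{-c\eta^2n}$$
for every centered random variable $Y$ with a $1/n$-concave law on $[-a,b]$ and $b\geq a$. This is the step I expect to be hardest. I would first try a localization argument. Since $\psi$ is concave, it lies below its tangent line at $\eta b$. This suggests comparing with an extremal density, $\psi$ affine on $[\eta b,b]$, while the mass on $[-a,\eta b]$ is rearranged so that the mean stays zero. The tail is then computed explicitly by a Beta-type integral. The constraints $b\geq a$ and $\mathbb{E}Y=0$ bound the slope of $\psi$ at $\eta b$ relative to its value at $0$. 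Combined with Gr\"unbaum's bound $\mathbb{P}(Y\geq0)\geq1/e$, this should give a ratio $\psi(\eta b)/\max\psi\leq1-c\eta^2$, and hence a tail of order $(1-c\eta^2)^n$.

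A backup route works with the Laplace transform. We would show that the tilted variances satisfy $\Lambda_Y''(t)\leq Cb^2/n$ for $0\leq t\leq n/b$, which refines the bound $\Lambda_Y''(t)\leq n/t^2$ from Theorem~\ref{thm:s-concave-self-concordance}. Integrating twice gives $\Lambda_Y(t)\leq Cb^2t^2/(2n)$ in this range. Taking $t=\lambda n/b$ in $\Lambda_Y^*(\eta b)\geq t\eta b-\Lambda_Y(t)$ then yields $\Lambda_Y^*(\eta b)\geq c\eta^2n$ directly. Finally $\Lambda_K^*(x)\geq\Lambda_Y^*(\langle x,u\rangle)$, because restricting the supremum to $\theta\in\mathbb{R}u$ can only decrease it.

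\textbf{Conclusion.} Using the first route, Theorem~\ref{thm:q-lower-bound-star} gives
$$\Lambda_K^*(x)\geq-\log q_K(x)-\tfrac12\log n-C\geq c\eta^2n-\tfrac12\log n-C',$$
which suffices when $\eta^2n\geq C''\log n$.

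In the remaining regime $\eta^2n\lesssim\log n$, I would use self-concordance instead. Since $\mu_K$ is centered, we have $\Lambda_K^*(0)=0$ and $\nabla\Lambda_K^*(0)=0$. Lemma~\ref{lem:self-conc-inequality-local} then gives $\Lambda_K^*(x)\geq\omega(\|x\|_0)$, where $\|x\|_0^2=\langle\operatorname{Cov}(\mu_K)^{-1}x,x\rangle$. In the one-dimensional projection, $\|\eta z\|_0\geq\eta b/\sqrt{\operatorname{Var}Y}$. A $1/n$-concave variable on $[-a,b]$ satisfies $\operatorname{Var}Y\leq Cb^2/n$ (Berwald-type moment comparison). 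Hence $\|\eta z\|_0\gtrsim\eta\sqrt n$, so $\Lambda_K^*(x)\gtrsim\min\{\eta\sqrt n,\eta^2n\}\geq c\eta^2n$. This argument also covers small $\eta\sqrt n$, since $\omega(s)\asymp s^2$ near $0$.

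Combining the two regimes gives $M_\eta\geq c\eta^2n$.
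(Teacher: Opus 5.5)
There is a genuine gap, and it is the step you flagged as the main obstacle: your one-dimensional lemma is false. The same false premise drives your backup route and your small-$\eta$ argument. Take $K=[-1,1]^n$ and $u=e_1$. Then $a=b=1$, and $Y=\langle X,e_1\rangle$ is uniform on $[-1,1]$. This is a centered law of the form $\psi^{n-1}$ with $\psi$ constant, yet
\[
\mathbb{P}(Y\geq\eta b)=\frac{1-\eta}{2},\qquad \operatorname{Var}Y=\Lambda_Y''(0)=\frac{b^2}{3}.
\]
So none of $\mathbb{P}(Y\geq\eta b)\leq Ce^{-c\eta^2n}$, $\Lambda_Y''(t)\leq Cb^2/n$, or the ``Berwald-type'' bound $\operatorname{Var}Y\leq Cb^2/n$ holds. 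The normalization $b\geq a$ excludes the cone with base at $b$, but not cylinder-type (uniform) marginals. Worse, with the admissible choice $z=e_1$ your point is $x=\eta e_1$, and $\Lambda_K^*(\eta e_1)=\Lambda_Y^*(\eta)$ is bounded independently of $n$. So no sharper estimate can rescue an arbitrary choice of $(u,z)$; the point itself must be selected.

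The missing idea is the averaging identity $\mathbb{E}_{\mu_K}\langle\Sigma^{-1}X,X\rangle=\operatorname{tr}(\Sigma^{-1}\Sigma)=n$, where $\Sigma=\operatorname{Cov}(\mu_K)$. It produces $y_0\in K$ with $\langle\Sigma^{-1}y_0,y_0\rangle\geq n$, equivalently a direction in which $\langle y_0,u\rangle\geq\sqrt n\,(\operatorname{Var}\langle X,u\rangle)^{1/2}$. Combined with Lemma~\ref{lem:self-conc-inequality-local} at the origin, this gives $M_\eta\geq\omega(\eta\sqrt n)$, which is what the paper uses.

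Second, even granting correct one-dimensional input, your two regimes do not meet. Routing through Theorem~\ref{thm:q-lower-bound-star} costs $\frac12\log n$, so your first route needs $\eta^2n\geq C''\log n$. On the other side, $\omega(s)\leq s$, so for $\eta\sqrt n\geq 1$ self-concordance gives only about $\eta\sqrt n$. Hence your step $\min\{\eta\sqrt n,\eta^2n\}\geq c\eta^2n$ fails: in the window $1\ll\eta^2n\lesssim\log n$ you lose a factor up to order $\sqrt{\log n}$.

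The paper avoids this by splitting at $M_\eta=1$ rather than at $\eta^2n\approx\log n$.
\begin{itemize}
\item When $M_\eta<1$, it uses self-concordance. Here $\omega(\eta\sqrt n)<1$ forces $\eta\sqrt n\leq s_0$, a range where $\omega(s)\geq cs^2$.
\item When $M_\eta\geq1$, it argues by volume, with no direction to choose and no $\log n$ loss. The Giannopoulos--Tziotziou inclusion gives $\eta K\subseteq B_{M_\eta}(K)\subseteq 2Z_{c_1M_\eta}(\mu_K)$. If $M_\eta<n/c_1$, the bound $\vol_n(Z_p(\mu_K))^{1/n}\lesssim\sqrt{p/n}\,\vol_n(K)^{1/n}$ for $p\leq n$ then yields $\eta\lesssim\sqrt{M_\eta/n}$.
\end{itemize}

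Your backup route bounds $\Lambda_Y$ directly, and since $\Lambda_K^*\geq\Lambda_Y^*$ this incurs no $\log n$ loss, so it could in principle cover the window. But it would need a well-chosen direction. It would also need a tilted-variance bound $\Lambda_Y''(t)\lesssim\operatorname{Var}Y$ up to $t\approx\eta\sqrt n/(\operatorname{Var}Y)^{1/2}$. Log-concavity alone does not provide this, and the proposal does not prove it.
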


\begin{proof}
We divide the argument into two cases.

Suppose first that $M_\eta\geq1$. Since $M_\eta=\max_{x \in \eta K}\Lambda_K^\ast(x)$,
$$\eta K\subseteq B_{M_\eta}(K),$$
where
$$B_t(K)=\{x\in K:\Lambda_K^*(x)\leq t\}$$
denotes the $t$-sublevel set of the Cram\'{e}r transform.

By \cite[Proposition~2.7]{Giannopoulos-Tziotziou-2025}, there exists an absolute constant $c_1>0$ such that
$$B_{M_\eta}(K)\subseteq 2Z_{c_1M_\eta}^{+}(\mu_K)\subseteq 2Z_{c_1M_\eta}(\mu_K).$$
If
$$M_\eta\geq\frac{n}{c_1},$$
then immediately
$$M_\eta\geq c\eta^2n.$$
We may therefore assume that
$$M_\eta<\frac{n}{c_1}.$$
Using the standard volume estimate for the bodies $Z_t(\mu_K)$ (see \cite[Theorem~5.1.17]{BGVV-book}), we obtain
$$\eta \vol_n(K)^{1/n}=\operatorname{vol}_n(\eta K)^{1/n}\leq c_2\sqrt{\frac{M_\eta}{n}}\vol_n(K)^{1/n}$$
for an absolute constant $c_2>0$. Hence
$$M_\eta\geq c\eta^2n.$$

It remains to consider the case
$$M_\eta<1.$$
Applying Lemma~\ref{lem:self-conc-inequality-local} with $f=\Lambda_K^*$ and $x=0$, and using
$$\Lambda_K^*(0)=0,\quad \nabla\Lambda_K^*(0)=0, \quad \nabla^2\Lambda_K^*(0)=\Sigma^{-1},$$
where
$$\Sigma=\operatorname{Cov}(\mu_K),$$
we obtain
$$\Lambda_K^*(y)\geq\omega\left(\sqrt{\langle\Sigma^{-1}y,y\rangle}\right),$$
for every $y \in \operatorname{int}(K)$. We claim that there exists $y_0\in\eta K$ such that
$$\langle\Sigma^{-1}y_0,y_0\rangle\geq\eta^2n.$$
Indeed,
\begin{align*}
\frac{1}{\vol_n(K)}\int_K\langle\Sigma^{-1}x,x\rangle\,dx &=\frac{1}{\vol_n(K)}\int_K\operatorname{tr}\left(\Sigma^{-1}xx^T\right)\,dx\\
&=\operatorname{tr}\left(\Sigma^{-1}\frac{1}{\vol_n(K)}\int_Kxx^T\,dx\right)\\
&=\operatorname{tr}(\Sigma^{-1}\Sigma)=n.
\end{align*}
Thus, there exists $y_0\in\eta K$ such that
$$\langle\Sigma^{-1}y_0,y_0\rangle\geq\eta^2n.$$
Consequently,
$$M_\eta\geq\Lambda_K^*(y_0)\geq\omega(\eta\sqrt n),$$
as $\omega$ is non-decreasing. 

Since $M_\eta<1$, let $s_0>0$ be the unique number satisfying
$$\omega(s_0)=1.$$
Then
$$\eta\sqrt n\leq s_0.$$
Because 
$$\omega(s)\geq cs^2,\qquad 0\leq s\leq s_0,$$
we conclude that
$$M_\eta\geq c\eta^2n.$$
This completes the proof.
\end{proof}

The two preceding estimates give the following useful description of the scale of the Cram\'{e}r transform on homothetic copies of $K$:
$$c\eta^2n\leq M_\eta \leq n\log\left(\frac{1}{1-\eta}\right).$$
In particular, for every fixed $\eta\in(0,1)$,
$$M_\eta\asymp_\eta n.$$
Thus, the natural scale of $\Lambda_K^*$ in the bulk of a convex body is linear in the dimension.

We will also need a lower bound that captures the behavior of the Cram\'{e}r transform in the radial direction near the boundary.

\begin{lemma}\label{lem:boundary-star-lower-bound}
For every $\eta\in(0,1)$ and every $z\in\partial(\eta K)$,
$$\Lambda_K^*(z)\geq -\log(1-\eta)-\eta.$$ In particular, \[M_\eta \geq \max \{c\eta^2n, -\log(1-\eta)-\eta\}.\]
\end{lemma}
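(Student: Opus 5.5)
The plan is to reduce to the one-variable restriction of $\Lambda_K^*$ along the ray through $z$, and then integrate the lower bound on the second derivative supplied by the Dikin ellipsoid property of self-concordant functions. The target expression $-\log(1-\eta)-\eta$ is exactly what one gets by integrating $(1-\lambda)^{-2}$ twice from $0$, which is what suggests this route.

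\emph{Setting up the restriction.} Fix $\eta\in(0,1)$ and $z\in\partial(\eta K)$, and write $w=z/\eta\in\partial K$. Since $K$ is centered, $0\in\operatorname{int}(K)$, so $\lambda w\in\operatorname{int}(K)$ for every $\lambda\in[0,1)$. Define
$$g(\lambda)=\Lambda_K^*(\lambda w),\qquad \lambda\in[0,1).$$
Because $K$ is centered we have $\Lambda_K^*(0)=0$ and $\nabla\Lambda_K^*(0)=0$, hence $g(0)=0$ and $g'(0)=0$. By Theorem~\ref{thm:s-concave-self-concordance} with $s=1/n$, $\Lambda_K^*$ is self-concordant on $\operatorname{int}(K)$. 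Its restriction $g$ to a line is therefore a self-concordant function of one variable, with $g''(\lambda)=\langle\nabla^2\Lambda_K^*(\lambda w)w,w\rangle$.

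\emph{Key step: the Dikin ellipsoid bound.} I will use the standard fact (see \cite[Theorem~4.1.5]{book-optim}) that for a self-concordant $f$ on $\operatorname{int}(K)$ the open Dikin ellipsoid
$$\{y:\langle\nabla^2f(x)(y-x),y-x\rangle<1\}$$
is contained in $\operatorname{int}(K)$. Apply this at $x=\lambda w$ with $y=w\notin\operatorname{int}(K)$. Since $y-x=(1-\lambda)w$, this gives
$$(1-\lambda)^2\,g''(\lambda)\geq 1,\qquad\text{that is,}\qquad g''(\lambda)\geq\frac{1}{(1-\lambda)^2},\quad \lambda\in[0,1).$$
This is the only nontrivial input, and the main point to get right is that this containment follows from self-concordance alone and does not require the barrier parameter. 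One could alternatively derive it directly from the third-derivative inequality: it gives $\bigl|\tfrac{d}{d\lambda}g''(\lambda)^{-1/2}\bigr|\leq 1$, and since $g''\to\infty$ as $\lambda\to1$ (barrier behaviour of $\Lambda_K^*$, hence of $g$), one gets $g''(\lambda)^{-1/2}\leq 1-\lambda$.

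\emph{Integration.} Using $g(0)=g'(0)=0$, integrating twice gives
$$g(\eta)\geq\int_0^\eta\int_0^\lambda\frac{d\mu}{(1-\mu)^2}\,d\lambda=\int_0^\eta\Bigl(\frac{1}{1-\lambda}-1\Bigr)d\lambda=-\log(1-\eta)-\eta.$$
Since $g(\eta)=\Lambda_K^*(z)$, this is the claimed bound. The final assertion then follows because $z\in\eta K$ gives $M_\eta\geq\Lambda_K^*(z)$, and combining this with Lemma~\ref{lem:star-max-lower-bound} yields $M_\eta\geq\max\{c\eta^2n,\,-\log(1-\eta)-\eta\}$.
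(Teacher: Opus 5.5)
Your proposal is correct and follows the paper's proof essentially verbatim: you restrict $\Lambda_K^*$ to the segment $[0,w)$, apply the Dikin-ellipsoid containment from \cite[Theorem~4.1.5]{book-optim} with $y=w\notin\operatorname{int}(K)$ to get $g''(\lambda)\geq(1-\lambda)^{-2}$, and integrate twice using $g(0)=g'(0)=0$. Your alternative route via $\bigl|\tfrac{d}{d\lambda}g''(\lambda)^{-1/2}\bigr|\leq1$ also works, with one step to add: the barrier property gives $g\to\infty$, not $g''\to\infty$ directly. The fix is that the $1$-Lipschitz function $(g'')^{-1/2}$ has a limit at $1$, and this limit must be $0$, since otherwise $g''$, and hence $g$, would stay bounded near $1$.
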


\begin{proof}
Write
$$z=\eta y,\qquad y\in\partial K.$$
We first establish a one-dimensional estimate along the segment $[0,y)$. For $t\in[0,1)$, set
$$g(t)=\Lambda_K^*(ty).$$
By the standard Hessian characterization of the domain of a self-concordant barrier, we have
$$(1-t)^2\left\langle\nabla^2\Lambda_K^*(ty)y,y\right\rangle\geq 1.$$
Indeed, if this inequality failed, the corresponding self-concordance estimate (see \cite[Theorem~4.1.5, 1.]{book-optim}) would imply that
$y\in\operatorname{int}(K)$, contradicting $y\in\partial K$.

Therefore,
$$g''(t)=\left\langle\nabla^2\Lambda_K^*(ty)y,y\right\rangle\geq\frac{1}{(1-t)^2}.$$
Since
$$g(0)=0,\qquad g'(0)=0,$$
integrating twice gives
$$g(\eta)\geq \int_0^\eta\frac{t}{1-t}\,dt=-\log(1-\eta)-\eta.$$
Since $g(\eta)=\Lambda_K^*(z)$, the result follows.
\end{proof}

The upper and lower bounds above exhibit two distinct regimes. In the bulk, $\Lambda_K^*$ is naturally of order $n$, while its growth along
rays toward the boundary is controlled by the logarithmic singularity
$$-\log(1-\eta).$$

We record one further consequence for the Cram\'{e}r sublevel sets. For $p>0$, let
$$B_p(K)=\{x\in K:\Lambda_K^*(x)\leq p\}.$$
The upper bound in Lemma~\ref{lem:star-max-upper-bound} implies the following explicit inner inclusion:
$$\left(1-e^{-p/n}\right)K\subseteq B_p(K).$$
Indeed, if
$$x\in\left(1-e^{-p/n}\right)K,$$
then
$$\|x\|_K\leq1-e^{-p/n},$$
and hence
$$\Lambda_K^*(x)\leq n\log\left(\frac{1}{1-\|x\|_K}\right)\leq p.$$

Combining this with the comparison between half-space depth and the Cram\'{e}r transform from Theorem~\ref{thm:q-lower-bound-star}, we also obtain
$$\left(1- \left(\frac{e^{-p}\sqrt{n}}{C}\right)^{1/n}\right)K\subseteq T_p(K),$$
whenever the scalar factor on the left-hand side is non-negative. Thus, the radial geometry of the depth regions can be controlled directly through the Cram\'{e}r transform.

\section{Moments of the Cram\'{e}r transform}\label{sec:moments-cramer-transform}

We next study the size of the Cram\'{e}r transform in $L^p$. The preceding section shows that, for a convex body, $\Lambda_K^*$ is of order $n$ in
the bulk, while it can grow logarithmically in the inverse distance to the boundary. Since points chosen uniformly from a high-dimensional
convex body typically lie close to its boundary, this boundary behavior is relevant for the moments of $\Lambda_K^*$.

We begin with a general upper bound which is uniform over all convex bodies.

\begin{proposition}\label{prop:convex-body-moments}
Let $K$ be a convex body in $\mathbb{R}^n$. Then, for every $p\geq 1$,
$$\|\Lambda_K^*\|_{L^p(\mu_K)}\leq Cn(\log n+p),$$
where $C>0$ is an absolute constant.
\end{proposition}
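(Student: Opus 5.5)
Since the statement is affinely invariant, we may assume that $K$ is centered. The natural route goes through the radial upper bound of Lemma~\ref{lem:star-max-upper-bound}:
$$\Lambda_K^*(x)\leq n\log\left(\frac{1}{1-\|x\|_K}\right),\qquad x\in\operatorname{int}(K).$$
This reduces the problem to the distribution of the gauge $\|X\|_K$ for $X\sim\mu_K$. That distribution is explicit and independent of $K$. Since $\mu_K(\{\|x\|_K\leq r\})=\vol_n(rK)/\vol_n(K)=r^n$, the variable $R=\|X\|_K$ has density $nr^{n-1}$ on $[0,1]$, i.e. $R\sim\mathrm{Beta}(n,1)$. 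We therefore obtain
$$\|\Lambda_K^*\|_{L^p(\mu_K)}\leq n\,\bigl\|\log\tfrac{1}{1-R}\bigr\|_{L^p},$$
and it suffices to show that $\|\log\frac{1}{1-R}\|_{L^p}\leq C(\log n+p)$.

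Set $W=-\log(1-R)$. Then $1-R$ has density $n(1-u)^{n-1}\leq n$ on $[0,1]$. This gives the tail bound
$$\mathbb{P}(W>w)=\mathbb{P}(1-R<e^{-w})\leq ne^{-w},$$
so $W\leq \log n + E$ in the sense of stochastic domination, where $E$ is a standard exponential variable. Indeed, $\mathbb{P}(W-\log n>u)\leq e^{-u}$ for $u\geq0$.

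By Minkowski's inequality,
$$\|W\|_{L^p}\leq \log n+\|E\|_{L^p}=\log n+\Gamma(p+1)^{1/p}\leq \log n+p.$$
Multiplying by $n$ gives the claimed bound $Cn(\log n+p)$, even with $C=1$. The tail form also follows directly:
$$\mu_K\bigl(\{\Lambda_K^*>n(\log n+u)\}\bigr)\leq \mathbb{P}(W>\log n+u)\leq e^{-u}.$$

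There is no real obstacle here. The only point needing care is the justification of the centering reduction: Lemma~\ref{lem:star-max-upper-bound} assumes that $K$ is centered, and the paper has already noted that expectations of functions of $\Lambda_K^*$ are affinely invariant, so this is legitimate. A second point is that the Beta computation uses $0\in\operatorname{int}(K)$, which holds for a centered body.
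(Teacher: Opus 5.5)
Your proof is correct and follows the paper's route: both reduce, via Lemma~\ref{lem:star-max-upper-bound}, to the law of $R=\|X\|_K$, which has density $nr^{n-1}$ on $[0,1]$ (this is the paper's polar integration), and then bound $\|\log\frac{1}{1-R}\|_{L^p}$. The only difference is in that last one-dimensional estimate: the paper splits the integral at $u=1/n$, whereas you use the tail bound $\mathbb{P}(W>w)\leq ne^{-w}$ and stochastic domination by $\log n+E$ (the device the paper itself uses for the Euclidean ball), which gives $C=1$ and the tail form directly.
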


\begin{proof}
Due to affine invariance, we can assume that $K$ is centered and with volume $1$. By Lemma~\ref{lem:star-max-upper-bound},
$$\Lambda_K^*(x)\leq n\log\left(\frac{1}{1-\|x\|_K}\right),\qquad x\in K.$$
We use polar integration with respect to the Minkowski functional:
$$\int_K f(\|x\|_K)\,dx=n\vol_n(K)\int_0^1 r^{n-1}f(r)\,dr$$
for every non-negative measurable function $f$. Since $\vol_n(K)=1$
$$\|\Lambda_K^*\|_{L^p(\mu_K)}^p\leq n^{p+1}\int_0^1r^{n-1}\log^p\left(\frac{1}{1-r}\right)\,dr.$$
Making the change of variables $u=1-r$, we obtain
$$\|\Lambda_K^*\|_{L^p(\mu_K)}^p\leq n^{p+1}\int_0^1(1-u)^{n-1}\log^p\left(\frac{1}{u}\right)\,du.$$
Splitting the integral at $u=1/n$ and using $(1-u)^{n-1}\leq 1$, we get
\begin{align*}
\|\Lambda_K^*\|_{L^p(\mu_K)}^p
&\leq n^{p+1}\int_0^{1/n}\log^p\left(\frac{1}{u}\right)\,du\\
&\quad + n^{p+1}\int_{1/n}^1(1-u)^{n-1}\log^p\left(\frac{1}{u}\right)\,du\\
&\leq n^{p+1}\int_0^{1/n}\log^p\left(\frac{1}{u}\right)\,du +(n\log n)^p.
\end{align*}
In the first integral, put $r=nu$. Then
$$n^{p+1}\int_0^{1/n}\log^p\left(\frac{1}{u}\right)\,du= n^p\int_0^1\left(\log n+\log\frac{1}{r}\right)^p\,dr.$$
By Minkowski's inequality,
\begin{align*}
\left(\int_0^1\left(\log n+\log\frac{1}{r}\right)^p\,dr\right)^{1/p}
&\leq\log n +\left(\int_0^1\log^p\frac{1}{r}\,dr\right)^{1/p}\\
&=\log n+\Gamma(p+1)^{1/p}.
\end{align*}
The standard estimate
$$\Gamma(p+1)^{1/p}\leq Cp,\qquad p\geq 1,$$
therefore gives
$$\|\Lambda_K^*\|_{L^p(\mu_K)}\leq Cn(\log n+p).$$
\end{proof}

The dependence on both $n$ and $p$ in Proposition~\ref{prop:convex-body-moments} is optimal, as can already be seen for Euclidean balls.

\begin{proposition}\label{prop:ball-cramer-moments}
For every $p\geq 1$ we have that
$$\|\Lambda_{B_2^n}^*\|_{L^p(\mu_{B_2^n})}\asymp n(\log n+p),$$
with absolute implicit constants.
\end{proposition}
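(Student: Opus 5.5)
The upper bound $\|\Lambda_{B_2^n}^*\|_{L^p(\mu_{B_2^n})}\leq Cn(\log n+p)$ is the special case $K=B_2^n$ of Proposition~\ref{prop:convex-body-moments}. The work is the matching lower bound, which we derive from the explicit lower estimate \eqref{eq:unit-ball-star-bounds}:
$$\Lambda_{B_2^n}^*(x)\geq -\frac n2\log(1-\|x\|_2^2)\geq \frac n2\log\frac{1}{1-\|x\|_2}\cdot\frac{\log\frac{1}{1-\|x\|_2^2}}{\log\frac1{1-\|x\|_2}}.$$
Simpler still, since $1-r^2=(1-r)(1+r)\leq 2(1-r)$, we have
$$\Lambda_{B_2^n}^*(re_1)\geq \frac n2\log\frac{1}{1-r}-\frac n2\log 2 .$$
It is cleaner to use this together with the trivial bound $\Lambda^*\geq 0$. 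By polar integration, $\|x\|_2$ has density $nr^{n-1}$ on $[0,1]$. So with $u=1-\|x\|_2$, the variable $U=u$ has law $n(1-u)^{n-1}du$, and $nU$ is approximately $\mathrm{Exp}(1)$. We therefore need
$$\left(\E\Bigl[\bigl(\tfrac12\log\tfrac1U-\tfrac12\log2\bigr)_+^p\Bigr]\right)^{1/p}\geq c(\log n+p).$$

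We treat the two contributions separately, using $\|f\|_p\geq \max(\cdot,\cdot)\geq$ half the sum.

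\textbf{The $\log n$ term.} Note that $\mathbb P(U\leq 1/n)=1-(1-1/n)^n\geq 1-e^{-1}$. On this event, $\log(1/U)\geq \log n$, so $\|\Lambda^*\|_p\geq \|\Lambda^*\|_1\geq (1-e^{-1})\frac n2(\log n-\log 2)$. This is at least $cn\log n$ for $n\geq 3$; small $n$ is handled by the $p$ term below, since there $\log n\lesssim 1\leq p$.

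\textbf{The $p$ term.} Restrict to $U\leq 1/(4n)$. On $[0,1/(4n)]$ the density $n(1-u)^{n-1}$ is at least $n(1-\frac1{4n})^{n-1}\geq n/2$. Hence
$$\E\Bigl[\bigl(\tfrac12\log\tfrac1{2U}\bigr)_+^p\Bigr]\geq \frac n2\int_0^{1/(4n)}\Bigl(\tfrac12\log\tfrac1{2u}\Bigr)^p du .$$
Substitute $2u=e^{-v}$, so the range is $v\geq \log(2n)$ and the integral equals $\frac n4\,2^{-p}\int_{\log 2n}^\infty v^pe^{-v}dv$. Using $v^p\geq (v-\log 2n)^p$ and shifting, this is at least $\frac n4\,2^{-p}\frac{1}{2n}\Gamma(p+1)=\frac18\,2^{-p}\Gamma(p+1)$. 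Taking $p$-th roots and using Stirling, $\Gamma(p+1)^{1/p}\geq p/e$, we get $\|\Lambda^*\|_p\geq n\cdot c\,p$.

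Adding the two lower bounds gives $\|\Lambda_{B_2^n}^*\|_{L^p}\geq c\,n(\log n+p)$. The only mildly delicate point is that the constant subtracted terms ($\log 2$) must not spoil the estimates. This is why, in the $p$ term, we restrict to very small $U$, so that the $\log(1/U)$ part dominates, and why small $n$ is absorbed into the $p$ term.
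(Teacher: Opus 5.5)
Your proof is correct and follows essentially the paper's route: both rest on the Bessel lower bound $\Lambda_{B_2^n}^*(re_1)\geq\frac n2\log\frac{1}{1-r^2}$ and reduce to moments of a logarithmic function of the radial variable with density $nr^{n-1}$. The differences are only in execution. You cite Proposition~\ref{prop:convex-body-moments} for the upper bound and extract the $\log n$ and $p$ contributions separately. The paper instead uses the exact tail $\mathbb{P}(Y>t)=1-(1-e^{-t})^{n/2}$ of $Y=-\log(1-R^2)$ for both directions, and gets $\log n+p$ at once from a single Markov bound at level $t=\log(n/2)+p$.
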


\begin{proof}
By rotational invariance,
$$\Lambda_{B_2^n}^*(x)=\Lambda_{X_1}^*(\|x\|_2),$$
where $X$ is uniformly distributed on $B_2^n$.

The estimates obtained from the Bessel-function representation of the Cram\'{e}r transform in \eqref{eq:unit-ball-star-bounds} 
give, uniformly for $0\leq r<1$,
$$\frac{n}{2}\log\frac{1}{1-r^2}\leq\Lambda_{B_2^n}^*(re_1)\leq\frac{n+2}{2}\log\frac{1}{1-r^2}.$$
Consequently,
$$\|\Lambda_{B_2^n}^*\|_{L^p(\mu_{B_2^n})}\asymp n\left(\int_0^1nr^{n-1}\log^p\left(\frac{1}{1-r^2}\right)\,dr\right)^{1/p}.$$
Let $R$ be a random variable with density
$$f_R(r)=nr^{n-1}\mathbf{1}_{(0,1)}(r),$$
and define
$$Y=-\log(1-R^2).$$
Then
$$\|\Lambda_{B_2^n}^*\|_{L^p(\mu_{D_n})}\asymp n\|Y\|_{L^p}.$$
Moreover, for $t>0$,
$$\mathbb{P}(Y>t)=1-(1-e^{-t})^{n/2}.$$
Using $1-(1-u)^m\leq mu$ for $u\in[0,1]$, we obtain
$$\mathbb{P}(Y>t)\leq\frac{n}{2}e^{-t}.$$
Thus
$$Y\preceq_{\mathrm{st}}\log(n/2)+E,$$
where $E$ is an exponential random variable with parameter $1$. Consequently,
$$\|Y\|_{L^p}\leq\log(n/2)+\|E\|_{L^p}\leq C(\log n+p).$$
For the reverse inequality, if
$$t\geq\log(n/2),$$
then $ne^{-t}/2\leq 1$, and the elementary estimate
$$1-(1-u)^m\geq\frac{mu}{2},\qquad 0\leq mu\leq 1,$$
gives
$$\mathbb{P}(Y>t)\geq\frac{n}{4}e^{-t}.$$
Taking
$$t=\log(n/2)+p$$
and using Markov's inequality we obtain
$$\|Y\|_{L^p}\geq t\,\mathbb{P}(Y>t)^{1/p},$$
which implies that
$$\|Y\|_{L^p}\geq\frac{1}{2e}\bigl(\log(n/2)+p\bigr).$$
Therefore,
$$\|Y\|_{L^p}\asymp\log n+p,$$
and the proposition follows.
\end{proof}

The preceding result identifies the natural scale of the moments: the contribution of the typical distance to the boundary is responsible
for the $\log n$ term, while the exponential tail of the boundary singularity produces the linear dependence on $p$.

It is also useful to record an exponential moment estimate. For convex bodies, the upper bound on $\Lambda_K^*$ gives a particularly explicit
calculation.

\begin{proposition}\label{prop:convex-body-exponential-moment}
Let $K$ be a body in $\mathbb{R}^n$. Then, for every $a\in(0,1)$,
$$\int_K\exp\left(\frac{a}{n}\Lambda_K^*(x)\right)\,dx\leq 2^a\Gamma(1-a)n^a.$$
In particular,
$$\mathbb{E}_{\mu_K}\left[\exp\left(\frac{a}{n}\Lambda_K^*(X)\right)\right]\leq C(a)n^a.$$
\end{proposition}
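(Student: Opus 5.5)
The plan is to repeat the argument of Proposition~\ref{prop:convex-body-moments}, with the power $\log^p$ replaced by an exponential. By affine invariance we may assume that $K$ is centered and has $\vol_n(K)=1$; then $\int_K$ coincides with $\mathbb{E}_{\mu_K}$, so the second assertion follows from the first. The first input is Lemma~\ref{lem:star-max-upper-bound}, which gives
$$\exp\left(\frac{a}{n}\Lambda_K^*(x)\right)\leq (1-\|x\|_K)^{-a},\qquad x\in\operatorname{int}(K).$$
Polar integration with respect to the Minkowski functional then yields
$$\int_K\exp\left(\frac{a}{n}\Lambda_K^*(x)\right)dx\leq n\int_0^1 r^{n-1}(1-r)^{-a}\,dr=n\,B(n,1-a)=\frac{\Gamma(n+1)\Gamma(1-a)}{\Gamma(n+1-a)}.$$
The integral is finite precisely because $a<1$.

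It then remains to show that $\Gamma(n+1)/\Gamma(n+1-a)\leq 2^a n^a$. Gautschi's inequality gives $\Gamma(x+1)/\Gamma(x+1-a)\leq (x+1)^a$ with $x=n$. Since $n+1\leq 2n$, this yields $(n+1)^a\leq 2^a n^a$, which is exactly the claimed constant. Alternatively, one can use log-convexity of $\Gamma$: writing $n+1-a$ as a convex combination of $n$ and $n+1$ gives $\Gamma(n+1-a)\geq\Gamma(n+1)\,n^{-a}$, hence even the bound $n^a$ without the factor $2^a$. Either route suffices.

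The only step requiring care is this Gamma-ratio estimate, and it is elementary. The constant is $C(a)=2^a\Gamma(1-a)$, which blows up as $a\to1$, as expected from the logarithmic boundary singularity.
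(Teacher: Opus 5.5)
Your main argument is correct and is the paper's proof step by step: the pointwise bound $\exp(\frac{a}{n}\Lambda_K^*(x))\leq(1-\|x\|_K)^{-a}$ from Lemma~\ref{lem:star-max-upper-bound}, then polar integration giving $nB(n,1-a)=\Gamma(1-a)\Gamma(n+1)/\Gamma(n+1-a)$, and finally $\Gamma(n+1)/\Gamma(n+1-a)\leq(n+1)^a\leq 2^an^a$.

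The log-convexity aside, however, is wrong and should be removed. From $n+1-a=a\cdot n+(1-a)(n+1)$, log-convexity gives $\Gamma(n+1-a)\leq\Gamma(n)^a\Gamma(n+1)^{1-a}=\Gamma(n+1)n^{-a}$. This is the reverse of what you wrote, and it yields $\Gamma(n+1)/\Gamma(n+1-a)\geq n^a$. The lower half of Gautschi's inequality shows the ratio is in fact strictly larger than $n^a$; already for $n=1$ it equals $1/\Gamma(2-a)>1$. So your claimed bound by $n^a$ alone is false, and "either route suffices" is not right. A correct log-convexity argument interpolates at $n+1=a(n+2-a)+(1-a)(n+1-a)$ instead, which gives Wendel's bound $\Gamma(n+1)\leq(n+1-a)^a\,\Gamma(n+1-a)$.

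One further point, shared with the paper: affine invariance does not justify the normalization $\vol_n(K)=1$ for the unnormalized integral $\int_K\cdots dx$, since that integral scales with $\vol_n(K)$. The first inequality should therefore be read for bodies of volume one. The expectation bound is genuinely affine invariant and holds for every $K$.
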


\begin{proof}
We can assume that $K$ is centered with volume $1$. By Lemma~\ref{lem:star-max-upper-bound},
$$\Lambda_K^*(x)\leq n\log\left(\frac{1}{1-\|x\|_K}\right).$$
Therefore, using polar integration,
\begin{align*}
\int_K\exp\left(\frac{a}{n}\Lambda_K^*(x)\right)\,dx
&\leq n\int_0^1r^{n-1}(1-r)^{-a}\,dr\\
&=nB(n,1-a).
\end{align*}
The beta-gamma identity gives
$$nB(n,1-a)=\Gamma(1-a)\frac{\Gamma(n+1)}{\Gamma(n+1-a)}.$$
Using the standard estimate
$$\frac{\Gamma(n+1)}{\Gamma(n+1-a)}\leq (n+1)^a\leq 2^a n^a,$$
we obtain
$$\int_K\exp\left(\frac{a}{n}\Lambda_K^*(x)\right)\,dx\leq 2^a\Gamma(1-a)n^a.$$
\end{proof}

For the Euclidean ball, the preceding estimate is not sharp in its dependence on $n$. Indeed, the lower estimate for the Cram\'{e}r transform
gives a non-trivial lower bound with a smaller power of $n$.

\begin{remark}\label{rem:ball-exponential-moment}\rm 
The lower bound
$$\Lambda_{B_2^n}^*(re_1)\geq\frac{n}{2}\log\left(\frac{1}{1-r^2}\right)$$
implies that, for every $a\in(0,1)$,
$$\mathbb{E}_{\mu_{B_2^n}}\left[\exp\left(\frac{a}{n}\Lambda_{B_2^n}^*(X)\right)\right]\geq C(a)n^{a/2}.$$
Thus the exponential moment of $\Lambda_K^*/n$ can grow polynomially in the dimension, even for the Euclidean ball.
\end{remark}

We next turn to general log-concave probability measures. The estimates above are based on the fact that the support is a convex body and hence
do not directly apply when the measure has unbounded support. A different argument, based on appropriate high-probability convex
superlevel sets of the density, gives the following uniform bound. 

The first part of the proof of the following proposition is essentially \cite[Lemma~3.5]{Giannopoulos-Tziotziou-2025}, which was the main ingredient in proving the finiteness of all moments of $\Lambda_\mu^\ast$.

\begin{proposition}\label{prop:log-concave-moments}
There exists an absolute constant $C>0$ such that, for every log-concave probability measure $\mu$ on $\mathbb{R}^n$ and every $u\geq 0$,
$$\mu\left(\left\{x:\Lambda_\mu^*(x)>Cn(\log n+u)\right\}\right)\leq e^{-u}.$$
Consequently, for every $p\geq 1$,
$$\|\Lambda_\mu^*\|_{L^p(\mu)}\leq C_1n(\log n+p),$$
where $C_1>0$ is an absolute constant.
\end{proposition}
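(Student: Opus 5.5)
By affine invariance we may assume $\mu$ is isotropic, with density $f$. The idea is to replace the convex body $K$ of Proposition~\ref{prop:convex-body-moments} by a convex superlevel set of $f$ that carries almost all of the mass. For $\beta>0$ set
$$K_\beta=\{x: f(x)\geq e^{-\beta n}\|f\|_\infty\}.$$
This is a convex body. Standard log-concave estimates give $\mu(\mathbb{R}^n\setminus K_\beta)\leq e^{-c\beta n}$ once $\beta\geq C_0$; one way to see this is to integrate $f$ in polar coordinates around its maximum point and use log-concavity along rays. On $K_\beta$ the density satisfies $e^{-\beta n}\|f\|_\infty\leq f\leq \|f\|_\infty$.

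The first step, essentially \cite[Lemma~3.5]{Giannopoulos-Tziotziou-2025}, is a pointwise comparison of $\Lambda_\mu$ with $\Lambda_{K_\beta}$. For every $\theta$,
$$e^{\Lambda_\mu(\theta)}\geq \int_{K_\beta}e^{\langle\theta,y\rangle}f(y)\,dy\geq e^{-\beta n}\|f\|_\infty\vol_n(K_\beta)\,e^{\Lambda_{K_\beta}(\theta)}.$$
Since $\|f\|_\infty\vol_n(K_\beta)\geq\mu(K_\beta)\geq 1/2$, this gives
$$\Lambda_\mu^*(x)\leq\Lambda_{K_\beta}^*(x)+\beta n+\log 2.$$
Combining with Lemma~\ref{lem:star-max-upper-bound}, applied to $K_\beta$ recentred at its barycenter $b_\beta$, we get for $x\in\operatorname{int}(K_\beta)$
$$\Lambda_\mu^*(x)\leq \beta n+\log 2+n\log\frac{1}{1-\|x-b_\beta\|_{K_\beta-b_\beta}}.$$

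The second step controls the last term under $\mu$ restricted to $K_\beta$. Because $f\leq\|f\|_\infty$ and $\|f\|_\infty\vol_n(K_\beta)\leq e^{\beta n}$, we have, for the radial variable $r=\|x-b_\beta\|_{K_\beta-b_\beta}$,
$$\mu\bigl(x\in K_\beta: r>1-\varepsilon\bigr)\leq\|f\|_\infty\vol_n(K_\beta)\bigl(1-(1-\varepsilon)^n\bigr)\leq e^{\beta n}n\varepsilon.$$
To make this at most $e^{-u}$ we need $\log(1/\varepsilon)\approx \beta n+\log n+u$. That choice makes the last term $n\log(1/\varepsilon)$ of order $n^2$, which is too large.

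This is the main obstacle: the crude bound $\|f\|_\infty\vol_n(K_\beta)\leq e^{\beta n}$ is far too lossy. The remedy is to compare $\mu|_{K_\beta}$ with the uniform measure on $K_\beta$ more carefully. One option is to use the convexity of $-\log f$ along rays from $b_\beta$: the density near the boundary of $K_\beta$ is at most $e^{-\beta n}\|f\|_\infty\cdot e^{O(\beta n)(1-r)}$ relative to the interior. An alternative is to choose $\beta$ as a function of $u$, taking $\beta\asymp 1+u/n$. Then the mass outside $K_\beta$ is at most $e^{-c(n+u)}\leq e^{-u}/2$, while inside $K_\beta$ the radial density of $r$ is at most $\|f\|_\infty\vol_n(K_\beta)\,n r^{n-1}$. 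One then bounds $\|f\|_\infty\vol_n(K_\beta)\leq C^n$ for isotropic $\mu$ with $\beta\leq C$, using $\|f\|_\infty\leq e^{Cn}L_\mu^{-n}$ and $\vol_n(K_\beta)^{1/n}\lesssim \beta\sqrt n$. The factor $e^{Cn}$ then costs only a log-depth of order $n$ in the threshold, which yields
$$\mu\bigl(\Lambda_\mu^*>Cn(\log n+u)\bigr)\leq e^{-u}$$
after adjusting constants. I would first try this second option, choosing $\beta=C(1+u/n)$ and checking that both error terms are of order $n(\log n+u)$.

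Finally, the $L^p$ bound follows from the tail estimate by integrating. Write $\Lambda_\mu^*\leq Cn\log n+CnE$, where $\mathbb{P}(E>u)\leq e^{-u}$. Then $\|E\|_{L^p}\leq\Gamma(p+1)^{1/p}\leq Cp$, exactly as in Proposition~\ref{prop:convex-body-moments}.
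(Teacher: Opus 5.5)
\paragraph{Verdict: genuine gap in the second step.}
Your first step is fine. The bound $\Lambda_\mu^*\le\Lambda_{K_\beta}^*+\beta n+\log 2$, combined with Lemma~\ref{lem:star-max-upper-bound}, is a valid pointwise estimate and a reasonable substitute for the paper's half-space-depth bound. The gap is in the second step, and the option you say you would try first cannot close it.

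Any comparison of $\mu|_{K_\beta}$ with the uniform measure on $K_\beta$ via $f\le\|f\|_\infty$ carries the factor $\|f\|_\infty\vol_n(K_\beta)$. This factor is exponentially large in general. For the standard Gaussian, $K_\beta=\sqrt{2\beta n}\,B_2^n$ and $\|f\|_\infty\vol_n(K_\beta)\asymp(2e\beta)^{n/2}n^{-1/2}$.

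Your claim that a factor $e^{Cn}$ ``costs only a log-depth of order $n$'' is exactly where the argument breaks. To make $e^{Cn}n\varepsilon\le e^{-u}$ you need $\log(1/\varepsilon)\ge Cn+u+\log n$. Your pointwise bound is $n\log\frac{1}{1-r}$, so on the good set it becomes $\beta n+\log 2+n\log(1/\varepsilon)$, which is at least of order $n^2$, not $O(n(\log n+u))$. In addition, with $\beta=C(1+u/n)$ the parameter $\beta$ is not bounded, so even the $e^{Cn}$ bound acquires an extra factor $e^{u}$.

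Your ``first option'' rests on a false claim as well. The density need not be small near $\partial K_\beta$: for the uniform measure on a convex body, $f\equiv\|f\|_\infty$ on all of $K_\beta$.

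\paragraph{The missing idea.}
You need an estimate of the $\mu$-mass of the boundary shell that does not pass through the uniform measure. Suppose $f(z)\ge e^{-n}\|f\|_\infty$; for example $z=0$ when $\mu$ is isotropic, by Fradelizi's inequality. Log-concavity along rays from $z$ gives $f(z+\lambda y)\le e^{n(\lambda-1)}f(z+y)$ for $\lambda\ge1$. Hence $\mu(z+\lambda(A-z))\le e^{2n(\lambda-1)}\mu(A)$.

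So a shell of relative thickness $\delta$, measured from $z$, has mass at most about $4n\delta$, with no exponential prefactor. The choice $\delta\asymp e^{-u}/n$ then costs only $n\log(1/\delta)=nu+O(n\log n)$. This is precisely the role of \cite[Lemma~3.1]{Giannopoulos-Tziotziou-2025} in the paper.

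In your setup you would also have to reconcile centres. Lemma~\ref{lem:star-max-upper-bound} measures depth from the barycentre $b_\beta$ of $K_\beta$, not from $z$. This can be handled by comparing the two gauges, using $cB_2^n\subseteq K_\beta$ and a crude bound on the diameter of $K_\beta$, at a cost absorbable into $Cn(\log n+u)$.

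The paper sidesteps the centre issue entirely. For $x\in(1-\delta)R_t(\mu)$ it has $B(x,c\delta)\subseteq R_t(\mu)$. It bounds $q_\mu(x)$ from below by the mass of half of that ball, and then uses $\Lambda_\mu^*\le-\log q_\mu$.
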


\begin{proof}
After an affine transformation, we may assume that $\mu$ is isotropic. Let $$R_t(\mu)=\{x: f_\mu(x) \geq e^{-t}f_\mu(0)\},$$ where $f_\mu$ is the log-concave density of $\mu$.  By
\cite[Proposition~2.1, Lemma~3.4]{Giannopoulos-Tziotziou-2025}, for $t\geq 20n$,
$$\mu(R_t(\mu))\geq 1-e^{-t/4},\qquad cB_2^n\subseteq R_t(\mu),$$
where $c>0$ is an absolute constant.

Fix $0<\delta<1/2$. If
$$x\in(1-\delta)R_t(\mu),$$
then convexity gives
$$B(x,c\delta)\subseteq R_t(\mu).$$
Every half-space passing through $x$ therefore contains at least half of this ball. Thus, 
$$\mu(H) \geq \int_{H \cap B(x,c\delta)}f_\mu(x) \, dx \geq e^{-t}f_\mu(0)c^n\delta^n\vol_n(B_2^n).$$ 
Taking infimum over all those half-spaces yields 
$$q_\mu(x) \geq e^{-t}f_\mu(0)c^n\delta^n\vol_n(B_2^n).$$ 
For an isotropic log-concave probability measure,
$$f_\mu(0)\geq c_1^n$$
for an absolute constant $c_1>0$. Since
$$\vol_n(B_2^n)\geq\left(\frac{c_2}{\sqrt n}\right)^n,$$
we obtain
$$q_\mu(x)\geq e^{-t}\left(\frac{c_3\delta}{\sqrt n}\right)^n.$$
By the general comparison
$$\Lambda_\mu^*(x)\leq-\log q_\mu(x),$$
it follows that
$$\Lambda_\mu^*(x)\leq t+\frac{n}{2}\log(c_4n)+n\log\frac{1}{\delta}.$$
Thus, for
$$t=20n+4u,\qquad \delta=\frac{e^{-u}}{8n},$$
there exists an absolute constant $C>0$ such that
$$\Lambda_\mu^*(x)\leq Cn(\log n+u)$$
throughout $(1-\delta)R_t(\mu)$.

It remains to estimate the measure of this set. Put
$$\kappa=\frac{\delta}{1-\delta}.$$
Then
$$(1+\kappa)(1-\delta)R_t(\mu)=R_t(\mu).$$
By the standard homothetic estimate for log-concave measures (see \cite[Lemma~3.1]{Giannopoulos-Tziotziou-2025}),
$$\mu(R_t(\mu))\leq e^{2n\kappa}\mu((1-\delta)R_t(\mu)).$$
Since $\kappa\leq 2\delta$, we obtain
$$\mu((1-\delta)R_t(\mu))\geq e^{-4n\delta}\bigl(1-e^{-t/4}\bigr).$$
For our choice of $t$ and $\delta$,
$$4n\delta=\frac{e^{-u}}{2}$$
and
$$e^{-t/4}=e^{-5n-u}.$$
Therefore,
$$\mu((1-\delta)R_t(\mu))\geq e^{-e^{-u}/2}\left(1-e^{-5n-u}\right)\geq 1-e^{-u}.$$
Hence
$$\mu\left(\left\{x:\Lambda_\mu^*(x)>Cn(\log n+u)\right\}\right)\leq e^{-u}.$$

The last estimate implies that
$$\Lambda_\mu^*(X)\preceq_{\mathrm{st}} Cn(\log n+E)$$
where $E$ is an exponential random variable with parameter $1$. Hence
$$\|\Lambda_\mu^*\|_{L^p(\mu)}\leq Cn\bigl(\log n+\|E\|_{L^p}\bigr)\leq C_1n(\log n+p).$$
\end{proof}

The preceding proposition shows that the moment estimate
$$\|\Lambda_\mu^*\|_{L^p(\mu)}\lesssim n(\log n+p)$$
extends from uniform measures on convex bodies to arbitrary log-concave probability measures. We can obtain a corresponding
exponential integrability statement.

\begin{proposition}\label{prop:log-concave-exponential-moment}
Let $\mu$ be a log-concave probability measure on $\mathbb{R}^n$. Then, for every $a\in(0,1/5)$,
$$\mathbb{E}_\mu\left[\exp\left(\frac{a}{n}\Lambda_\mu^*\right)\right]\leq C(a)n^{3a/2},$$
where $C(a)>0$ depends only on $a$.

More precisely,
$$\|\Lambda_\mu^*\|_{L^{\psi_1}(\mu)}\leq Cn\log n.$$
\end{proposition}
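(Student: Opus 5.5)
The plan is to feed the tail bound of Proposition~\ref{prop:log-concave-moments} into the layer-cake formula. By that proposition there is an absolute constant $C_0$ with
$$\mu\bigl(\{\Lambda_\mu^*>C_0n(\log n+u)\}\bigr)\leq e^{-u},\qquad u\geq0,$$
that is, $\Lambda_\mu^*(X)\preceq_{\mathrm{st}}C_0n(\log n+E)$ with $E$ a standard exponential variable. Since $y\mapsto\exp(ay/n)$ is increasing, stochastic domination gives
$$\mathbb{E}_\mu\Bigl[\exp\Bigl(\frac{a}{n}\Lambda_\mu^*\Bigr)\Bigr]\leq e^{aC_0\log n}\,\mathbb{E}\bigl[e^{aC_0E}\bigr]=\frac{n^{aC_0}}{1-aC_0},$$
provided $aC_0<1$. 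The $\psi_1$ statement follows directly. Indeed, the domination gives $\mathbb{E}\exp(\Lambda_\mu^*/\lambda)\leq n^{C_0n/\lambda}\,\mathbb{E}e^{C_0nE/\lambda}$. Choosing $\lambda=Dn\log n$ with $D$ a large absolute constant makes the first factor at most $e^{C_0/D}$ and the second at most $(1-C_0/(D\log n))^{-1}$. For $n\geq3$ the product is at most $2$, and small $n$ can be absorbed into the constant, for instance via $\log n\to\log(n+1)$.

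The real issue is the exponent $3a/2$ and the range $a<1/5$: the crude route gives only $n^{C_0a}$ with an unspecified $C_0$, and requires $a<1/C_0$. To obtain the stated constants I would rerun the proof of Proposition~\ref{prop:log-concave-moments} while tracking constants, instead of quoting it. For $x\in(1-\delta)R_t(\mu)$ that proof gives
$$\Lambda_\mu^*(x)\leq t+\frac n2\log(c_4n)+n\log\frac1\delta.$$
It also gives $\mu\bigl((1-\delta)R_t\bigr)\geq e^{-4n\delta}(1-e^{-t/4})$, valid for $t\geq20n$. I would take $t$ large, say $t=20n+4v$, and parametrize $\delta=e^{-v'}/(8n)$ so that the exceptional set has measure at most $e^{-v'}+e^{-5n-v}$. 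Then $n\log(1/\delta)=n\log(8n)+nv'$, and together with the $\frac n2\log n$ term this produces $\frac{3}{2}n\log n$ at the threshold, which is the source of the exponent $3a/2$. The price is a tail of the form $\mu(\Lambda_\mu^*>\tfrac32n\log n+C'n+nv')\lesssim e^{-v'}$ plus the $t$-contribution. The coefficient $1$ in front of $nv'$ makes $\mathbb{E}e^{(a/n)\Lambda^*}$ finite for all $a<1$. The $t$-part, however, contributes $e^{a(20n+4v)/n}$ against a tail $e^{-v}$; this must be handled by coupling $v$ with $nv'$ (taking $v\approx nv'$ only costs through $e^{-t/4}$, which is tiny). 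The restriction $a<1/5$ presumably arises exactly from balancing $e^{at/n}$ with the $e^{-t/4}$ loss, so that $4a/\ldots<1$.

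The main obstacle is therefore this bookkeeping: organizing the layer-cake integral
$$\int_0^\infty\frac an e^{ay/n}\,\mu(\Lambda_\mu^*>y)\,dy$$
by splitting $y$ into the range below $\frac32n\log n+Cn$, which contributes $n^{3a/2}C(a)$, and the range above it. On the upper range, for each $y$ I would choose $t$ and $\delta$ adapted to $y$ so that the tail decays like $e^{-5y/n}$ times polynomial factors, leaving an integral convergent for $a<1/5$ and giving $C(a)n^{3a/2}$. If this optimization does not close, I would fall back to the cruder bound $C(a)n^{C_0a}$ for small $a$, which still yields the $\psi_1$ estimate.
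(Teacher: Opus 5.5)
Your refined route is the paper's proof. You rerun the argument of Proposition~\ref{prop:log-concave-moments} with $t=20n+4v$ and $\delta=e^{-v'}/(8n)$, so that $\Lambda_\mu^*\le A_n+4v+nv'$ off a set of measure at most $e^{-v'}+e^{-5n-v}$. Here $A_n=20n+\frac n2\log(cn)+n\log(8n)$, and you correctly identified this $\frac32n\log n+O(n)$ threshold as the source of $n^{3a/2}$. Your crude route, and the $\psi_1$ bound you derive from it, are fine. As written, however, the first claim is not proved. You leave the coupling of $v$ and $v'$ as ``the main obstacle'' and misplace the origin of $1/5$. Your final $y$-adapted scheme also aims for a tail $e^{-5y/n}$, which this argument cannot deliver: to push the exceptional mass below $e^{-5y/n}$ you need $nv'\ge5y$, and then the certified level $A_n+4v+nv'$ already exceeds $y$. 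Nor is such a tail what the range $a<1/5$ requires. The fallback $n^{C_0a}$ does not give the stated exponent.

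The obstacle is illusory; a single linear coupling closes everything. The paper takes $v=v'=u$. Then $\Lambda_\mu^*\le A_n+(n+4)u$ off a set of measure at most $e^{-u}/2+e^{-5n-u}\le e^{-u}$, that is, $\mu(\Lambda_\mu^*>A_n+s)\le e^{-s/(n+4)}$. The layer-cake formula then gives $\mathbb{E}_\mu e^{\lambda\Lambda_\mu^*}\le e^{\lambda A_n}/(1-\lambda(n+4))$ for $\lambda<1/(n+4)$. With $\lambda=a/n$ this needs $a(1+4/n)<1$, which holds for all $n\ge1$ exactly when $a<1/5$ (the worst case is $n=1$). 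Finally $e^{aA_n/n}=c^an^{3a/2}$.

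Your own suggestion $v=nv'$ works equally well. The level becomes $A_n+5nv'$ and the exceptional mass is at most $e^{-v'}/2+e^{-5}e^{-v'}\le e^{-v'}$, so the tail is $e^{-s/(5n)}$ and the same bound $e^{aA_n/n}/(1-5a)$ follows. Note, though, that the price of $v=nv'$ is paid in the level, not ``through $e^{-t/4}$'' (that term becomes negligible). The $1/5$ comes from the slope of the level in the free parameter, $n+4$ or $5n$, which adds the contribution $4u$ from $t$ to the contribution $nu$ from $\log(1/\delta)$. Balancing $e^{at/n}$ against $e^{-t/4}$ alone would only require $4a<n$. No $y$-dependent optimization is needed.
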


\begin{proof}
We use the more precise form of the preceding argument. With
$$t=20n+4u,\qquad \delta=\frac{e^{-u}}{8n},$$
the estimate above shows that
$$\Lambda_\mu^*(x)\leq A_n+(n+4)u$$
on $(1-\delta)R_t(\mu)$, where
$$A_n=20n+\frac{n}{2}\log(c_2n)+n\log(8n).$$
Consequently,
$$\mu\left(\left\{x:\Lambda_\mu^*(x)>A_n+s\right\}\right)\leq\exp\left(-\frac{s}{n+4}\right),\qquad s>0.$$
For $\lambda>0$, integration gives
$$\mathbb{E}_\mu\left[e^{\lambda\Lambda_\mu^*}\right]\leq e^{\lambda A_n} +\lambda\int_{A_n}^{\infty}e^{\lambda t}\mu(\{x:\Lambda_\mu^*(x)>t\})\,dt.$$
Writing $t=A_n+s$ and using the tail estimate,
\begin{align*}
\mathbb{E}_\mu\left[e^{\lambda\Lambda_\mu^*}\right]
&\leq e^{\lambda A_n}+\lambda e^{\lambda A_n}\int_0^\infty e^{(\lambda-1/(n+4))s}\,ds\\
&=\frac{e^{\lambda A_n}}{1-\lambda(n+4)},
\end{align*}
provided that
$$\lambda<\frac{1}{n+4}.$$
Taking
$$\lambda=\frac{a}{n},\qquad 0<a<\frac15,$$
we obtain
$$\mathbb{E}_\mu\left[\exp\left(\frac{a}{n}\Lambda_\mu^*\right)\right]\leq\frac{\exp(aA_n/n)}{1-5a}\leq C(a)n^{3a/2},$$
where $C(a)=c^a/(1-5a)$ for some absolute constant $c>0$. The $\psi_1$ estimate follows if we set
$a=c_1/\log n$ for a suitable absolute constant $c_1>0$.
\end{proof}

\begin{remark}\rm  The exponential moment estimates for convex bodies and general log-concave measures are of somewhat different strength. For convex
bodies, the direct boundary estimate gives
$$\mathbb{E}\exp\left(\frac{a}{n}\Lambda_K^*\right)\lesssim_a n^a,\qquad 0<a<1,$$
whereas the argument available for general log-concave measures gives
$$\mathbb{E}\exp\left(\frac{a}{n}\Lambda_\mu^*\right)\lesssim_a n^{3a/2},\qquad 0<a<\frac15.$$
The latter estimate is sufficient for the moment bounds above, but it leaves open the possibility that the exponential integrability for
general log-concave measures can be improved.
\end{remark}

\section{\texorpdfstring{$s$}{s}-Concavity and half-space depth}\label{sec:s-concavity-half-space-depth}

We now turn to the distribution of the half-space depth. The key observation is that, for $s$-concave measures, a suitable power of the
depth is itself a concave function. This allows us to apply a classical integral inequality for concave functions and obtain a
dimension-free concentration estimate for the logarithm of the depth.

Recall that a probability measure $\mu$ on $\mathbb{R}^n$ is $s$-concave, for $s \in(0,1/n]$, if
$$\mu\bigl((1-\lambda)A+\lambda B\bigr)^s\geq (1-\lambda)\mu(A)^s+\lambda\mu(B)^s$$
for all non-empty compact sets $A,B\subseteq\mathbb{R}^n$ and every $\lambda\in[0,1]$. Throughout this section, $\mu$ is supported on a convex body $K$.

For $u\in S^{n-1}$ and $x\in K$, define
$$H_u(x)=\bigl\{y\in\mathbb{R}^n:\langle y,u\rangle\geq\langle x,u\rangle\bigr\}.$$
Thus, $H_u(x)$ is the closed half-space whose boundary passes through $x$ and whose outer normal is $u$. Recall that the Tukey half-space depth is
$$q_\mu(x)=\inf_{u\in S^{n-1}}\mu\bigl(H_u(x)\bigr).$$

We begin with a direct consequence of \eqref{eq:dimension-free} and Proposition~\ref{prop:log-concave-moments}:

\begin{lemma}\label{lem:tukey-finite-moments}
    There exists an absolute constant $C>0$, such that for every log-concave probability measure $\mu$ on $\mathbb{R}^n$ and every $p \geq 1$, \[\|-\log q_\mu\|_{L^p(\mu)} \leq Cn(\log n+p).\] In particular, $-\log q_\mu$ has finite moments of all orders.
\end{lemma}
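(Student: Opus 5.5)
The plan is to bound $-\log q_\mu$ pointwise by a constant multiple of $\Lambda_\mu^*$ plus an additive constant, and then to apply the moment bound of Proposition~\ref{prop:log-concave-moments}. The pointwise bound comes from the dimension-free comparison \eqref{eq:dimension-free} of Brazitikos and Chasapis, which holds for every log-concave $\mu$ and every $\varepsilon\in(0,1)$. Taking $\varepsilon=1/2$ there gives
$$\Lambda_\mu^*(x)\geq \frac12\log\frac{1}{q_\mu(x)}+\log\frac{1/2}{\sqrt2},$$
which rearranges to
$$-\log q_\mu(x)\leq 2\Lambda_\mu^*(x)+2\log(2\sqrt2)=2\Lambda_\mu^*(x)+3\log 2 .$$
This holds at every $x$ where $q_\mu(x)>0$. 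If $q_\mu(x)=0$, then \eqref{eq:dimension-free} forces $\Lambda_\mu^*(x)=+\infty$, so the inequality remains valid in the extended sense.

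The second step is to check that the exceptional set does not matter. The set $\{q_\mu=0\}$ is $\mu$-null: for a full-dimensional log-concave measure, every point in the interior of the support has positive depth, and the boundary of a convex set has Lebesgue measure zero. In any case, the set where $\Lambda_\mu^*=+\infty$ is $\mu$-null by Proposition~\ref{prop:log-concave-moments} itself. Hence the pointwise inequality holds $\mu$-almost everywhere. Since $q_\mu\leq1$, the function $-\log q_\mu$ is nonnegative, so its $L^p$ norm is controlled by that of the right-hand side.

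The third step is Minkowski's inequality in $L^p(\mu)$ combined with Proposition~\ref{prop:log-concave-moments}:
$$\|-\log q_\mu\|_{L^p(\mu)}\leq 2\|\Lambda_\mu^*\|_{L^p(\mu)}+3\log2\leq 2C_1n(\log n+p)+3\log 2\leq Cn(\log n+p).$$
The additive constant is absorbed because $n(\log n+p)\geq p\geq1$. Finiteness of all moments follows immediately.

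No step is a genuine obstacle. The only points needing care are the choice of a fixed $\varepsilon$ so that the multiplicative loss $1/(1-\varepsilon)$ is an absolute constant, and the measure-zero issue where $q_\mu$ vanishes.
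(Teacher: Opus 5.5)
Your proof is correct and takes the same route as the paper. The paper states this lemma as a direct consequence of the Brazitikos--Chasapis comparison \eqref{eq:dimension-free} and Proposition~\ref{prop:log-concave-moments}. Your choice $\varepsilon=1/2$, which gives $-\log q_\mu\leq 2\Lambda_\mu^*+3\log 2$, followed by Minkowski's inequality, is exactly that argument written out, and your treatment of the null set $\{q_\mu=0\}$ supplies a detail the paper leaves implicit.
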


The following elementary observation is the main reason that $s$-concavity is particularly well suited to the study of half-space depth.

\begin{lemma}\label{lem:depth-s-concave}
Let $\mu$ be an $s$-concave probability measure supported on a convex body $K$. Then $q_\mu^s$ is concave on $K$.
\end{lemma}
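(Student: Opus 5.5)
The plan is to show that each single-half-space function $x\mapsto \mu(H_u(x))^s$ is concave on $K$. Then $q_\mu^s$, being the pointwise infimum over $u\in S^{n-1}$ of these functions, is concave. This uses $q_\mu^s=\inf_u \mu(H_u(x))^s$, which holds because $t\mapsto t^s$ is increasing.

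Fix $u\in S^{n-1}$ and set $\varphi(a)=\mu(\{y:\langle y,u\rangle\geq a\})$ for $a\in\mathbb{R}$, so that $\mu(H_u(x))=\varphi(\langle x,u\rangle)$. Since $x\mapsto\langle x,u\rangle$ is affine, concavity of $x\mapsto \varphi(\langle x,u\rangle)^s$ on $K$ reduces to concavity of $\varphi^s$ on the interval $I=\{\langle x,u\rangle: x\in K\}=[-h_K(-u),h_K(u)]$.

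To get this, take $a,b\in I$ and $\lambda\in[0,1]$, and set $A=\{y:\langle y,u\rangle\geq a\}\cap K$ and $B=\{y:\langle y,u\rangle\geq b\}\cap K$. These are compact and, since $a,b\in I$, nonempty. By convexity of $K$ and linearity of the constraint,
\[
(1-\lambda)A+\lambda B\subseteq \{y:\langle y,u\rangle\geq (1-\lambda)a+\lambda b\}\cap K .
\]
Because $\mu$ is supported on $K$, $\mu(A)=\varphi(a)$, and similarly for $B$. The $s$-concavity inequality then gives
\[
\varphi((1-\lambda)a+\lambda b)^s\geq (1-\lambda)\varphi(a)^s+\lambda\varphi(b)^s .
\]
The definition requires nonempty compact sets, and this holds because $a,b$ range only over $I$.

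Composing with the affine map and taking the infimum over $u$ completes the argument, since an infimum of concave functions is concave. All the functions involved are finite and take values in $[0,1]$, so the infimum is finite and no degenerate case arises. The only subtle point is this nonemptiness/compactness issue at the endpoints of $I$ (where $A$ may be a face of $K$ of measure zero). It is harmless: the inequality still holds with $\varphi=0$ there, and concavity on the closed set $K$ follows.
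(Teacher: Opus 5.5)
Your proof is correct and is essentially the paper's argument: the Minkowski combination $(1-\lambda)H_u(x)+\lambda H_u(y)$ lies in $H_u((1-\lambda)x+\lambda y)$, so monotonicity and $s$-concavity give the inequality for each $u$, and taking the infimum over $u\in S^{n-1}$ finishes (your one-dimensional reduction through $\varphi$ is only a repackaging of this). Intersecting the half-spaces with $K$ to obtain nonempty compact sets slightly improves on the paper's rigor, since the paper applies the $s$-concavity inequality directly to non-compact half-spaces. Your closing worry about the endpoints of $I$ is unnecessary: $A$ is nonempty there, so the inequality applies verbatim.
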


\begin{proof}
Fix $x,y\in K$ and $\lambda\in[0,1]$, and set
$$z=(1-\lambda)x+\lambda y.$$
For every $u\in S^{n-1}$, the convexity of half-spaces gives
$$(1-\lambda)H_u(x)+\lambda H_u(y)\subseteq H_u(z).$$
Hence, by the monotonicity and the $s$-concavity of $\mu$,
\begin{align*}
\mu\bigl(H_u(z)\bigr)^s &\geq\mu\bigl((1-\lambda)H_u(x)+\lambda H_u(y)\bigr)^s\\
&\geq (1-\lambda)\mu\bigl(H_u(x)\bigr)^s +\lambda\mu\bigl(H_u(y)\bigr)^s\\
&\geq (1-\lambda)q_\mu(x)^s +\lambda q_\mu(y)^s.
\end{align*}
Taking the infimum over $u\in S^{n-1}$ yields
$$q_\mu(z)^s\geq (1-\lambda)q_\mu(x)^s +\lambda q_\mu(y)^s.$$
Therefore, $q_\mu^s$ is concave on $K$.
\end{proof}

This concavity has a useful consequence for the fluctuations of the depth. We shall use a more general version of Berwald's inequality that appears in \cite[Theorem~6.2]{Fra-Li-Mad}.

\begin{theorem}\label{thm:Berwald-inequality}
Let $\mu$ be an $s$-concave probability measure supported on a convex body  $K$. If $g:K\to[0,\infty)$ is concave, then the function
$$p\longmapsto\left(\frac{1}{pB(p,1+s^{-1})}\int_K g(x)^p\,d\mu(x)\right)^{1/p}$$
is non-increasing on $(-1,\infty)$.
\end{theorem}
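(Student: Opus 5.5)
The statement is a generalized Berwald inequality for $s$-concave measures. My plan is to reduce it to a one-dimensional statement via the distribution function of $g$, and then compare with the extremal case. Set $M=\max_K g$ (attained, since $g$ is concave, hence continuous on $\operatorname{int}(K)$ and upper semicontinuous after closure; we may take $M$ to be the supremum). Consider the distribution function
$$\phi(t)=\mu(\{g\geq t\}),\qquad t\in[0,M].$$
By concavity of $g$, for $t_0,t_1\in[0,M]$ we have
$$(1-\lambda)\{g\geq t_0\}+\lambda\{g\geq t_1\}\subseteq\{g\geq(1-\lambda)t_0+\lambda t_1\}.$$
The $s$-concavity of $\mu$ then gives that $\phi^s$ is concave on $[0,M]$. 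Moreover $\phi(0)=1$, and $\phi$ is non-increasing.

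The key comparison is with the model function $\psi(t)=(1-t/a)_+^{1/s}$. This is exactly the distribution of $g$ when $\mu$ is the extremal (cone-like, Beta-type) measure. Its moments are
$$\int_0^\infty pt^{p-1}\psi(t)\,dt=a^p\,pB(p,1+s^{-1}),$$
which explains the normalization in the statement. Now fix $p$ and choose $a>0$ so that
$$\int g^p\,d\mu=\int_0^M pt^{p-1}\phi(t)\,dt$$
equals $a^p\,pB(p,1+s^{-1})$. For $p\in(-1,0)$, one uses instead the formula $\int g^p\,d\mu=\int_0^\infty |p|t^{p-1}(1-\phi(t))\,dt$, with the same normalization by analytic continuation of the Beta identity. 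The degenerate case $p=0$ is interpreted as a limit, giving a geometric mean.

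Next comes a crossing argument. Since $\phi^s$ is concave with $\phi^s(0)=1$, and $\psi^s$ is affine with $\psi^s(0)=1$ until it vanishes, the difference $\phi^s-\psi^s$ changes sign at most once on $(0,\infty)$: from $\le 0$ to $\ge 0$. Equality of the $p$-th moments forces a sign change, so there is $t_0$ with $\phi\leq\psi$ on $[0,t_0]$ and $\phi\geq\psi$ beyond $t_0$. For any $q>p$, we write
$$\int q t^{q-1}(\phi-\psi)\,dt=\frac{q}{p}\int t^{q-p}\cdot pt^{p-1}(\phi-\psi)\,dt.$$
Since $t^{q-p}$ is increasing and the weighted difference has zero total integral with a single sign change from negative to positive, this is $\geq0$. (The standard lemma: if $\int h=0$ and $h$ changes sign once from $-$ to $+$, then $\int w h\geq0$ for increasing $w$. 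Apply it with $w(t)-w(t_0)$.)

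Hence
$$\int g^q\,d\mu\ \ge\ a^q\,qB(q,1+s^{-1}),$$
which is the wrong direction as written. So I need to recheck orientation. Concavity of $\phi^s$ versus the chord actually means $\phi^s\geq\psi^s$ near $0$ (if both start at $1$ and $\phi^s$ is concave, it lies above the affine function until they cross, then below). So the sign change is from $+$ to $-$, and the inequality reverses:
$$\Bigl(\tfrac{1}{qB(q,1+s^{-1})}\int g^q\,d\mu\Bigr)^{1/q}\le a=\Bigl(\tfrac{1}{pB(p,1+s^{-1})}\int g^p\,d\mu\Bigr)^{1/p}.$$
This is exactly monotonicity.

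The main obstacle is the crossing step: ensuring a single crossing when $\phi$ may vanish before or after $a$, and handling negative $p$, where the sign conventions of $pB(p,\cdot)$ and the weight $t^{q-p}$ must be tracked carefully (including $p<0<q$ via the $p=0$ limit). I would treat $p,q>0$ first, then extend to $(-1,0]$ by continuity/analytic continuation in $p$. Alternatively, I would simply cite \cite[Theorem~6.2]{Fra-Li-Mad}, where this argument is carried out.
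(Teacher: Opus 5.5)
Your proposal is correct, and its fallback is exactly the paper's route: the paper gives no argument for this statement beyond citing \cite[Theorem~6.2]{Fra-Li-Mad}. Your sketch is the standard proof of such Berwald-type inequalities: concavity of $\phi^s$ for $\phi(t)=\mu(\{g\ge t\})$, then a single sign change of $\phi-\psi$ from $+$ to $-$ against $\psi(t)=(1-t/a)_+^{1/s}$, then the monotone-weight lemma with $w(t)=t^{q-p}$. With the orientation as you finally corrected it, this is sound for $0<p<q$, which is the only range used in Theorem~\ref{thm:variance-log-depth}.

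The one step that would not work is extending to $(-1,0]$ ``by continuity/analytic continuation''. Monotonicity on $(0,\infty)$ does not propagate to negative exponents that way. Instead, run the same crossing argument directly via the identity $\int_K g^p\,d\mu-a^p\,pB(p,1+s^{-1})=p\int_0^\infty t^{p-1}(\phi-\psi)\,dt$, which holds for all $p\in(-1,\infty)\setminus\{0\}$. The integrand vanishes beyond $\max\{a,M\}$ and is $O(t^p)$ near $0$, since $1-\phi(t)\le t/(sM)$ by concavity of $\phi^s$. Keep track of the sign of $q$ when you divide by it and take $1/q$-th roots; this also covers $p<0<q$ without any limiting argument at $p=0$.
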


We apply the extended version of Berwald's inequality to the concave function $q_\mu^s$. The resulting estimate is particularly striking: although the individual depth can vary 
exponentially in the dimension, its logarithm has fluctuations of order $1/s$.

\begin{theorem}\label{thm:variance-log-depth}
Let $\mu$ be an $s$-concave probability measure supported on a convex body $K$. Then
$$\operatorname{Var}_\mu\bigl(-\log q_\mu\bigr)\leq\frac{C}{s^2},$$
where $C>0$ is an absolute constant.

In particular, if $\mu=\mu_K$ is the uniform probability measure on a convex body $K$, then
$$\operatorname{Var}_{\mu_K}\bigl(-\log q_K\bigr)\leq Cn^2.$$
\end{theorem}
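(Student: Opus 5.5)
Set $g=q_\mu^s$, which is concave and non-negative on $K$ by Lemma~\ref{lem:depth-s-concave}, and $F(p)=\int_K g^p\,d\mu$ for $p>-1$. Theorem~\ref{thm:Berwald-inequality} says that
$$\Phi(p)=\left(\frac{F(p)}{pB(p,1+s^{-1})}\right)^{1/p}$$
is non-increasing on $(-1,\infty)$. The variance of $\log g=s\log q_\mu$ is the second derivative at $p=0$ of $\log F$, since $F(0)=1$. The plan is to turn the monotonicity of $\Phi$ near $p=0$ into a bound on this second derivative. Lemma~\ref{lem:tukey-finite-moments} makes all moments of $\log g$ finite, and $g>0$ on $\operatorname{int}(K)$. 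Also $g\le 2^{-s}\le 1$, since $q_\mu\le 1/2$ (take the better of $H_u(x)$ and the opposite closed half-space). Hence $F$ is smooth on $(-1,\infty)$ and analytic near $0$.

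\textbf{Step 1: identify the comparison function.} Put $\beta=1/s$ and $m(p)=pB(p,1+\beta)=\Gamma(p+1)\Gamma(\beta+1)/\Gamma(p+\beta+1)$, so $m(0)=1$. The equality case of Berwald's inequality is a variable $W\in[0,1]$ with $\mathbb{P}(W\le w)=1-(1-w)^{\beta}$, for which $\mathbb{E}W^p=m(p)$. In that case $-\log W$ is controlled by the law of $\log(1-U^{s})$-type variables, and the computation gives $\operatorname{Var}(\log W)=\psi'(1)-\psi'(\beta+1)\le\pi^2/6$.

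\textbf{Step 2: extract a second-order inequality at $0$.} Write $\ell(p)=\log F(p)-\log m(p)$, which satisfies $\ell(0)=0$. The monotonicity of $\Phi=\exp(\ell(p)/p)$ says that $p\mapsto\ell(p)/p$ is non-increasing on $(-1,0)\cup(0,\infty)$, and by continuity through $p=0$ with limit $\ell'(0)$. A smooth function $\ell$ with $\ell(0)=0$ and $\ell(p)/p$ non-increasing satisfies $\frac{d}{dp}\bigl(\ell(p)/p\bigr)\big|_{p=0}=\ell''(0)/2\le 0$. Therefore
$$\operatorname{Var}_\mu(\log g)=(\log F)''(0)\le(\log m)''(0)=\psi'(1)-\psi'(\beta+1)\le\frac{\pi^2}{6}.$$
Here $\psi$ is the digamma function, and $\psi'(1)=\pi^2/6$ with $\psi'>0$.

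\textbf{Step 3: conclude.} Since $\log g=-s(-\log q_\mu)$, we get $s^2\operatorname{Var}_\mu(-\log q_\mu)\le\pi^2/6$, which is the bound with $C=\pi^2/6$. The uniform case follows by taking $s=1/n$.

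The main obstacle is justifying Step 2 rigorously. This requires differentiability of $F$ at $0$ up to second order, obtained by differentiating under the integral using the finite moments of $\log q_\mu$ from Lemma~\ref{lem:tukey-finite-moments} together with $g\le1$. It also requires handling the extension of monotonicity across $p=0$, where $\Phi(0)$ is defined as a limit. If differentiating at $0$ proves delicate, the fallback is to use monotonicity only between $p=-\varepsilon$ and $p=\varepsilon$, expand both sides via Taylor's theorem with remainder, and let $\varepsilon\to0$.
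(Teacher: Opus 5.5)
Your proof is correct and follows the paper's argument essentially verbatim. Both use the concavity of $g=q_\mu^s$ and the generalized Berwald monotonicity. Both then observe that $\ell(p)/p$ non-increasing with $\ell(0)=0$ forces $\ell''(0)\le 0$, which yields $\operatorname{Var}_\mu(\log g)\le\psi'(1)-\psi'(1+s^{-1})\le\pi^2/6$.

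The only cosmetic difference is at $p=0$. The paper works one-sidedly on $[0,\infty)$, where $g\le 1$ and Lemma~\ref{lem:tukey-finite-moments} directly justify differentiating under the integral. Your two-sided claim that $F$ is analytic near $0$ needs $\int g^p\,d\mu<\infty$ for small $p<0$. This does hold, for instance from Berwald on $(-1,\infty)$, but it does not follow from finite polynomial moments alone.
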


\begin{proof}
Set
$$g(x)=q_\mu(x)^s.$$
By Lemma~\ref{lem:depth-s-concave}, $g$ is concave.
Set $a=1+s^{-1}>1$. Since for every $p>0$ \[pB(p,a)=\frac{\Gamma(p+1)\Gamma(a)}{\Gamma(p+a)},\] and the function on the right-hand side is also defined on $0$, we set
$$A(p)=-\log \left(\frac{\Gamma(p+1)}{\Gamma(p+a)}\right)-\log\Gamma(a), \qquad C(p)=A(p) +\log\int_K g(x)^p\,d\mu(x),$$ for every $p \geq 0$. Observe that \[A'(p)=-\psi(p+1)+\psi(p+a)\] for every $p \geq 0$, where $\psi$ is the digamma function. In particular (see \cite{Artin-book}),  \[A''(0)=\psi'(a)-\frac{\pi^2}{6}.\] Moreover by Lemma~\ref{lem:tukey-finite-moments}, $C(p)$ has derivatives of every order at $0$ and more precisely, \[C''(0)=\psi'(a)-\frac{\pi^2}{6}+\operatorname{Var}_\mu(\log g).\]
Set now for $p>0$ $$H(p)=\frac{C(p)}{p}.$$
Notice that $H$ has a finite limit as $p\to0^+$. So it extends continuously to $[0,\infty)$ and by Theorem~\ref{thm:Berwald-inequality}, $H$ is non-increasing on $[0,+\infty)$. Moreover, $H'(0)$ exists and is equal to $C''(0)/2$. Thus,
$$C''(0)\leq0.$$ 
It follows that
$$\operatorname{Var}_\mu(\log g)\leq \frac{\pi^2}{6}-\psi'(a) \leq \frac{\pi^2}{6},$$ since $\psi'>0$. Finally,
$$\log g=s\log q_\mu,$$
and therefore
$$s^2\operatorname{Var}_\mu(\log q_\mu)\leq\frac{\pi^2}{6}.$$
Since variance is unchanged by multiplication by $-1$,
$$\operatorname{Var}_\mu(-\log q_\mu)\leq\frac{\pi^2}{6s^2}.$$
This proves the first assertion.

For the uniform probability measure on a convex body, the corresponding concavity parameter is $s=1/n$. Hence
$$\operatorname{Var}_{\mu_K}(-\log q_K)\leq\frac{\pi^2}{6}n^2.$$
\end{proof}

\begin{remark}\rm The upper bound for the variance of $I_K(x)=-\log q_K(x)$ is optimal with respect to the dimension. Indeed, if $K=B_2^n$ and $n$ is even, using the estimates for $q_{B_2^n}$ in Proposition~\ref{prop:optimality-of-inequality}, a direct calculation shows that 
$$\mathbb{E}_{\mu_{B_2^n}}(I_{B_2^n})=\frac{n+1}{2}H_{\frac{n}{2}}+O(\log n),$$ 
where $H_m=\sum_{k=1}^m \frac{1}{k}$ is the $m$-th harmonic number. In addition, 
$$\mathbb{E}_{\mu_{B_2^n}}(I_{B_2^n}^2)\geq\frac{(n+1)^2}{4}H_{\frac{n}{2}}^2+\frac{(n+1)^2}{4}+O(n \log^2 n).$$ 
Hence, we obtain the lower bound 
$$\Var_{\mu_{B_2^n}}(I_{B_2^n}) \geq \frac{n^2}{4}+O(n \log^2n).$$
\end{remark}

\section{Threshold phenomena for random convex hulls}\label{sec:threshold-random-convex-hulls}

We now apply the preceding estimates to the random convex hull problem. Let $X_1,X_2,\ldots$ be independent random vectors distributed according
to a probability measure $\mu$ on $\mathbb{R}^n$, and write
$$K_N=\operatorname{conv}\{X_1,\ldots,X_N\}.$$
For $x\in\mathbb{R}^n$, define
$$N_\mu(x)=\inf\left\{N\in\mathbb{N}:\mathbb{P}(x\in K_N)\geq\frac12\right\}.$$
Thus, $N_\mu(x)$ measures the number of random points required, up to a fixed probability threshold, for their convex hull to contain $x$.

A fundamental result of Hayakawa, Lyons, and Oberhauser relates this quantity directly to the Tukey half-space depth.

\begin{theorem}[Hayakawa--Lyons--Oberhauser]\label{thm:HLO-threshold}
Let $\mu$ be a probability measure on $\mathbb{R}^n$. Then, for every $x\in\mathbb{R}^n$,
$$\frac{1}{2}\leq N_\mu(x)q_\mu(x)\leq 3n+1.$$
\end{theorem}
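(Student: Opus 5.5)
The lower bound $N_\mu(x)q_\mu(x)\geq\frac12$ is a first-moment argument, and the upper bound $N_\mu(x)q_\mu(x)\leq 3n+1$ is a bound on the probability that some half-space through $x$ is missed by all the samples, for $N$ linear in $n/q_\mu(x)$. Throughout we may assume $q_\mu(x)>0$, since otherwise there is nothing to prove.

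\emph{Lower bound.} Fix a closed half-space $H$ containing $x$. If $X_i\notin H$ for every $i\leq N$, then all the $X_i$ lie in the open convex set $\mathbb{R}^n\setminus H$. Hence $K_N\subseteq\mathbb{R}^n\setminus H$, and so $x\notin K_N$. Contrapositively, $x\in K_N$ forces some $X_i\in H$. A union bound then gives
$$\mathbb{P}(x\in K_N)\leq N\mu(H).$$
Taking the infimum over $H$ yields $\mathbb{P}(x\in K_N)\leq Nq_\mu(x)$. Applying this with $N=N_\mu(x)$, where the left-hand side is at least $\frac12$, gives $N_\mu(x)q_\mu(x)\geq\frac12$.

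\emph{Upper bound: reduction to finitely many half-spaces.} We must show that $\mathbb{P}(x\notin K_N)\leq\frac12$ once $N\geq(3n+1)/q_\mu(x)$. On the event $\{x\notin K_N\}$, separation gives a closed half-space $H$ with $x$ on its boundary hyperplane and $X_1,\dots,X_N$ in the open complement. Rotating $H$ about $x$, I will choose a canonical such half-space whose boundary passes through $x$ and through at most $n-1$ of the sample points, i.e. a boundary hyperplane spanned by $x$ and a small subset $I$ of the sample.

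\emph{Upper bound: exchangeability and the crude estimate.} For a fixed index set $I$ with $|I|\leq n-1$, condition on $(X_i)_{i\in I}$. The canonical half-space is then determined, and it has $\mu$-mass at least $q_\mu(x)$. The remaining $N-|I|$ independent points must all avoid it, which has probability at most $(1-q_\mu(x))^{N-n}$. Summing over $I$ gives
$$\mathbb{P}(x\notin K_N)\leq\sum_{j<n}\binom{N}{j}\bigl(1-q_\mu(x)\bigr)^{N-j}.$$
This already yields $N_\mu(x)q_\mu(x)\lesssim n\log n$.

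\emph{Main obstacle: removing the logarithm.} The step I expect to be hardest is getting the linear constant $3n+1$ instead of $n\log n$, since the union bound over $\binom{N}{j}$ supports loses exactly that logarithm. To remove it, I would first try an exchangeability/double-counting argument in the spirit of Wendel's theorem. Draw $M=N+m$ points and count the $N$-subsets whose hull misses $x$. For $x$ to be missed, the canonical half-space must contain none of the other $m$ points, and each of those points lands in it with probability at least $q_\mu(x)$. This should convert the bound into a ratio of binomial-type sums that is at most $\frac12$ when $Nq_\mu(x)\approx 3n$. A fallback is a sequential argument: show that adding about $1/q_\mu(x)$ new points reduces the dimension of the set of separating directions by one in expectation, with a geometric-tail correction that accounts for the constant $3$. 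Here I am uncertain, and the precise constant $3n+1$ presumably comes from the Hayakawa--Lyons--Oberhauser analysis, which may rely on tools beyond this paper.
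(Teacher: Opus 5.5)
The paper gives no proof of this statement; it imports it from \cite[Theorem~16]{HLO}. Your lower bound is correct and is the standard argument. For a fixed closed half-space $H\ni x$, the event $x\in K_N$ forces some $X_i\in H$, so $\mathbb{P}(x\in K_N)\le N\mu(H)$.

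The genuine gap is the upper bound $N_\mu(x)q_\mu(x)\le 3n+1$, which is the real content of the theorem and which you leave open. In addition, your intermediate claim that the crude union bound already gives $N_\mu(x)q_\mu(x)\lesssim n\log n$ is false. Write $q=q_\mu(x)$ and $N=\lambda/q$. The leading term $\binom{N}{n-1}(1-q)^{N-n+1}$ is then of order $\frac{1}{(n-1)!}(\lambda/q)^{n-1}e^{-\lambda}$. This is below $\frac12$ only when $\lambda\gtrsim (n-1)\log\frac{e\lambda}{(n-1)q}$. So the crude estimate only yields $N_\mu(x)q_\mu(x)\lesssim n\log(e/q_\mu(x))$, which is not bounded in terms of $n$ alone. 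Since $q_\mu(x)$ is typically exponentially small (already $\mathbb{E}_{\mu_K}[-\log q_K]\ge cn$ for every convex body), the loss is a polynomial factor in $n$, not a logarithm.

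The loss comes from using only $\mu(H_I)\ge q$ for each of the roughly $\binom{N}{n-1}$ candidate half-spaces. The $n-1$ sample points spanning the boundary of a witness half-space are themselves random, and they must pay for that count. The symmetric case shows the size of the overcount. For $\mu$ symmetric about $x$ and in general position ($q=\frac12$), Wendel's formula $\mathbb{P}(x\notin K_N)=2^{-N+1}\sum_{k<n}\binom{N-1}{k}$ is smaller than your bound by a factor of order $2^{n}$, i.e.\ roughly $q^{-(n-1)}$. A bound of the right shape would have $\binom{N}{n-1}q^{n-1}\approx (Nq)^{n-1}/(n-1)!$ in place of $\binom{N}{n-1}$. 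Such a bound depends on $N$ and $q$ only through $Nq$, and is small once $Nq\approx 3n$. Obtaining an estimate of this kind is the missing idea, and neither of your sketches supplies it.

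\begin{itemize}
\item The double-counting sketch uses the same per-half-space information (mass at least $q$), so as described it cannot beat the union bound.
\item The sequential sketch has no meaningful progress measure. The set $\{v:\langle v,X_i-x\rangle>0 \text{ for all } i\le N\}$ of separating directions is an open cone, hence either empty or $n$-dimensional, so its dimension does not drop one step at a time.
\item A Wendel-type recursion has to project along each newly added point instead. Without symmetry, its direct form is $\mathbb{P}(x\notin K_N)\le(1-q)\,\mathbb{P}(x\notin K_{N-1})$ plus an $(n-1)$-dimensional non-capture probability for $N-1$ points. This carries no factor $q$ on the second term, and iterating it just reproduces the crude bound.
\end{itemize}
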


See \cite[Theorem~16]{HLO}. In particular, the number of points required to capture $x$ is determined, up to a polynomial factor in
the dimension, by the reciprocal of its half-space depth.

For uniform measures on convex bodies, the results of the previous sections show that the half-space depth is closely related to the
Cram\'{e}r transform. More precisely, Theorem~\ref{thm:q-lower-bound-star} gives
$$q_K(x)\geq\frac{C}{\sqrt{n}}e^{-\Lambda_K^*(x)},$$
while the general comparison
$$q_K(x)\leq e^{-\Lambda_K^*(x)}$$
holds for every $x\in K$. Consequently,
$$e^{\Lambda_K^*(x)}\leq\frac{1}{q_K(x)}\leq\frac{\sqrt{n}}{C}e^{\Lambda_K^*(x)}.$$
Combining this with Theorem~\ref{thm:HLO-threshold} yields
$$\frac12 e^{\Lambda_K^*(x)}\leq N_K(x)\leq Cn^{3/2}e^{\Lambda_K^*(x)}.$$
Thus, up to polynomial factors in $n$, the Cram\'{e}r transform is the logarithmic scale governing the number of random points required to
capture a given point.

We next quantify this statement uniformly over homothetic copies of a centered convex body $K$. Recall that
$$M_\eta=\max_{x\in\eta K}\Lambda_K^*(x),\qquad 0<\eta<1.$$
The estimates established in Section~\ref{sec:cramer-inside-convex-bodies} show that
$$c\eta^2n\leq M_\eta\leq n\log\left(\frac{1}{1-\eta}\right).$$
This leads immediately to the following result.

\begin{proposition}\label{prop:threshold-number-points}
There exist absolute constants $c,C>0$ such that, for every centered convex body $K$ in $\mathbb{R}^n$ and every $\eta\in(0,1)$,
$$\frac12e^{c\eta^2n}\leq\sup_{x\in\eta K}N_K(x)\leq Cn^{3/2}(1-\eta)^{-n}.$$
\end{proposition}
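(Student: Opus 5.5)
The plan is to sandwich $N_K(x)$ between two multiples of $e^{\Lambda_K^*(x)}$, using the pointwise comparison $\frac12 e^{\Lambda_K^*(x)}\leq N_K(x)\leq Cn^{3/2}e^{\Lambda_K^*(x)}$ derived just above from Theorem~\ref{thm:HLO-threshold} and Theorem~\ref{thm:q-lower-bound-star}. Since both bounds are monotone in $\Lambda_K^*(x)$, taking the supremum over $x\in\eta K$ gives
$$\frac12 e^{M_\eta}\leq\sup_{x\in\eta K}N_K(x)\leq Cn^{3/2}e^{M_\eta}.$$
One technical point: the comparison is stated for $x\in\operatorname{int}(K)$. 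This causes no difficulty, because $\eta K\subseteq\operatorname{int}(K)$ whenever $K$ is centered (so $0\in\operatorname{int}(K)$) and $\eta<1$. Moreover, $\Lambda_K^*$ is continuous on $\operatorname{int}(K)$ and $\eta K$ is compact, so the maximum $M_\eta$ is attained.

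For the lower bound, I pick a point $x_0\in\eta K$ with $\Lambda_K^*(x_0)=M_\eta$. The lower half of the comparison, $N_K(x)\geq\frac{1}{2q_K(x)}\geq\frac12 e^{\Lambda_K^*(x)}$, gives $N_K(x_0)\geq\frac12 e^{M_\eta}$. Lemma~\ref{lem:star-max-lower-bound} then gives $M_\eta\geq c\eta^2n$, hence $\sup_{x\in\eta K}N_K(x)\geq\frac12 e^{c\eta^2 n}$.

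For the upper bound, for every $x\in\eta K$ I apply the upper half of the comparison together with Lemma~\ref{lem:star-max-upper-bound}:
$$N_K(x)\leq Cn^{3/2}e^{\Lambda_K^*(x)}\leq Cn^{3/2}e^{M_\eta}\leq Cn^{3/2}(1-\eta)^{-n}.$$

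There is no real obstacle, since the substantive work is in the cited results. The only care needed is to track that the constant $C$ comes from Theorem~\ref{thm:q-lower-bound-star} and the $3n+1\leq 4n$ factor in Theorem~\ref{thm:HLO-threshold}, which together give the $n^{3/2}$ power.
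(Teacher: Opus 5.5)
Your proposal is correct and follows essentially the same route as the paper. The upper bound comes from Theorem~\ref{thm:HLO-threshold} combined with Theorem~\ref{thm:q-lower-bound-star} and Lemma~\ref{lem:star-max-upper-bound}. The lower bound comes from $N_K(x)\geq \frac{1}{2q_K(x)}\geq \frac12 e^{\Lambda_K^*(x)}$ at a point of $\eta K$ supplied by Lemma~\ref{lem:star-max-lower-bound}. Your remark that $\eta K\subseteq\operatorname{int}(K)$ for centered $K$, so the pointwise comparison applies on all of $\eta K$, is a detail the paper leaves implicit.
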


\begin{proof}
Let $x\in\eta K$. By Theorem~\ref{thm:HLO-threshold},
$$N_K(x)\leq\frac{3n+1}{q_K(x)}.$$
Using the lower bound for the half-space depth,
$$q_K(x)\geq\frac{C}{\sqrt{n}}e^{-\Lambda_K^*(x)},$$
we obtain
$$N_K(x)\leq Cn^{3/2}e^{\Lambda_K^*(x)}\leq Cn^{3/2}e^{M_\eta}.$$
The upper bound for $M_\eta$ gives
$$N_K(x)\leq Cn^{3/2}\left(\frac{1}{1-\eta}\right)^n.$$
Taking the supremum over $x\in\eta K$ proves the upper bound.

For the converse, Lemma~\ref{lem:star-max-lower-bound} provides a point $x_\eta\in\eta K$ such that
$$\Lambda_K^*(x_\eta)\geq c\eta^2n.$$
Theorem~\ref{thm:HLO-threshold} gives
$$N_K(x_\eta)\geq\frac{1}{2q_K(x_\eta)}.$$
Since
$$q_K(x_\eta)\leq e^{-\Lambda_K^*(x_\eta)},$$
we conclude that
$$N_K(x_\eta)\geq\frac12 e^{\Lambda_K^*(x_\eta)}\geq\frac12e^{c\eta^2n}.$$
Therefore,
$$\sup_{x\in\eta K}N_K(x)\geq\frac12e^{c\eta^2n}.$$
\end{proof}

The proposition exhibits two different scales. For a fixed $\eta\in(0,1)$, the lower bound is exponential in $n$, showing that there are always points in $\eta K$ which 
are exponentially difficult to capture by a random convex hull. The upper bound has the expected boundary singularity $(1-\eta)^{-n}$ and is uniform over the entire
homothetic copy $\eta K$.

The connection with threshold phenomena becomes particularly transparent when one considers the fluctuations of the Cram\'{e}r
transform and of the information content associated with the half-space depth. Define
$$I_K(x)=-\log q_K(x)$$
and
$$\beta(\mu_K)=\frac{\operatorname{Var}_{\mu_K}(\Lambda_K^*)}{\bigl(\mathbb{E}_{\mu_K}[\Lambda_K^*]\bigr)^2},\qquad \tau(\mu_K)=
\frac{\operatorname{Var}_{\mu}(I_K)}{\bigl(\mathbb{E}_{\mu_K}[I_K]\bigr)^2}.$$

In order to compare these two parameters we will first need to gather some estimates that first appeared in \cite{BGP-threshold} and \cite{Giannopoulos-Tziotziou-2025}. We take into account the solution of the slicing problem \cite{KL}, i.e. for every log-concave probability measure $\mu$ on $\mathbb{R}^n$ \[L_\mu \asymp 1.\]

\begin{lemma}\label{lem:mean-variance-star-estimates}
 If $\mu$ is a log-concave probability measure on $\mathbb{R}^n$ then \[c_1n \leq \mathbb{E}_\mu[\Lambda_\mu^\ast] \leq c_2n\log n.\] In addition, if $K$ is a convex body in $\mathbb{R}^n$, we have that\[\Var_{\mu_K}(\Lambda_K^\ast) \geq \frac{c_3}{n^2} \bigl(\mathbb{E}_{\mu_K}[\Lambda_K^\ast]\bigr)^2.\] Therefore, \[\beta(\mu_K) \geq \frac{c_3}{n^2},\] where $c_1,c_2,c_3>0$ are absolute constants.
\end{lemma}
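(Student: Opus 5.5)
The plan has three parts: the lower bound on the mean, the upper bound on the mean, and the variance bound. The ratio bound for $\beta(\mu_K)$ then follows by dividing.

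\textbf{Lower bound on the mean.} By affine invariance I assume $\mu$ is isotropic. Since $\Lambda_\mu^*\leq -\log q_\mu$ goes the wrong way, I instead bound $\Lambda_\mu^*$ from below via Cram\'er sublevel sets. By \cite[Proposition~2.7]{Giannopoulos-Tziotziou-2025}, $B_t(\mu)=\{\Lambda_\mu^*\leq t\}\subseteq 2Z^+_{c t}(\mu)\subseteq 2Z_{ct}(\mu)$. For $t=\varepsilon n$ with $\varepsilon$ small, the volume estimate for $L_q$-centroid bodies \cite[Theorem~5.1.17]{BGVV-book}, combined with $L_\mu\asymp 1$ \cite{KL}, gives
\[
\vol_n(Z_{ct}(\mu))^{1/n}\lesssim \sqrt{t/n}.
\]
Also $\|f_\mu\|_\infty^{1/n}\lesssim L_\mu\asymp1$. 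Hence
\[
\mu(B_{\varepsilon n})\leq \|f_\mu\|_\infty\,2^n\vol_n(Z_{c\varepsilon n})\leq (C\sqrt{\varepsilon})^n\leq \tfrac12
\]
for suitable $\varepsilon$. It follows that $\mathbb{E}_\mu[\Lambda_\mu^*]\geq \varepsilon n\,\mu(\Lambda_\mu^*>\varepsilon n)\geq c_1n$. The slicing-dependent volume comparison is the key input; this is where the solution of the slicing problem enters.

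\textbf{Upper bound on the mean.} This is immediate from Proposition~\ref{prop:log-concave-moments} with $p=1$, which gives $\mathbb{E}_\mu\Lambda_\mu^*\leq C_1n(\log n+1)\leq c_2n\log n$.

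\textbf{Variance bound.} I would exploit the boundary singularity of $\Lambda_K^*$, using the lower bound of Lemma~\ref{lem:boundary-star-lower-bound} in the radial direction. With $K$ centered, that lemma gives $\Lambda_K^*(x)\geq -\log(1-\|x\|_K)-\|x\|_K$. Under $\mu_K$, the variable $R=\|x\|_K$ has density $nr^{n-1}$ on $[0,1]$, so $W:=-\log(1-R)$ has an exponential-type tail of rate $1$ beyond $\log n$:
\[
\mathbb{P}\bigl(W>\log n+u\bigr)\asymp e^{-u}\quad\text{for } u\geq0.
\]
Write $m=\mathbb{E}\Lambda_K^*\leq c_2n\log n$. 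I split into two cases.

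\emph{Case $m\leq 2n$.} Then $\Lambda_K^*\geq W-1>4m$ on the event $\{W>4m+1\}$. Using $\mathbb{P}(W>s)\geq c\,n e^{-s}$ would give only an exponentially small probability here, which is useless. Instead I use $\Lambda_K^*\geq W-1$ on the event $\{W\geq \log n+3\}$, which has probability bounded below by an absolute constant. This yields $\Lambda_K^*\geq \log n+2$ on a set of constant measure. That controls fluctuations only at scale $\log n$, so it is too weak for the target $m^2/n^2$ when $m\asymp n$.

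\emph{General approach.} A cleaner route is the reverse of Berwald. By Theorem~\ref{thm:q-lower-bound-star}, $\Lambda_K^*$ agrees with $-\log q_K$ up to $\tfrac12\log n+C$, so it suffices to show $\Var(-\log q_K)\gtrsim m^2/n^2$ up to errors of order $m\log n/n$. Consider $g=q_K^{1/n}$, which is concave by Lemma~\ref{lem:depth-s-concave}, vanishes on $\partial K$, and satisfies $g\leq 1$. I claim $g(x)\leq (1-\|x\|_K)\cdot\max g$ up to the Gr\"unbaum constant, and in particular
\[
-\log q_K\geq -n\log(1-\|x\|_K)+O(n)\geq nW-Cn.
\]
Then $\mathbb{P}(nW-Cn\geq 2m)\geq c\,n e^{-2m/n-C}$. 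When $m\lesssim n\log n$ this is only polynomially small in $n$, so it must be combined with a second-moment computation:
\[
\mathbb{E}(\Lambda_K^*)^2\geq \mathbb{E}\bigl[(nW-Cn)_+^2\bigr]\geq m^2\Bigl(1+\frac{c}{n^2}\Bigr)
\]
after tracking the distribution of $nW$. Here $nW$ has mean $\approx n\log n$ and variance $\approx n^2$, which gives $\Var\gtrsim n^2\gtrsim m^2/\log^2 n$; this is far stronger than $m^2/n^2$ once it is transferred. The main obstacle is this transfer: turning pointwise domination by $nW$ into a lower bound on the variance of $\Lambda_K^*$ itself, which requires a matching upper control. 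For that I would use Lemma~\ref{lem:star-max-upper-bound}, $\Lambda_K^*\leq nW$, together with the lower bound above, so that $\Lambda_K^*$ is sandwiched between $nW-Cn$ and $nW$. This yields $\Var(\Lambda_K^*)\geq \Var(nW)-O(n\cdot n)$, and I must check carefully that the constants leave a positive multiple of $n^2\geq m^2/n^2$. If this sandwich is too lossy, I fall back on the trivial target $m^2/n^2\lesssim \log^2 n$: it suffices that $\Lambda_K^*\geq W-1$ spreads at scale $\log n$ with constant probability on each side of $m$.
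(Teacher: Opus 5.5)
The mean bounds are essentially fine for large $n$. Your upper bound is the paper's (Proposition~\ref{prop:log-concave-moments} with $p=1$). Your lower bound, via $B_t(\mu)\subseteq 2Z_{ct}(\mu)$, the volume estimate for $Z_q(\mu)$ and $\|f_\mu\|_\infty^{1/n}=L_\mu\asymp 1$, is a reproof of the result the paper cites from \cite{BGP-threshold}. It only applies once $c\varepsilon n$ exceeds a fixed constant, so you still need a separate argument for $n\leq n_0$. The paper gets this from $\Lambda_\mu^*(x)\geq\Lambda_Y^*(\langle x,u\rangle)$ for a one-dimensional marginal $Y$ together with a one-dimensional lower bound.

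\textbf{The gap is in the variance bound.} Your key claim $q_K(x)^{1/n}\lesssim(1-\|x\|_K)\max q_K^{1/n}$ has the concavity inequality backwards. Write $x=r\theta$ with $\theta\in\partial K$. Concavity of $g=q_K^{1/n}$ and $g=0$ on $\partial K$ give $g(x)\geq(1-r)g(0)$, i.e.\ $-\log q_K\leq nW+1$. That is only the upper bound you already have.

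The lower bound $\Lambda_K^*\geq nW-Cn$ is in fact false. For the cube, $\Lambda_K^*$ is a sum of $n$ one-dimensional Cram\'er transforms, so $\E\Lambda_K^*\asymp n$, while $\E[nW]=nH_n\asymp n\log n$. Even for the ball, \eqref{eq:unit-ball-star-bounds} gives $\Lambda^*\leq\frac{n+2}{2}W$. So there is no sandwich through which to transfer $\Var(nW)$.

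Your fallbacks do not help either. The bound $\Lambda_K^*\geq W-1$ is one-sided and sits at height about $\log n$, far below $m\geq c_1 n$. It therefore says nothing about $\Lambda_K^*$ spreading on both sides of $m$.

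\textbf{The missing idea is radial convexity of $\Lambda_K^*$,} rather than any comparison with $nW$.
\begin{itemize}
\item For centered $K$, write $x=R\theta$, where $R$ is independent of $\theta$ and has density $nr^{n-1}$. Set $\phi_\theta(r)=\Lambda_K^*(r\theta)$. This function is convex, nondecreasing, and $\phi_\theta(0)=0$, so $\phi_\theta(r_2)\geq (r_2/r_1)\phi_\theta(r_1)$.
\item Take $r_1=1-a/n$ and $r_2=1-a/(2n)$. Both events $\{R\leq r_1\}$ and $\{R\geq r_2\}$ have probability $\gtrsim_a 1$. Between them $\phi_\theta(R)$ increases by at least $\frac{a}{2n}\phi_\theta(r_1)$. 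Hence $\Var(\Lambda_K^*)\geq \E_\theta\Var(\phi_\theta(R)\mid\theta)\gtrsim_a n^{-2}\bigl(\E_\theta\phi_\theta(r_1)\bigr)^2$.
\item Rerun the homothety argument of Lemma~\ref{lem:star-max-upper-bound} with pole $r_1\theta$, using $r\theta+\frac{1-r}{1-r_1}(K-r_1\theta)\subseteq K$. This gives $\phi_\theta(r)\leq\phi_\theta(r_1)+n\log\frac{1-r_1}{1-r}$ for $r\geq r_1$, and so $m\leq\E_\theta\phi_\theta(r_1)+an$.
\item Choose $a=c_1/2$ and use $m\geq c_1 n$ from the first part. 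Then $\E_\theta\phi_\theta(r_1)\geq m/2$, which yields $\Var_{\mu_K}(\Lambda_K^*)\gtrsim m^2/n^2$.
\end{itemize}
For comparison, the paper itself proves this part only by citing the proof of \cite[Lemma~5.2]{BGP-threshold}.
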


\begin{proof}
    For the first assertion, the upper bound follows from Proposition~\ref{prop:log-concave-moments}. For the lower bound, by \cite[Lemma~5.1]{BGP-threshold}, that was essentially proven in \cite{BGP-depth}, there exist some absolute constants $n_0 \in \mathbb{N}$ and $c>0$ such that for every $n \geq n_0$, \[\mathbb{E}_{\mu}[\Lambda_\mu^\ast] \geq cn.\] Now, if $n \leq n_0$, let $u \in S^{n-1}$ and consider the log-concave random variable \[Y=\langle X,u \rangle,\] where $X$ is distributed according to $\mu$. Observe that \[\Lambda_\mu^\ast(x) \geq \Lambda_Y^\ast(\langle x,u\rangle)\] for every $x \in \mathbb{R}^n$. Hence,  \[\mathbb{E}_\mu[\Lambda_\mu^\ast]\geq \mathbb{E}_Y[\Lambda_Y^\ast].\] The one-dimensional lower bound from \cite[Lemma~5.5]{Brazitikos-Chasapis-2024} completes the proof.

    The second assertion, comes from the proof of \cite[Lemma~5.2]{BGP-threshold}.
\end{proof}

 The next proposition shows that $\beta(\mu_K)$ and $\tau(\mu_K)$ are essentially equivalent. In particular, both are upper bounded by absolute constants.

\begin{proposition}\label{prop:equivalence-q-star}
Let $K$ be a convex body. Then
$$\operatorname{Var}_{\mu_K}(\Lambda_K^*)\leq Cn^2,$$
where $C>0$ is an absolute constant. Moreover,
$$\beta(\mu_K)=\left(\tau(\mu_K)+O\left(\frac{\log n}{n}\right)\right)\left(1+O\left(\frac{\log n}{n}\right)\right).$$
In particular,
$$\beta(\mu_K)\leq C_1,\qquad \tau(\mu_K)\leq C_2,$$
where $C_1,C_2>0$ are absolute constants.
\end{proposition}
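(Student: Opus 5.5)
Write $I_K=-\log q_K$ and $D=I_K-\Lambda_K^*$. The plan is to treat $\Lambda_K^*$ as a bounded perturbation of $I_K$. The variance of $I_K$ is controlled by Theorem~\ref{thm:variance-log-depth}, and the perturbation $D$ is controlled pointwise by Theorem~\ref{thm:q-lower-bound-star}.

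By the elementary comparison $q_K\le e^{-\Lambda_K^*}$ together with Theorem~\ref{thm:q-lower-bound-star},
$$0\le D(x)\le \tfrac12\log n+C,\qquad x\in\operatorname{int}(K).$$
Hence $\operatorname{Var}_{\mu_K}(D)\le(\tfrac12\log n+C)^2$. Since standard deviations are subadditive,
$$\sqrt{\operatorname{Var}(\Lambda_K^*)}\le\sqrt{\operatorname{Var}(I_K)}+\sqrt{\operatorname{Var}(D)}\le \frac{\pi}{\sqrt6}\,n+\tfrac12\log n+C,$$
which gives $\operatorname{Var}_{\mu_K}(\Lambda_K^*)\le Cn^2$. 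Swapping the roles of $\Lambda_K^*$ and $I_K$ yields the two-sided relation
$$\sqrt{\operatorname{Var}(\Lambda_K^*)}=\sqrt{\operatorname{Var}(I_K)}+O(\log n).$$

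Next compare the means. We have $\mathbb{E}[I_K]=\mathbb{E}[\Lambda_K^*]+\mathbb{E}[D]$ with $0\le \mathbb{E}[D]\le \tfrac12\log n+C$. By Lemma~\ref{lem:mean-variance-star-estimates}, $\mathbb{E}[\Lambda_K^*]\ge c_1n$, so
$$\mathbb{E}[I_K]=\mathbb{E}[\Lambda_K^*]\bigl(1+O(\log n/n)\bigr),$$
and therefore $(\mathbb{E}I_K)^2/(\mathbb{E}\Lambda_K^*)^2=1+O(\log n/n)$.

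Now write
$$\beta=\frac{\operatorname{Var}(\Lambda_K^*)}{(\mathbb{E}I_K)^2}\cdot\frac{(\mathbb{E}I_K)^2}{(\mathbb{E}\Lambda_K^*)^2}.$$
The second factor is $1+O(\log n/n)$. For the first factor, set $\sigma_I=\sqrt{\operatorname{Var}(I_K)}$. Then
$$\operatorname{Var}(\Lambda_K^*)=\operatorname{Var}(I_K)+O(\sigma_I\log n+\log^2 n).$$
By Theorem~\ref{thm:variance-log-depth}, $\sigma_I\le Cn$, and $\mathbb{E}I_K\ge c_1n$. Dividing by $(\mathbb{E}I_K)^2$, the error term becomes
$$O\Bigl(\frac{n\log n+\log^2 n}{n^2}\Bigr)=O\Bigl(\frac{\log n}{n}\Bigr).$$
So the first factor equals $\tau+O(\log n/n)$, which is the claimed form $\beta=(\tau+O(\tfrac{\log n}{n}))(1+O(\tfrac{\log n}{n}))$.

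Finally, $\tau\le \frac{\pi^2 n^2/6}{(c_1n)^2}\le C_2$, using $\mathbb{E}I_K\ge\mathbb{E}\Lambda_K^*\ge c_1n$. Then $\beta\le C_1$ follows either from the relation above or directly from $\operatorname{Var}(\Lambda_K^*)\le Cn^2$ together with the lower bound on the mean.

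The main obstacle is to make the error term genuinely $O(\log n/n)$ rather than $O(\log n)$. A naive expansion $\operatorname{Var}(I_K)=\operatorname{Var}(\Lambda_K^*)+2\operatorname{Cov}(\Lambda_K^*,D)+\operatorname{Var}(D)$ contains a covariance term. Bounding it by Cauchy–Schwarz gives $O(n\log n)$, and this only becomes small after normalizing by the mean squared, which is of order at least $n^2$. So the argument hinges on combining the $O(n)$ standard deviation bound from Theorem~\ref{thm:variance-log-depth} with the linear lower bound on the mean from Lemma~\ref{lem:mean-variance-star-estimates}. Small $n$ is harmless, since the $O(\cdot)$ constants can absorb it.
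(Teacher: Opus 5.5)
Your argument is correct and follows the paper's route: you treat $\Lambda_K^*$ as an $O(\log n)$ bounded perturbation of $I_K$ via Theorem~\ref{thm:q-lower-bound-star}, control $\operatorname{Var}_{\mu_K}(I_K)$ by Theorem~\ref{thm:variance-log-depth}, and normalize using $\mathbb{E}_{\mu_K}[\Lambda_K^*]\geq c_1n$ from Lemma~\ref{lem:mean-variance-star-estimates}. The only difference is in the cross term. You bound it through subadditivity of standard deviations, giving an error $O(\sigma_I\log n+\log^2 n)$ rather than the paper's $O(\mathbb{E}_{\mu_K}[\Lambda_K^*]\log n+\log^2 n)$. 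This has the small advantage that $\operatorname{Var}_{\mu_K}(\Lambda_K^*)\leq Cn^2$ follows without the upper bound $\mathbb{E}_{\mu_K}[\Lambda_K^*]\leq c_2n\log n$.
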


\begin{proof}
By Theorem~\ref{thm:q-lower-bound-star}, for every $x\in\operatorname{int}(K)$,
$$I_K(x)-C\log n\leq\Lambda_K^*(x)\leq I_K(x).$$
Thus, writing
$$\Lambda_K^*=I_K+h,\qquad \|h\|_{L^\infty(K)}\leq C\log n,$$
we have
$$\operatorname{Var}_{\mu_K}(\Lambda_K^*)=\operatorname{Var}_{\mu_K}(I_K)+O\left(\mathbb{E}_{\mu_K}[\Lambda_K^*]\log n+\log^2 n\right).$$
By Theorem~\ref{thm:variance-log-depth},
$$\operatorname{Var}_{\mu_K}(I_K)\leq Cn^2.$$
Furthermore, the first-moment estimate
from Lemma~\ref{lem:mean-variance-star-estimates} implies that
$$\operatorname{Var}_{\mu_K}(\Lambda_K^*)\leq Cn^2.$$
This proves the first assertion and, in particular, the upper bound for $\beta(\mu_K)$.

Again by Lemma~\ref{lem:mean-variance-star-estimates} and
$$0\leq I_K-\Lambda_K^*\leq C\log n,$$
we obtain
$$\frac{\mathbb{E}_{\mu_K}[I_K]}{\mathbb{E}_{\mu_K}[\Lambda_K^*]}=1+O\left(\frac{\log n}{n}\right).$$
Consequently,
\begin{align*}
\beta(\mu_K) &=\frac{\operatorname{Var}_{\mu_K}(\Lambda_K^*)}{\bigl(\mathbb{E}_{\mu_K}[\Lambda_K^*]\bigr)^2}\\
&=\frac{\operatorname{Var}_{\mu_K}(I_K)+O\left(\mathbb{E}_{\mu_K}[\Lambda_K^*]\log n+\log^2 n\right)}{\bigl(\mathbb{E}_{\mu_K}[\Lambda_K^*]\bigr)^2}\\
&=\left(\frac{\operatorname{Var}_{\mu_K}(I_K)}{\bigl(\mathbb{E}_{\mu_K}[I_K]\bigr)^2}+O\left(\frac{\log n}{n}\right)\right)
\left(\frac{\mathbb{E}_{\mu_K}[I_K]}{\mathbb{E}_{\mu_K}[\Lambda_K^*]}\right)^2\\
&=\left(\tau(\mu_K)+O\left(\frac{\log n}{n}\right)\right)\left(1+O\left(\frac{\log n}{n}\right)\right).
\end{align*}
This proves the claimed equivalence.

Finally, since
$$\operatorname{Var}_{\mu_K}(\Lambda_K^*)\leq Cn^2$$
and
$$\mathbb{E}_{\mu_K}[\Lambda_K^*]\geq c_1n,$$
we obtain
$$\beta(\mu_K)\leq C_1.$$
The upper bound for $\tau(\mu_K)$ follows in the same way from
$$\operatorname{Var}_{\mu_K}(I_K)\leq Cn^2$$
and
$$\mathbb{E}_{\mu_K}[I_K]\geq\mathbb{E}_{\mu_K}[\Lambda_K^*]\geq c_1n.$$
\end{proof}

Before proceeding, we need to introduce the notion of a threshold for random convex hulls. Informally, it can be understood as follows: we have a threshold if, for every fixed $\delta\in(0,1/2)$, the numbers of samples needed for the convex hulls to reach probabilities $\delta$ and $1-\delta$ are asymptotically equivalent. More precisely, given a Borel probability measure $\mu$ on $\mathbb{R}^n$ we set 
$$\varrho_1(\mu,\delta):= \sup\{r>0: \E_{\mu^N}[\mu(K_N)]\leq \delta, \hbox{ for every } N\leq \exp(r)\},$$
and
$$\varrho_2(\mu,\delta):= \inf\{r>0: \E_{\mu^N}[\mu(K_N)]\geq 1-\delta, \hbox{ for every } N\geq \exp(r)\},$$
where $\mu^N=\mu \otimes\cdots\otimes\mu \, $ ($N$-times). Let also \[\varrho(\mu_n,\delta):=\varrho_2(\mu_n,\delta)-\varrho_1(\mu_n,\delta)\] be the threshold window. With this notation, we give the following definition.

\begin{definition}\label{def:sharp-threshold}\rm 
Let $(\mu_n)_{n\in\mathbb{N}}$ be a sequence of probability measures $\mu_n$ on $\mathbb{R}^n$, and $(T_n)_{n \in \mathbb{N}}$ a sequence of positive numbers. 
We say that $(\mu_n)_{n\in\mathbb{N}}$ exhibits a sharp threshold around $(T_n)_{n \in \mathbb{N}}$ if for every $\varepsilon \in (0,1)$, there exists some $n_0=n_0(\varepsilon) \in \mathbb{N}$ such that for every $n \geq n_0$
$$\varrho_1(\mu_n,\delta_n)\geq(1-\varepsilon)T_n \qquad\hbox{ and }\qquad \varrho_2(\mu_n,\delta_n)\leq(1+\varepsilon)T_n,$$
for some sequence $(\delta_n)_{n \in \mathbb{N}}$ of positive numbers with $\lim_{n\to\infty}\delta_n=0$. Equivalently, if \[\frac{\varrho(\mu_n,\delta_n)}{T_n} \to 0,\] as $n \to \infty$.
\end{definition} We now state the threshold criterion that will be used below. It was shown in \cite[Theorem~5.5, Theorem~5.9]{BGP-threshold}.

\begin{theorem}\label{thm:rho-threshold}
Let $K$ be a convex body in $\mathbb{R}^n$ and $\delta>0$. Assume that
$$8\beta(\mu_K)<\delta<1.$$
If
$$n\geq c\log\left(\frac{2}{\delta}\right)\sqrt{\frac{\delta}{\beta(\mu_K)}},$$
then
$$\varrho(\mu_K,\delta)\leq C\sqrt{\frac{\beta(\mu_K)}{\delta}}\,\mathbb{E}_{\mu_K}[\Lambda_K^*],$$
where $c,C>0$ are absolute constants.
\end{theorem}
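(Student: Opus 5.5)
We would prove the bound on $\varrho(\mu_K,\delta)$ with a first/second moment argument around $T:=\mathbb{E}_{\mu_K}[\Lambda_K^*]$, using Chebyshev's inequality with the parameter $\beta(\mu_K)$ and pointwise bounds on $\mathbb{P}(x\in K_N)$ in terms of $\Lambda_K^*(x)$. Write
\[
\mathbb{E}_{\mu^N}[\mu_K(K_N)]=\int_K\mathbb{P}(x\in K_N)\,d\mu_K(x).
\]
Set $\varepsilon=A\sqrt{\beta(\mu_K)/\delta}$ for a suitable absolute constant $A$. Then Chebyshev gives
\[
\mu_K\bigl(|\Lambda_K^*-T|\geq\varepsilon T\bigr)\leq \frac{\beta(\mu_K)}{\varepsilon^2}\leq\frac{\delta}{4}.
\]
The assumption $8\beta(\mu_K)<\delta$ gives $\varepsilon<1$ once $A$ is fixed. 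It then suffices to show $\varrho_1\geq(1-2\varepsilon)T$ and $\varrho_2\leq(1+2\varepsilon)T$, which yields a window of order $\varepsilon T$.

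\emph{Lower threshold.} If $x\in K_N$, then every closed half-space containing $x$ contains some $X_i$. Applying this to a half-space of (near-)minimal measure and using a union bound gives
\[
\mathbb{P}(x\in K_N)\leq Nq_K(x)\leq Ne^{-\Lambda_K^*(x)}.
\]
For $N\leq e^{(1-2\varepsilon)T}$, split $K$ according to whether $\Lambda_K^*\leq(1-\varepsilon)T$. This yields
\[
\mathbb{E}\,\mu_K(K_N)\leq \frac{\delta}{4}+e^{-\varepsilon T}.
\]
By Lemma~\ref{lem:mean-variance-star-estimates}, $T\geq c_1n$. The hypothesis $n\geq c\log(2/\delta)\sqrt{\delta/\beta(\mu_K)}$ then says precisely that $\varepsilon T\gtrsim\log(2/\delta)$, so $e^{-\varepsilon T}\leq\delta/2$ for an appropriate $c$.

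\emph{Upper threshold.} Here we need a lower bound for $\mathbb{P}(x\in K_N)$. By Theorem~\ref{thm:HLO-threshold}, blocks of $m=\lceil(3n+1)/q_K(x)\rceil$ points capture $x$ with probability at least $1/2$. Independence across disjoint blocks then gives
\[
\mathbb{P}(x\notin K_N)\leq 2^{-\lfloor N/m\rfloor}.
\]
Next, Theorem~\ref{thm:q-lower-bound-star} gives $1/q_K(x)\leq C\sqrt n\,e^{\Lambda_K^*(x)}$. So for $\Lambda_K^*(x)\leq(1+\varepsilon)T$ and $N\geq e^{(1+2\varepsilon)T}$,
\[
\frac{N}{m}\gtrsim n^{-3/2}e^{\varepsilon T}.
\]
Integrating over $x$ and using Chebyshev on the complementary set, we obtain $\mathbb{E}\,\mu_K(K_N)\geq1-\delta$, provided $2^{-cn^{-3/2}e^{\varepsilon T}}\leq\delta/2$.

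\emph{Main obstacle.} The difficulty is this last condition. It requires
\[
\varepsilon T\geq \tfrac32\log n+\log\log(2/\delta)+C,
\]
whereas the hypothesis only supplies $\varepsilon T\gtrsim\log(2/\delta)$. The gap is the polynomial factor $n^{3/2}$ in the sampling comparison, which costs an additive $O(\log n)$ in the exponent. My first attempt would be to absorb it into the window: the statement is allowed an absolute constant $C$ in front of $\sqrt{\beta/\delta}\,T$. If $\varepsilon$ is bounded below by a constant multiple of $\log n/n$, then $\varepsilon T\gtrsim \log n$ and the argument closes. For smaller $\beta$ this fails, so I would instead look for a sharper capture estimate for uniform measures on convex bodies that avoids the factor $n^{3/2}$. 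The candidate is a Wendel-type bound $\mathbb{P}(x\notin K_N)\lesssim (1-q_K(x))^{N-n}\binom{N}{n}$-style controlled via $\Lambda_K^*$, as developed in \cite{BGP-threshold}. I expect this is how \cite[Theorem~5.9]{BGP-threshold} handles this regime.
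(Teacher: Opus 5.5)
The paper does not prove this statement; it imports it from \cite[Theorems~5.5, 5.9]{BGP-threshold}. Your lower-threshold half is correct and is the standard argument behind the cited lower bound. It uses $\mathbb{P}(x\in K_N)\le Nq_K(x)\le Ne^{-\Lambda_K^*(x)}$ and Chebyshev with $\varepsilon\asymp\sqrt{\beta/\delta}$. The hypothesis $n\ge c\log(2/\delta)\sqrt{\delta/\beta}$, combined with $\mathbb{E}_{\mu_K}[\Lambda_K^*]\ge c_1n$, is exactly what makes $e^{-\varepsilon T}\le\delta/2$.

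The upper-threshold half has a genuine gap, namely the one you flagged. Theorem~\ref{thm:HLO-threshold} with Theorem~\ref{thm:q-lower-bound-star} gives $\mathbb{E}\,\mu_K(K_N)\ge1-\delta$ only once
$$\log N\ge(1+\varepsilon)T+\tfrac32\log n+\log\log(2/\delta)+C.$$
So $\varrho_2\le(1+C\varepsilon)T$ requires $\varepsilon T\gtrsim\log n$, which is equivalent to $\operatorname{Var}_{\mu_K}(\Lambda_K^*)\gtrsim\delta\log^2 n$. Nothing you invoke yields this. The hypothesis gives only $\varepsilon T\gtrsim\log(2/\delta)$. The bound $\beta(\mu_K)\ge c_3/n^2$ of Lemma~\ref{lem:mean-variance-star-estimates} gives only $\varepsilon T\gtrsim\delta^{-1/2}$.

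Your proposed remedy does not help. A Dyer--F\"uredi--McDiarmid/Wendel-type bound $\mathbb{P}(x\notin K_N)\le 2\binom{N}{n}(1-q_K(x))^{N-n}$ forces $Nq_K(x)\gtrsim n\log N$. That is an additive loss of $\log n+\log\log N\ge 2\log n-O(1)$ in $\log N$, since $\log N\approx T\ge c_1n$. This is worse than your HLO block argument.

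Nor can either pointwise loss be sharpened in general. The $\sqrt n$ in Theorem~\ref{thm:q-lower-bound-star} is attained by the ball. By Wendel's formula, $N_\mu(0)q_\mu(0)=n$ at the centre of any symmetric body. So the statement as written needs an ingredient you do not supply: either a variance lower bound of order $\delta\log^2 n$ in the relevant range, or a capture estimate that gains from averaging over $x$.

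What your argument does establish rigorously is the weaker bound
$$\varrho(\mu_K,\delta)\le C\sqrt{\beta(\mu_K)/\delta}\;\mathbb{E}_{\mu_K}[\Lambda_K^*]+\tfrac32\log n+\log\log(2/\delta)+C.$$
This coincides with the statement whenever $\sqrt{\beta/\delta}\,\mathbb{E}_{\mu_K}[\Lambda_K^*]\gtrsim\log n$. It already suffices for the paper's application, Theorem~\ref{thm:sharp-threshold}, because there $m_n\gg n$ and $\delta_n=n/m_n\ge c/\log n$.
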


For uniform measures on convex bodies, Proposition~\ref{prop:equivalence-q-star} gives
$$\operatorname{Var}_{\mu_K}(\Lambda_K^*)\leq Cn^2.$$
Thus the fluctuations of $\Lambda_K^*$ are at most of order $n$. If, on the other hand, its mean is much larger than $n$, then the
Cram\'{e}r transform is concentrated on a logarithmic scale far away from the origin.

This leads to a general criterion for a sharp threshold.

\begin{theorem}\label{thm:sharp-threshold}
Let $(K_n)_{n\in\mathbb{N}}$ be a sequence of convex bodies $K_n\subseteq\mathbb{R}^n$, and let $\mu_{K_n}$ denote the corresponding
uniform probability measures. Assume that
$$\lim_{n\to\infty}\frac{1}{n}\mathbb{E}_{\mu_{K_n}}\left[\Lambda_{K_n}^*\right]=+\infty.$$
Then the sequence $(\mu_{K_n})_{n\in\mathbb{N}}$ exhibits a sharp threshold around $\bigl( \mathbb{E}_{\mu_{K_n}} [\Lambda^\ast_{K_n}]\bigr)_{n \in \mathbb{N}}$.
\end{theorem}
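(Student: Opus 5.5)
The plan is to deduce the statement from Theorem~\ref{thm:rho-threshold}, applied to each $K_n$ with a suitably chosen vanishing sequence $\delta_n$. Write $E_n=\mathbb{E}_{\mu_{K_n}}[\Lambda_{K_n}^*]$ and $\beta_n=\beta(\mu_{K_n})$. By Proposition~\ref{prop:equivalence-q-star}, $\operatorname{Var}_{\mu_{K_n}}(\Lambda_{K_n}^*)\leq Cn^2$. Hence
$$\beta_n\leq \frac{Cn^2}{E_n^2}\longrightarrow 0,$$
because $E_n/n\to\infty$ by assumption.

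Next I will choose $\delta_n=\sqrt{\beta_n}$, or $\delta_n=\max\{\sqrt{\beta_n},1/n\}$ if $\beta_n$ might vanish too fast for the size condition; the lower bound $\beta_n\geq c_3/n^2$ of Lemma~\ref{lem:mean-variance-star-estimates} shows $\sqrt{\beta_n}\gtrsim 1/n$ anyway. Then $\delta_n\to0$, and for large $n$ we have $8\beta_n=8\delta_n^2<\delta_n<1$. The remaining hypothesis of Theorem~\ref{thm:rho-threshold} is
$$n\geq c\log(2/\delta_n)\sqrt{\delta_n/\beta_n}=c\log(2/\delta_n)\,\beta_n^{-1/4}.$$
Since $\beta_n\geq c_3/n^2$, we get $\beta_n^{-1/4}\lesssim\sqrt n$ and $\log(2/\delta_n)\lesssim\log n$. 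The right-hand side is therefore $O(\sqrt n\log n)$, which is at most $n$ for large $n$. This verification of the size hypothesis is the step I expect to be the main obstacle, and it is exactly why the lower bound on $\beta$ from Lemma~\ref{lem:mean-variance-star-estimates} is needed.

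Theorem~\ref{thm:rho-threshold} then gives
$$\frac{\varrho(\mu_{K_n},\delta_n)}{E_n}\leq C\sqrt{\beta_n/\delta_n}=C\beta_n^{1/4}\longrightarrow0.$$
To reach the two-sided form in Definition~\ref{def:sharp-threshold}, I also need $\varrho_1\leq E_n\leq\varrho_2$ up to $o(E_n)$, so that the window actually sits around $E_n$. I expect this to come from the proof of Theorem~\ref{thm:rho-threshold} in \cite{BGP-threshold}, where $\varrho_1$ and $\varrho_2$ are each compared with $E_n$ at the same error scale, $\varrho_1\geq(1-C\sqrt{\beta/\delta})E_n$ and $\varrho_2\leq(1+C\sqrt{\beta/\delta})E_n$. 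If only the window bound were available, I would argue directly instead. By Chebyshev, $\Lambda^*$ lies in $[(1-\varepsilon)E_n,(1+\varepsilon)E_n]$ with probability at least $1-\beta_n/\varepsilon^2$. For $N\leq e^{(1-\varepsilon)E_n}$, the bound $\mathbb{E}\mu(K_N)\leq \mu(\{\Lambda^*\leq (1-\varepsilon/2)E_n\})+N\sup_{\Lambda^*>(1-\varepsilon/2)E_n} q$ gives the lower threshold, using $q\leq e^{-\Lambda^*}$. For the upper threshold I would use Theorem~\ref{thm:HLO-threshold} together with Theorem~\ref{thm:q-lower-bound-star}, noting that polynomial factors in $n$ are $e^{o(E_n)}$ because $E_n\gg n$.
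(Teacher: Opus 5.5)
Your proposal is correct and is essentially the paper's proof. You apply Theorem~\ref{thm:rho-threshold} with a vanishing $\delta_n$, using $\operatorname{Var}_{\mu_{K_n}}(\Lambda_{K_n}^*)\leq Cn^2$ to make $\beta(\mu_{K_n})/\delta_n$ small and $\beta(\mu_{K_n})\geq c_3/n^2$ to verify the dimension hypothesis. The only difference is that the paper takes $\delta_n=n/\mathbb{E}_{\mu_{K_n}}[\Lambda_{K_n}^*]$ rather than your $\delta_n=\sqrt{\beta(\mu_{K_n})}$, and both choices give the same window $O\bigl(\sqrt{n/\mathbb{E}_{\mu_{K_n}}[\Lambda_{K_n}^*]}\bigr)\,\mathbb{E}_{\mu_{K_n}}[\Lambda_{K_n}^*]$.

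Your concern about centering the window at the mean is fair. The paper passes over it through the ``equivalently'' in Definition~\ref{def:sharp-threshold}; the gap is closed by the separate one-sided bounds $\varrho_1\geq(1-\epsilon)\mathbb{E}[\Lambda^*]$ and $\varrho_2\leq(1+\epsilon)\mathbb{E}[\Lambda^*]$, with $\epsilon\asymp\sqrt{\beta/\delta}$, that underlie Theorem~\ref{thm:rho-threshold}. Your direct fallback also works, but the upper threshold needs a block-splitting amplification step, since Theorem~\ref{thm:HLO-threshold} alone only gives $\mathbb{P}(x\in K_N)\geq 1/2$.
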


\begin{proof}
Set
$$m_n=\mathbb{E}_{\mu_{K_n}}[\Lambda_{K_n}^*]$$
and choose
$$\delta_n=\frac{n}{m_n}.$$
By assumption,
$$\delta_n\longrightarrow0.$$
Observe that by Proposition~\ref{prop:equivalence-q-star} \[\frac{\beta(\mu_{K_n})}{\delta_n}=\frac{\Var_{\mu_{K_n}}(\Lambda^\ast_{\mu_{K_n}})}{m_n^2\,\delta_n} \leq C\frac{n^2}{m_n^2\,\delta_n}=C\delta_n.\] Moreover, by Lemma~\ref{lem:mean-variance-star-estimates}\[\beta(\mu_K) \geq \frac{c_1}{n^2}\]Therefore, if $c>0$ is the absolute constant of Theorem~\ref{thm:rho-threshold}, then \[c\log \left(\frac{2}{\delta_n}\right)\sqrt{\frac{\delta_n}{\beta(\mu_{K_n)}}} \leq c_2 \log\left(\frac{2}{\delta_n}\right)\sqrt{\delta_n}\,n \leq n, \] for all large $n$, since $\delta_n \to 0$, as $n \to +\infty$. 
Thus, using Theorem~\ref{thm:rho-threshold} \[\frac{\rho(\mu_n,\delta_n)}{m_n}\leq C_1 \sqrt{\delta_n} \to 0,\] as $n \to \infty$.
\end{proof}

Because of Proposition~\ref{prop:equivalence-q-star}, the preceding result can equivalently be expressed in terms of Tukey's half-space depth.

\begin{corollary}\label{cor:sharp-threshold-tukey}
Let $(K_n)_{n\in\mathbb{N}}$ be a sequence of convex bodies $K_n\subseteq\mathbb{R}^n$, and let $\mu_{K_n}$ denote the corresponding
uniform probability measures. Assume that
$$\lim_{n\to\infty}\frac{1}{n}\mathbb{E}_{\mu_{K_n}}\left[I_{K_n}\right]=+\infty.$$
Then the sequence $(\mu_{K_n})_{n\in\mathbb{N}}$ exhibits a sharp threshold around $\bigl( \mathbb{E}_{\mu_{K_n}} [\Lambda^\ast_{K_n}]\bigr)_{n \in \mathbb{N}}$.
\end{corollary}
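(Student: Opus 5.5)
The plan is to reduce Corollary~\ref{cor:sharp-threshold-tukey} to Theorem~\ref{thm:sharp-threshold}. To do this, we show that the hypothesis on $\mathbb{E}_{\mu_{K_n}}[I_{K_n}]$ implies the hypothesis on $\mathbb{E}_{\mu_{K_n}}[\Lambda_{K_n}^*]$. The conclusion is then literally the conclusion of Theorem~\ref{thm:sharp-threshold}, with the same centering sequence $\bigl(\mathbb{E}_{\mu_{K_n}}[\Lambda^*_{K_n}]\bigr)_n$.

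\emph{The two means differ by $O(\log n)$.} By Theorem~\ref{thm:q-lower-bound-star} together with the elementary bound $q_K\leq e^{-\Lambda_K^*}$, we have, pointwise on $\operatorname{int}(K_n)$,
$$0\leq I_{K_n}-\Lambda_{K_n}^*\leq \tfrac12\log n+C.$$
Both functions are integrable: by Proposition~\ref{prop:convex-body-moments} and Lemma~\ref{lem:tukey-finite-moments}, and since $\partial K_n$ is $\mu_{K_n}$-null. Integrating against $\mu_{K_n}$ therefore gives
$$\mathbb{E}_{\mu_{K_n}}[\Lambda_{K_n}^*]\geq \mathbb{E}_{\mu_{K_n}}[I_{K_n}]-\tfrac12\log n-C.$$

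\emph{Transfer of the hypothesis.} Dividing by $n$,
$$\frac1n\mathbb{E}_{\mu_{K_n}}[\Lambda_{K_n}^*]\geq \frac1n\mathbb{E}_{\mu_{K_n}}[I_{K_n}]-\frac{\log n+2C}{2n}.$$
The last term tends to $0$, so the right-hand side tends to $+\infty$. This is exactly the hypothesis of Theorem~\ref{thm:sharp-threshold}, which now yields the sharp threshold around $\mathbb{E}_{\mu_{K_n}}[\Lambda^*_{K_n}]$.

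\emph{Optional remark.} Since the two means are asymptotically equivalent, the sharp threshold could equally be centered at $\mathbb{E}_{\mu_{K_n}}[I_{K_n}]$. This requires no work: the means differ by $O(\log n)=o(n)=o(\mathbb{E}_{\mu_{K_n}}[\Lambda^*_{K_n}])$, and changing $T_n$ by a factor $1+o(1)$ does not affect Definition~\ref{def:sharp-threshold}.

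There is no real obstacle. The only care needed is to use the pointwise comparison with the additive constant in the correct direction: the lower bound on $q$ is what yields the lower bound on $\Lambda^*$. The integrability remark justifies taking expectations.
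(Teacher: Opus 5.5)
Your proposal is correct and follows essentially the paper's route. The paper obtains the corollary from Theorem~\ref{thm:sharp-threshold} via the pointwise comparison $0\le I_{K}-\Lambda_{K}^*\le C\log n$ coming from Theorem~\ref{thm:q-lower-bound-star}, packaged in Proposition~\ref{prop:equivalence-q-star}; this is exactly the $O(\log n)$ comparison of means you use to transfer the hypothesis.
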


We now verify the required growth of the mean for the $\ell_p$ balls.

\begin{proposition}\label{prop:p-ball-depth-mean}
Let $p>1$. Then
$$\mathbb{E}_{\mu_{B_p^n}}[I_{B_p^n}]\geq C_p n\log n-c_p n, $$
where $C_p,c_p>0$ depend only on $p$.
\end{proposition}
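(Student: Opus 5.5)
We bound $q_{B_p^n}(x)$ from above by the normalized volume of one explicit cap through $x$, and then integrate in the radial variable $r=\|x\|_p$. Write $K=B_p^n$. For $x\in\operatorname{int}(K)$ with $r=\|x\|_p\in(0,1)$, set $y=x/r\in\partial K$. Let $f$ be a norming functional of $y$, i.e. $f\in (\ell_p^n)^*$ with $\|f\|_*=1$ and $f(y)=1$; for $p>1$ it is unique and given by $f_i=\operatorname{sign}(y_i)|y_i|^{p-1}$. Then $f(x)=r$, and the half-space $\{z: f(z)\geq r\}$ contains $x$. Therefore
\[
q_K(x)\leq \mu_K\bigl(\{z\in K: f(z)\geq 1-h\}\bigr),\qquad h=1-r.
\]

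The key step is that this cap is small in $\ell_p$-diameter. We bound it using the modulus of convexity $\delta_p$ of $\ell_p^n$. Let $z,w$ lie in the cap. Then $f((z+w)/2)\geq 1-h$, so $\|(z+w)/2\|_p\geq 1-h$, while $\|z\|_p,\|w\|_p\leq 1$. Hence $\delta_p(\|z-w\|_p)\leq h$.
\begin{itemize}
\item For $p\geq 2$, Clarkson's inequality $\|\tfrac{z+w}{2}\|_p^p+\|\tfrac{z-w}{2}\|_p^p\leq 1$ gives $\delta_p(\varepsilon)\geq (\varepsilon/2)^p/p$. So the diameter is at most $D=C_p h^{1/p}$.
\item For $1<p\leq 2$, the sharp uniform convexity of $\ell_p$ (Hanner's inequality, or the Ball--Carlen--Lieb 2-uniform convexity with constant $p-1$) gives $\delta_p(\varepsilon)\geq c_p\varepsilon^2$. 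So $D=C_p\sqrt h$.
\end{itemize}
A set of $\ell_p$-diameter $D$ lies in $z_0+D\,B_p^n$ for any of its points $z_0$. Hence the cap has volume at most $D^n\vol_n(B_p^n)$, which gives
\[
I_K(x)=-\log q_K(x)\geq n\,\gamma_p\log\frac{1}{1-\|x\|_p}-n\log C_p,\qquad \gamma_p=\min\Bigl\{\tfrac12,\tfrac1p\Bigr\}.
\]
This step needs no Euclidean comparison and no volume estimates for $B_2^{n-1}$, because the cap is compared directly to a homothet of $K$.

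It remains to integrate. Under $\mu_K$, the variable $R=\|X\|_p$ has density $nr^{n-1}$ on $(0,1)$, and
\[
\mathbb{E}\log\frac{1}{1-R}=\sum_{k=1}^n\frac1k\geq\log n .
\]
This is the same harmonic-number computation as in the polar integration of Proposition~\ref{prop:convex-body-moments}. We therefore get $\mathbb{E}_{\mu_K}[I_K]\geq \gamma_p n\log n - n\log C_p$, which is the claim with $C_p=\gamma_p$ and $c_p=\log C_p$ (taking $C_p\geq 1$ if necessary, so that $c_p\geq 0$). The sharp threshold for $\ell_p$-balls then follows from Corollary~\ref{cor:sharp-threshold-tukey}, since $\frac1n\mathbb{E}[I_{B_p^n}]\to\infty$.

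The main obstacle is the uniform convexity estimate for $1<p<2$ with a constant depending only on $p$ and not on $n$. Clarkson's inequality for $p<2$ only gives $\delta_p(\varepsilon)\gtrsim\varepsilon^{q}$ with $q=p/(p-1)$. That would still suffice, producing the factor $\gamma_p=1/q$ in place of $1/2$. So I would first try Hanner's inequality (giving $\varepsilon^2$), and fall back on Clarkson's dual inequality if needed; the $n\log n$ order is unaffected either way. A minor point to check is the measurability and uniqueness of the norming functional on the null set where some coordinates of $x$ vanish. This does not matter, since any norming functional works.
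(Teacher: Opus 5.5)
Your proposal is correct and follows the paper's proof essentially step by step. You use the cap cut out by the norming functional $u_z$, then an $\ell_p$-diameter bound of order $\sqrt h$ for $1<p\le 2$ (Ball--Carlen--Lieb) and of order $h^{1/p}$ for $p>2$ (Clarkson), then containment of the cap in a homothet $z+D\,h^{\alpha_p}B_p^n$, and finally polar integration yielding $H_n$. The only difference is cosmetic: you bound the diameter of the whole cap directly through the modulus of convexity for points in the ball, which avoids the paper's normalization $\widetilde y=y/\|y\|_p$ in the case $1<p\le 2$.
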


\begin{proof}
Let $z\in\partial B_p^n$ and write
$$x=rz,\qquad r\in[0,1].$$
Set
$$h=1-r.$$
Consider the linear functional
$$u_z(y)=\sum_{i=1}^n\operatorname{sgn}(z_i)|z_i|^{p-1}y_i.$$
If $q$ is the conjugate exponent of $p$, then
$$\|u_z\|_q=1,\qquad u_z(z)=1.$$
Define the cap
$$C(z,h)=\left\{y\in B_p^n:u_z(y)\geq1-h\right\}.$$
Since
$$u_z(x)=r=1-h,$$
we have $x\in C(z,h)$ and therefore
$$q_{B_p^n}(rz)\leq\frac{\operatorname{vol}_n(C(z,h))}{\operatorname{vol}_n(B_p^n)}.$$

Let $y\in C(z,h)$. Then
\begin{equation}\label{eq:definition-cap-inequality}
u_z(y)\geq1-h.
\end{equation}

We distinguish two cases.

\medskip

\noindent\textit{Case 1: $1<p\leq2$.}

Set
$$t=\|y\|_p, \qquad \widetilde y=\frac{y}{t}.$$
Since
$$1-h\leq u_z(y)\leq\|y\|_p=t\leq1,$$
we have
$$1-t\leq h.$$
Moreover,
$$u_z(\widetilde y)=\frac{u_z(y)}{t}\geq u_z(y)\geq1-h.$$
Consequently,
\begin{equation}\label{eq:mean-p-norm}
\left\|\frac{z+\widetilde y}{2}\right\|_p\geq\frac{u_z(z+\widetilde y)}{2}\geq 1-\frac h2.
\end{equation}

Applying the uniform convexity inequality from \cite[Proposition~3]{Ball-Carlen-Lieb} to $(z+\tilde{y})/2$ and $(z-\tilde{y})/2$ gives
$$1\geq\left\|\frac{z+\widetilde y}{2}\right\|_p^2+\frac{p-1}{4}\|z-\widetilde y\|_p^2.$$
Together with \eqref{eq:mean-p-norm}, this yields
$$\|z-\widetilde y\|_p\leq\frac{2}{\sqrt{p-1}}\sqrt h.$$
Hence
\begin{align*}
\|y-z\|_p &\leq\|y-\widetilde y\|_p+\|\widetilde y-z\|_p\\
&=1-t+\|\widetilde y-z\|_p\\
&\leq\left(1+\frac{2}{\sqrt{p-1}}\right)\sqrt h,
\end{align*}
where we used $1-t\leq h\leq\sqrt h$. Thus,
$$C(z,h)\subseteq z+D_p\sqrt h\,B_p^n$$
for a constant $D_p>0$ depending only on $p$. Consequently,
$$q_{B_p^n}(rz)\leq D_p^n(1-r)^{n/2}.$$

\medskip
\noindent\textit{Case 2: $p>2$.}

Clarkson's inequality applied to $y$ and $z$ gives
$$\left\|\frac{y-z}{2}\right\|_p^p\leq 1-\left(\frac{u_z(y)+u_z(z)}{2}\right)^p.$$
Using \eqref{eq:definition-cap-inequality} and $u_z(z)=1$, we obtain
$$\left\|\frac{y-z}{2}\right\|_p^p\leq 1-\left(1-\frac h2\right)^p\leq\frac{ph}{2}.$$
Therefore,
$$\|y-z\|_p\leq C_p h^{1/p},$$
and hence
$$C(z,h)\subseteq z+C_p h^{1/p}B_p^n.$$
It follows that
$$q_{B_p^n}(rz)\leq C_p^n(1-r)^{n/p}.$$

In both cases, we have an estimate of the form
$$q_{B_p^n}(rz)\leq A_p^n(1-r)^{\alpha_p n},$$
where
$$\alpha_p=
\begin{cases}
1/2,&1<p\leq2,\\[2mm]
1/p,&p>2.
\end{cases}
$$
Therefore,
$$I_{B_p^n}(rz)= -\log q_{B_p^n}(rz)\geq\alpha_p n\log\frac1{1-r} -n\log A_p.$$
Using polar integration with respect to the Minkowski functional \[\|\cdot\|_{B_p^n}=\|\cdot\|_p,\] we obtain
\begin{align*}
\mathbb{E}_{\mu_{B_p^n}}[I_{B_p^n}]
&\geq \alpha_p n\int_0^1nr^{n-1}\log\frac1{1-r}\,dr -n\log A_p.
\end{align*}
The integral is comparable to $\log n$; more precisely,
$$\int_0^1nr^{n-1}\log\frac1{1-r}\,dr=H_n.$$
Hence
$$\mathbb{E}_{\mu_{B_p^n}}[I_{B_p^n}]\geq\alpha_p nH_n-n\log A_p\geq C_p n\log n-c_p n.$$
This proves the proposition.
\end{proof}

Combining Proposition~\ref{prop:p-ball-depth-mean} with Corollary~\ref{cor:sharp-threshold-tukey}, we obtain the desired sharp
threshold for random convex hulls generated by the uniform measure on $\ell_p$ balls.

\begin{corollary}\label{cor:sharp-threshold-p-balls}
For every $p>1$, the sequence of uniform probability measures on $B_p^n$ exhibits a sharp threshold around $\bigl( \mathbb{E}_{\mu_{B_p^n}} [\Lambda^\ast_{B_p^n}]\bigr)_{n \in \mathbb{N}}$.
\end{corollary}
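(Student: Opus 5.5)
The statement follows by combining Proposition~\ref{prop:p-ball-depth-mean} with Corollary~\ref{cor:sharp-threshold-tukey}. Fix $p>1$ and set $K_n=B_p^n$. The only hypothesis to check in Corollary~\ref{cor:sharp-threshold-tukey} is
$$\frac1n\mathbb{E}_{\mu_{B_p^n}}[I_{B_p^n}]\longrightarrow+\infty .$$
Proposition~\ref{prop:p-ball-depth-mean} gives
$$\frac1n\mathbb{E}_{\mu_{B_p^n}}[I_{B_p^n}]\geq C_p\log n-c_p .$$
Since $C_p,c_p$ depend only on $p$ and not on $n$, the right-hand side tends to infinity. Corollary~\ref{cor:sharp-threshold-tukey} then yields a sharp threshold around $\bigl(\mathbb{E}_{\mu_{B_p^n}}[\Lambda^*_{B_p^n}]\bigr)_{n\in\mathbb{N}}$.

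For completeness, I would also spell out why Corollary~\ref{cor:sharp-threshold-tukey} reduces to Theorem~\ref{thm:sharp-threshold}. Theorem~\ref{thm:q-lower-bound-star} gives the pointwise bound $0\leq I_K-\Lambda_K^*\leq \tfrac12\log n+C$ on $\operatorname{int}(K)$. Integrating against $\mu_K$,
$$\frac1n\mathbb{E}_{\mu_{K_n}}[\Lambda^*_{K_n}]\geq \frac1n\mathbb{E}_{\mu_{K_n}}[I_{K_n}]-\frac{\log n+2C}{2n},$$
and the error term tends to $0$. So the growth hypothesis on $I_{K_n}$ transfers to $\Lambda^*_{K_n}$, and Theorem~\ref{thm:sharp-threshold} applies with $T_n=\mathbb{E}_{\mu_{K_n}}[\Lambda^*_{K_n}]$.

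There is no real obstacle at this stage; all the work sits in Proposition~\ref{prop:p-ball-depth-mean}, namely the cap-diameter estimates. The one point to check is that the constants $C_p$ and $c_p$ there really depend only on $p$. In Case 1 they come from the Ball--Carlen--Lieb constant $p-1$, and in Case 2 from Clarkson's inequality; in both cases $A_p$ is a fixed constant. This uniformity in $n$ is what makes $C_p\log n-c_p$ diverge.
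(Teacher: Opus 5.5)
Your proposal is correct and is exactly the paper's argument: the corollary follows by feeding the bound $\frac1n\mathbb{E}_{\mu_{B_p^n}}[I_{B_p^n}]\geq C_p\log n-c_p\to\infty$ from Proposition~\ref{prop:p-ball-depth-mean} into Corollary~\ref{cor:sharp-threshold-tukey}. Your extra justification of that corollary is also what the paper does (via Proposition~\ref{prop:equivalence-q-star}): you transfer the growth hypothesis from $I_{K_n}$ to $\Lambda^*_{K_n}$ using the pointwise bound $0\leq I_K-\Lambda_K^*\leq\frac12\log n+C$.
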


\bigskip

\noindent {\bf Acknowledgements.} I would like to thank S.~Brazitikos and A.~Giannopoulos for helpful discussions.

\bigskip 

\medskip

\thanks{\noindent {\bf Keywords:} log-concave probability measures, half-space depth, Cram\'{e}r transform, random polytopes, convex bodies.}

\smallskip

\thanks{\noindent {\bf 2020 MSC:} Primary 60D05; Secondary 60E15, 62H05, 52A22, 52A23.}

\bigskip

\bigskip 

\bigskip

\noindent \textsc{Minas \ Pafis}: Department of Mathematics, National and Kapodistrian University of Athens, Panepistimioupolis 157-84,
Athens, Greece.

\smallskip

\noindent \textit{E-mail:} \texttt{mipafis@math.uoa.gr}

\end{document}